\newtheorem{theorem}{Theorem}[section]
\newtheorem{lemma}[theorem]{Lemma}
\newtheorem{prop}[theorem]{Proposition}
\newtheorem{corollary}[theorem]{Corollary}
\theoremstyle{definition}
\newtheorem{definition}[theorem]{Definition}
\theoremstyle{remark} 
\newtheorem{remark}[theorem]{Remark}
\numberwithin{equation}{section}
\begin{document}

\title[fourth order cubic nonlinear Schr\"odinger equation]
{Low regularity a priori estimates for the fourth order cubic nonlinear Schr\"odinger equation}
\author{Kihoon seong }
\address{Department of Mathematical Sciences, Korea Advanced Institute of Science
	and Technology, 291 Daehak-ro, Yuseong-gu, Daejeon 34141, Korea}
\email{hun1022kr@kaist.ac.kr}  
\keywords{fourth order nonlinear Schr\"odinger equation, low regularity solutions, frequency dependent time scale, $U^p$ and $V^p$ spaces, normal form reduction.}
\subjclass[2010]{35Q55.}

\begin{abstract}
We consider the low regularity behavior of the fourth order cubic nonlinear Schr\"odinger equation (4NLS)	
\begin{align*}
\begin{cases}
i\partial_tu+\partial_x^4u=\pm \vert u \vert^2u, \quad(t,x)\in  \mathbb{R}\times \mathbb{R}\\
u(x,0)=u_0(x)\in H^s\left(\mathbb{R}\right).
\end{cases} 
\end{align*}
In \cite{Seong2018}, the author showed that this equation is globally well-posed in $H^s, s\geq -\frac{1}{2}$ and ill-posedness in the sense that the solution map fails to be uniformly continuous for $-\frac{15}{14}<s<-\frac{1}{2}$. Therefore, $s=-\frac{1}{2}$ is the lowest regularity that can be handled by the contraction argument. In spite of this ill-posedness result, we obtain a priori bound below $s<-1/2$. This a priori estimate guarantees the existence of a weak solution for $-3/4<s<-1/2$. But we cannot establish full well-posedness because of the lack of energy estimate of differences of solutions. Our method is inspired by Koch-Tataru \cite{KT2007}. We use the $U^p$ and $V^p$ based spaces adapted to frequency dependent time intervals on which the nonlinear evolution can be still described by linear dynamics.
\end{abstract}

\maketitle


\section{Introduction}

In this paper, we study the Cauchy problem for the fourth order cubic nonlinear Schr\"odinger equation on $\mathbb{R}$:
\begin{align}\label{eqn:fourth order NLS}\tag{4NLS}
\begin{cases}
i\partial_tu+\partial_x^4u=\mu \vert u \vert^2u, \quad(t,x)\in  \mathbb{R}\times \mathbb{R}\\
u(x,0)=u_0(x)\in H^s\left(\mathbb{R}\right), 
\end{cases} 
\end{align}
where $u$ is a complex-valued function and $\mu=\pm 1$. This equation is called defocusing when the sign of nonlinear term is negative and focusing when the sign is positive. This equation is also known as the biharmonic NLS. The $\eqref{eqn:fourth order NLS}$ has been studied in the context of stability of solitons in magnetic materials. For more physical background see \cite{Karpman1996},\cite{KS2000}.

This equation is also a Hamiltonian PDE with the following Hamiltonian: 
\begin{align}\label{eqn:Hamiltonian}
H(u(t))=\frac{1}{2}\int_{\mathbb{R}}\vert \partial_x^2 u(t) \vert^2 \,dx\pm \frac{1}{4} \int_{\mathbb{R}}\vert u(t) \vert^4\, dx.
\end{align} 
Moreover, the mass $M(u(t))$ is defined by 
\begin{align}\label{eqn:mass}
M(u(t))=\int_{\mathbb{R}}\vert u(t) \vert^2\,dx.
\end{align}
This Hamiltonian $\eqref{eqn:Hamiltonian}$ and mass $\eqref{eqn:mass}$ are conserved under the $\eqref{eqn:fourth order NLS}$ flow.

 The $\eqref{eqn:fourth order NLS}$ is invariant with respect to the scaling 
\begin{align}\label{eqn: scaling symmetry}
u(t,x) \to \lambda^2u(\lambda^4t,\lambda x)
\end{align}
Therefore, the scale invariant homogeneous space is $\dot{H}^{-\frac{3}{2}}$.
 In general, we have
\begin{align}
\Vert u_{0,\lambda} \Vert_{\dot{H}^s\left(\mathbb{R}\right) }=\lambda^{s+\frac{3}{2}}\Vert u_0 \Vert_{\dot{H}^s\left(\mathbb{R}\right) }.
\end{align}
The $\eqref{eqn:fourth order NLS}$ is globally well-posed for initial data $u_0 \in L^2$.
Therefore, it is natural to ask whether the well-posedness also holds in negative Sobolev spaces between $H^{-\frac{3}{2}} $ and $L^2$.  

Let us investigate the one-dimensional cubic NLS:
\begin{align}\label{eqn: cubic NLS,focusing defocusing}\tag{NLS}
\begin{cases}
i\partial_tu+\partial_x^2u=\pm \vert u \vert^2u, \quad(t,x)\in  \mathbb{R}\times \mathbb{R}\\
u(x,0)=u_0(x)\in H^s\left(\mathbb{R}\right).
\end{cases} 
\end{align}
We look at Galilean invariance: if $u$ is a solution of $\eqref{eqn: cubic NLS,focusing defocusing}$ with initial data $u_0$, then
\begin{align} 
u_N(t,x)=e^{ixN}e^{-itN^2}u(t,x-2Nt) 
\end{align}   
is a solution to the same equation $\eqref{eqn: cubic NLS,focusing defocusing}$ with initial data $e^{ixN}u_0(x)$. 
As a consequence of the Galilean invariance, the flow map cannot be uniformly continuous in $H^s, s<0$. The details are presented in \cite{CCT2003},\cite{KPV2001}.
As for $\eqref{eqn:fourth order NLS}$, there is no Galilean symmetry and hence we can pursue the well-posedness theory in negative regularity regime $s<0$.

In \cite{Seong2018}, the author showed that $s=-\frac{1}{2}$ is the sharp regularity threshold for which the well-posedness can be handled by Picard iteration argument. More precisely, for $s\geq -1/2$, the author proved that the $\eqref{eqn:fourth order NLS}$ is to be locally and globally well-posed in $H^s$ and below $s<-1/2$, it is ill-posed in the sense that the flow map fails to be uniformly continuous in $H^s$.

Although the flow map is not uniformly continuous for $s<-1/2$, we may have well-posedness with only continuous dependence on the initial data. Therefore, our final goal is to fill the gap between $H^{-3/2}$ and $H^{-1/2}$.
In fact, to prove the well-posedness, we need to show a priori $H^s$ bounds for the solutions and also establish continuous dependence on the initial data. In this paper, we prove a priori estimates up to $s>-3/4$. As a corollay, we show the existence of a weak solution for any initial data $u_0\in H^s, s>-3/4$. Our method is inspired by Koch-Tataru \cite{KT2007} and Christ-Colliander-Tao \cite{CCT2008}.

The main results of this paper are the following a priori estimate and the existence of weak solution. 
\begin{theorem}[A priori estimate]\label{thm:main theorem}
Let $-\frac{3}{4}<s<-\frac{1}{2}$. Then for any $M> 0$, there exists time $T$ and constant $C$ so that for initial data $u_0 \in \mathcal{S}$ satisfying
\begin{align*}
\Vert u_0 \Vert_{H^s}\leq M,
\end{align*}

the unique solution $u \in C\left([0,T]; \mathcal{S} \right)$ to $\eqref{eqn:fourth order NLS}$(focusing or defocusing) satisfies 
\begin{align}\label{eqn:uniform bound}
\sup\limits_{t\in [0,T]}\Vert u(t) \Vert_{H_x^s}\leq C \Vert u_0 \Vert_{H_x^s}.
\end{align}
\end{theorem}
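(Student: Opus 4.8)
The plan is to implement a Koch–Tataru style argument using frequency-dependent time scales. The key observation is that for a solution concentrated at frequency $N$, the dispersion relation $\xi \mapsto \xi^4$ has curvature $\sim N^2$ at scale $N$, so the natural time interval on which the nonlinear term behaves like a perturbation of linear flow has length $\sim N^{-\alpha}$ for an appropriate exponent $\alpha$ (for the biharmonic operator $\partial_x^4$ one expects $\alpha = 1$, since $|\partial_\xi (\xi^4) \cdot \Delta\xi| \sim N^3 \cdot 1$ dominates the time evolution over unit-length frequency intervals). Accordingly, I would introduce $U^p$- and $V^p$-based spaces $X^s$ built on these short intervals: for each dyadic $N$, the piece $P_N u$ is measured in $U^2$ (or $V^2$) on intervals of length $N^{-1}$, with weight $N^s$ in $\ell^2_N$. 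The strategy is then to close a bootstrap: assume $\|u\|_{X^s([0,T])} \le 2C\|u_0\|_{H^s}$ and show $\|u\|_{X^s} \le C\|u_0\|_{H^s} + C\|u\|_{X^s}^3$, so that for $T$ small depending on $M$ the cubic term is absorbed.

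First I would record the linear estimates: the free biharmonic propagator $e^{it\partial_x^4}$ maps $H^s$ into $X^s$, and the Duhamel term is controlled by the dual space $X^{s,-1}$ or by an appropriate $N^{s}\|\cdot\|_{L^1_t L^2_x}$-type norm on the short intervals. Next comes the heart of the argument, the trilinear estimate $\| \mathcal{N}(u,u,u)\|_{\text{Duhamel space}} \lesssim \|u\|_{X^s}^3$. This is where the low regularity $s > -3/4$ enters: after Littlewood–Paley decomposition into frequencies $N_1, N_2, N_3$ and output frequency $N_0$, one uses the resonance function $\Phi = \xi_0^4 - \xi_1^4 + \xi_2^4 - \xi_3^4$, which for the fourth-order equation factors with a gain that is quantitatively stronger than in the second-order case. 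In the genuinely resonant/near-resonant regime the modulation gain is insufficient, and there I would perform a normal form reduction (differentiation by parts in time), trading the resonant part of the cubic nonlinearity for a boundary term (bounded by $\|u\|_{X^s}^2$) plus a quintic term with enough smoothing to be estimated trivially. The bilinear/Strichartz input — sharp $L^4_{t,x}$ and $L^6$ Strichartz estimates for $\partial_x^4$, together with the $L^2$ bilinear estimate exploiting transversality of $\xi^4$ — is what makes the short-time intervals long enough to sum in $N$ while keeping the power of $N$ on the right side nonpositive precisely for $s > -3/4$.

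The main obstacle I anticipate is the trilinear estimate in the high-high-to-low and high-low-to-high interaction regimes where the modulation is small: the short-time function spaces only gain a fixed (small) power of the interval length $N^{-1}$ per unit of $\|\cdot\|_{V^2}$, so one cannot afford to be lossy, and the normal form boundary terms must be shown to be controlled by $\|u\|_{X^s}^2 \lesssim M^2$ uniformly — this requires that the normal-form transformation be invertible (i.e. a bounded and boundedly-invertible change of unknown) on the relevant frequency regions, which in turn forces the restriction $s > -3/4$ rather than something larger. A secondary technical point is bookkeeping the frequency-dependent time localization: one must patch the short intervals of length $N^{-1}$ together to cover $[0,T]$, and check that the number of intervals ($\sim TN$) times the per-interval gain still sums, which is exactly why the argument does not reach $s = -3/2$. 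Once the bootstrap closes, \eqref{eqn:uniform bound} follows by continuity of $t \mapsto \|u(t)\|_{H^s}$ on $[0,T]$ together with $X^s([0,T]) \hookrightarrow C([0,T];H^s)$, and the Schwartz regularity of the data guarantees we are perturbing a genuine smooth solution so that all manipulations are justified.
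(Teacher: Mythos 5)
Your overall framework (short-time $U^2/V^2$ spaces, a trilinear estimate, a bootstrap) is the right family of ideas, but the proposal has a structural gap that would prevent the bootstrap from closing. In short-time spaces the linear estimate is \emph{not} $\Vert u \Vert_{X^s} \lesssim \Vert u_0\Vert_{H^s} + \Vert\, \vert u \vert^2 u \Vert_{Y^s}$: since the $X^s$ norm takes a supremum over all subintervals of frequency-dependent length inside $[0,T]$, the datum $u_0$ only controls the first such interval, and what one actually gets is $\Vert u \Vert_{X^s} \lesssim \Vert u\Vert_{\ell^2_N L^\infty_t H^s} + \Vert \,\vert u \vert^2 u \Vert_{Y^s}$. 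Consequently a third ingredient is indispensable: an energy estimate $\Vert u\Vert_{\ell^2_N L^\infty_t H^s} \lesssim \Vert u_0\Vert_{H^s} + \Vert u\Vert_{X^s}^3$. In the paper this is where the normal form enters --- not inside the trilinear estimate, as you propose, but as a quartic correction term $E_1$ added to a modified (I-method type) energy $E_0=\langle a(D)u,u\rangle$ with $a\in S^s_\epsilon$, chosen so that the quartic part of $\frac{d}{dt}(E_0+E_1)$ cancels and one is left with a sextic expression $\Lambda_6$ bounded by $\Vert u\Vert_{X^s}^6$. Your two-estimate Picard-style bootstrap, with the Duhamel term controlled directly from the data, omits this energy component entirely, and without it the argument does not close.

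Two further discrepancies. First, the interval length is not $N^{-1}$: the correct frequency-dependent scale, coming from the nonlinear heuristic (the $L^6$ Strichartz gain $N^{-1/3}$ combined with the $H^s$ size of the data), is $\vert I\vert = N^{4s+2}$, which for $s\in(-3/4,-1/2)$ lies strictly between $N^{-1}$ and $1$; the whole numerology of the interval-summation losses (e.g. the factor $\left(M_3/M\right)^{-4s-2}$ in the high-high-high to low interaction) is tuned to this choice, and with $N^{-1}$ intervals the losses in the low-output-frequency cases are strictly worse. Second, the threshold $s>-3/4$ does not come from invertibility of a normal form transformation: the multiplier $b_4$ is fully nonresonant with bounds valid well below $-3/4$ (the correction term only needs $s>-7/6$). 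The restriction arises from summing the interval losses in the worst interaction (high-high-high inputs producing low output), which is handled by combining the modulation gain $\gtrsim M_1M_3^3$ with bilinear Strichartz, local smoothing and maximal function estimates; it is these summations, in both the trilinear and the energy estimates, that pin down $-3/4$.
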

Using the uniform bound $\eqref{eqn:uniform bound}$ together with the uniform bound on nonlinearity
\begin{align*}
\Vert \chi_{[0,T]} u \Vert_{X^s}+\Vert \chi_{[0,T]}  \vert u \vert^2 u \Vert_{Y^s}\lesssim \Vert u_0\Vert_{H^s},
\end{align*}
which is a byproduct of our analysis in proving Theorem \ref{thm:main theorem}, one may also prove the existence of weak solution by following the similar argument as in \cite{CCT2008}. The spaces $X^s,Y^s$ are defined in Section \ref{sec:function space}.

\begin{corollary}[Existence of weak solution.]\label{cor:weak solution}
Let $-\frac{3}{4}<s<-\frac{1}{2}$. For any $M>0$ there exist time $T$ and constant $C$ so that for any initial data in $H^s$ satisfying
\begin{align*}
\Vert u_0 \Vert_{H^s} \leq M,
\end{align*}
there exists a weak solution $u\in C\left(\left[0,T\right]; H^s \right)\cap X^s$ to $\eqref{eqn:fourth order NLS}$ which solves the equation in the sense of distributions and satisfies
\begin{align*}
\Vert u \Vert_{L_t^\infty H^s}+\Vert \chi_{[0,T]} u \Vert_{X^s}+\Vert \chi_{[0,T]} \vert u \vert^2 u \Vert_{Y^s} \leq C \Vert u _0 \Vert_{H^s}.
\end{align*}
\end{corollary}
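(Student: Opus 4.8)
The plan is to obtain the weak solution by a compactness argument in the spirit of Christ--Colliander--Tao \cite{CCT2008}, taking Theorem \ref{thm:main theorem} and the accompanying nonlinear bound $\|\chi_{[0,T]}u\|_{X^s}+\|\chi_{[0,T]}|u|^2u\|_{Y^s}\lesssim\|u_0\|_{H^s}$ as the only quantitative inputs. First I would regularize the data: given $u_0\in H^s$ with $\|u_0\|_{H^s}\le M$, pick $u_{0,n}\in\mathcal S$ with $u_{0,n}\to u_0$ in $H^s$ and $\|u_{0,n}\|_{H^s}\le 2M$. By the well-posedness theory of \cite{Seong2018}, each $u_{0,n}$ generates a smooth solution $u_n\in C([0,T];\mathcal S)$, and Theorem \ref{thm:main theorem} (applied with $M$ replaced by $2M$) together with the nonlinear estimate yields $T=T(M)>0$ and $C=C(M)$, independent of $n$, with
\begin{equation*}
\sup_{t\in[0,T]}\|u_n(t)\|_{H^s}+\|\chi_{[0,T]}u_n\|_{X^s}+\|\chi_{[0,T]}|u_n|^2u_n\|_{Y^s}\le C\|u_{0,n}\|_{H^s}\le 2CM .
\end{equation*}

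Next I would pass to weak limits. Since $L^1([0,T];H^{-s})$ is separable, Banach--Alaoglu gives a subsequence (not relabeled) and a limit $u\in L^\infty([0,T];H^s)$ with $u_n\rightharpoonup u$ weak-$*$; the uniform $X^s$ and $Y^s$ bounds survive by lower semicontinuity, so $u\in X^s$ with $\|\chi_{[0,T]}u\|_{X^s}\lesssim M$, and $u\in C([0,T];H^s)$ by a standard continuity property of the underlying $U^2/V^2$-based spaces, giving $u\in C([0,T];H^s)\cap X^s$. Testing \eqref{eqn:fourth order NLS} against $\varphi\in C_c^\infty((0,T)\times\mathbb R)$ and using weak-$*$ convergence in the linear terms, $u$ solves $i\partial_tu+\partial_x^4u=\mu\,w$ in $\mathcal D'$, where $w$ is the distributional limit of $|u_n|^2u_n$; the equation then forces $u\in C([0,T];H^{s-4})$, which with $u_n(0)=u_{0,n}\to u_0$ and the strong convergence established below gives $u(0)=u_0$.

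The crux, which I expect to be the main obstacle, is to show $w=|u|^2u$. As $s<-\tfrac12<0$ rules out naive $L^2_x$-compactness, I would use that the $X^s$ norm encodes the dispersive estimates for the biharmonic propagator on the frequency-dependent time intervals of Section \ref{sec:function space}, in particular the local smoothing bound $\|D_x^{3/2}e^{it\partial_x^4}f\|_{L^\infty_xL^2_t}\lesssim\|f\|_{L^2}$. Transferred to $X^s$ and localized to a compact $K\subset\mathbb R$, this shows $u_n$ bounded in $L^2([0,T];H^\kappa(K))$ with $\kappa=\tfrac32+s>\tfrac34$, and interpolating with the $L^\infty_tH^s$ bound gives, for some exponents $q>3$ and $r>1$,
\begin{equation*}
\|u_n\|_{L^q([0,T];L^{3r}(K))}+\bigl\||u_n|^2u_n\bigr\|_{L^{q/3}([0,T];L^r(K))}\lesssim 1
\end{equation*}
uniformly in $n$ (the $X^s$-control required here being available precisely because $s>-\tfrac34$). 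The equation then bounds $\partial_tu_n$ in $L^1([0,T];H^{-N}(K))$ for a fixed $N$, so the Aubin--Lions--Simon lemma (Rellich compactness $H^\kappa(K')\hookrightarrow H^{\kappa-\delta}(K)$ in space, equicontinuity in time) gives, along a further subsequence, $u_n\to u$ in $L^2([0,T];H^{\kappa-\delta}(K))$ with $\kappa-\delta>\tfrac12$, hence in $L^2([0,T];L^\infty(K))$ and a.e. Since $\{|u_n|^2u_n\}$ is uniformly integrable on $[0,T]\times K$ by the bound above, Vitali's theorem upgrades this to $|u_n|^2u_n\to|u|^2u$ in $L^1([0,T]\times K)$, hence in $\mathcal D'$, so $w=|u|^2u$ and $u$ is the desired weak solution, the stated bounds being preserved under the weak limits. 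The delicate point throughout is verifying that the $X^s$-control genuinely supplies this positive-regularity space--time integrability in the full range $s>-\tfrac34$; granting that, the remainder is soft functional analysis built on Theorem \ref{thm:main theorem}.
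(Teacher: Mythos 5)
Your high-level plan is the one the paper itself intends: the paper gives no detailed proof of Corollary \ref{cor:weak solution}, it only records the uniform bounds $\Vert u\Vert_{L^\infty_tH^s}$, $\Vert\chi_{[0,T]}u\Vert_{X^s}$, $\Vert\chi_{[0,T]}|u|^2u\Vert_{Y^s}\lesssim\Vert u_0\Vert_{H^s}$ and points to the compactness argument of \cite{CCT2008}. Your regularization, uniform bounds, weak-$*$ extraction, and the reduction to identifying the limit of $|u_n|^2u_n$ all match that route.

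However, the quantitative backbone of your crucial step is flawed, and this is a genuine gap. You transfer the local smoothing estimate as if $X^s$ controlled the $U^2_S$ norm on all of $[0,T]$, obtaining $u_n$ bounded in $L^2_tH^\kappa_{loc}$ with $\kappa=\tfrac32+s>\tfrac34$. But $X^s$ only controls $\Vert\chi_Ju_N\Vert_{U^2_S}$ on the frequency-dependent intervals $|J|=N^{4s+2}$; the estimate \eqref{eqn:local smoothing estimate U^2} must be summed over the $\approx N^{-4s-2}$ such intervals covering $[0,T]$, which costs a factor $N^{-2s-1}$ in any $L^2_t$-based norm. The correct outcome is $\Vert P_Nu_n\Vert_{L^\infty_xL^2_t([0,T]\times\mathbb R)}\lesssim N^{-3s-\frac52}\Vert u_n\Vert_{X^s}$, i.e. a local regularity gain only up to $\kappa<3s+\tfrac52\in(\tfrac14,1)$ on the range $-\tfrac34<s<-\tfrac12$ (the same interval loss degrades the maximal-function and short-time $L^6$ Strichartz bounds you would interpolate with). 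This is still positive, so Aubin--Lions--Simon does give strong local convergence in some $L^2_tH^{\kappa-\delta}(K)$ and a.e.\ convergence along a subsequence; but it does not give $\kappa-\delta>\tfrac12$, hence not $L^2_tL^\infty_x(K)$ convergence, and your claimed uniform bound in $L^q_tL^{3r}_x(K)$ with $q>3$, $r>1$ is not justified near $s=-\tfrac34$. Consequently the uniform integrability of $|u_n|^2u_n$ on $[0,T]\times K$ --- exactly the point you flag as the crux and which is needed to identify $w=|u|^2u$ from a.e.\ convergence --- is not actually established by your argument; one must either redo the space--time integrability bookkeeping honestly with the short-time structure (and with the $\ell^2_NL^\infty_tH^s$ energy component), or identify the limit of the nonlinearity by a different mechanism as in \cite{CCT2008}, rather than by the $\kappa=\tfrac32+s$ numerology. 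A smaller but real gap of the same kind: continuity in time of the limit, $u\in C([0,T];H^s)$, does not follow from ``a standard continuity property'' of $U^2/V^2$ alone and needs an argument using the uniform bounds together with the equation.
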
 
The solution obtained by Corollary \ref{cor:weak solution} is a weak limit of smooth solutions with smooth initial data approximating given data. We call these solutions weak solutions because we do not have any uniqueness or continuous dependence in $H^s, -3/4<s<-1/2$. 
\begin{remark}\label{rem:small data remark}
We can always rescale the initial data and hence it suffices to prove the theorem in small data case $M\ll 1$.	
\end{remark}

In \cite{Seong2018}, by just taking advantage of dispersive smoothing effects (bilinear Strichartz estimates, nonresonant interactions), the author proved the local and global well-posedness for $s \geq -1/2$.
The main part of the local well-posedness is to show that the following trilinear estimate 
\begin{align*}
\| u_1 \overline{u_2}u_3\|_{X^{s,b-1}}\leq \|u_1 \|_{X^{s,\frac{1}{2}+}}\|u_2 \|_{X^{s,\frac{1}{2}+}}\|u_3 \|_{X^{s,\frac{1}{2}+}}	
\end{align*}
holds for $s\geq -1/2$. However, in \cite{Seong2018}, the author display an example that for $s<-1/2$, the above trilinear estimate fails because of the strong resonant interaction of high-high-high to high.

To deal with these resonant interaction, in this paper we use the short time strucutre. More precisely, we use the functions spaces adapted to a short time interval depending on the dyadic size of spatial frequencies. Then these high-high-high to high resonant interaction is overcomed so that we can prove the trilinear estimate below $s<-1/2$.

In Remark \ref{remark:short time length}, one can see that nonlinear solution which is localized at frequency $N$ can be still illustrated by linear dynamics up to the time scale $N^{4s+2}$. In fact, for $s\geq -1/2$, we already have a local well-posedness result. Therefore, in view of perturbation, nonlinear solituions behave like a linear solution over a time interval which is independent of frequency $N$. For $s\geq -\frac{1}{2}$, one can observe  $N^{4s+2}\gtrsim 1$ for all large $N \gg 1$. Therefore, there is an uniform time interval in $N$ so that nonlinear solutions localized at each frequency $N$ show linear dynacims 
on that time interval. But for $s<-1/2$, in order for a nonlinear solution to follow linear dynamics, the time scale must depend on the size of spatial frequencies. Observe that for $s<-\frac{1}{2}$, $N^{4s+2} \ll 1$ for all large $N\gg 1$. In contrast to the case $s\geq -\frac{1}{2}$, it means that different time scales are required for nonlinear solution localized at frequency $N$ to follow linear dynamics.

We also use the $U^p$ and $V^p$ spaces. These spaces have been originally introduced in unpublished work of Tataru on wave maps. In Koch-Tataru \cite{KT2007},  they also used $U^p$ and $V^p$ spaces adapted to time intervals depending on the size of spatial frequencies.
In Section \ref{sec:function space}, we define the function spaces employed in our analysis.

We briefly review the difference between Picard iteration method and the short time structure method. In fact, the latter is less perturbative than the former. We consider the following evolution equation: $\partial_tu -Lu=\mathcal{N}\left(u\right)$, where $Lu$ is a linear part and $\mathcal{N}(u)$ is a homogeneous nonlinearity of degree $p$. 
Then the usual Picard iteration method needs to establish the following two estimates:
\begin{align*}
\text{Linear:}& \quad\quad &\Vert u \Vert_{F^s}\lesssim& \Vert u_0\Vert_{H^s}+\Vert \mathcal{N}(u) \Vert_{N^s},\\
\text{Nonlinear:}& \quad\quad &\Vert \mathcal{N}(u)\Vert_{N^s}\lesssim& \Vert u \Vert_{F^s}^p,
\end{align*}  
where $F^s$ is the space to measure the solutions and $N^s$ is the space to measure the nonlinearity. After obtaining these two estimates, we can apply the fixed point argument to obtain the local well-posedness in $H^s$. For the short time strucutre method, by using the gain coming from the short time scale, nonlinear estimate can be improved up to lower regularity levels compared with the previous method. However, linear estimates are worse than before. To address these expense, we need to establish the additional energy estiamtes. In summary, we must establish the following three estimates:
\begin{align*}
\text{Linear:}&\quad\quad &\Vert u \Vert_{X^s}\lesssim& \Vert u \Vert_{\ell^2L_t^\infty H_x^s}+\Vert \mathcal{N}(u) \Vert_{Y^s},\\
\text{Nonlinear:}&\quad\quad &\Vert \mathcal{N}(u) \Vert_{Y^s}\lesssim& \Vert u \Vert_{X^s}^p,\\
\text{Energy:}&\quad\quad &\Vert u \Vert_{\ell^2L_t^\infty H_x^s}\lesssim& \Vert u_0\Vert_{H^s}+\Vert u \Vert_{X^s}^p,
\end{align*}
where $X^s, Y^s $ and energy space $\ell^2L_t^\infty H_x^s $ are presented in Section \ref{sec:function space}. Then by using a continuity argument combining with above three estimates, one can establish an a priori bound and hence can prove the existence of solutions by a compactness argument. In order to obtain the energy bound, we need to use a normal form technique. In applying the normal form reduction, we need to take the expense of introducing higher order multilinear terms.

Therefore, the process of obtaining a priori bound is divided into two main steps. One is to prove the following trilinear estimates $\eqref{eqn:trilinear estimate}$: Let $-3/4<s<-1/2$. Then we have
\begin{align*}
\Vert u_1u_2u_3 \Vert_{Y^s}\lesssim \Vert u_1 \Vert_{X^s} \Vert u_2 \Vert_{X^s} \Vert u_3 \Vert_{X^s},
\end{align*}
where $Y^s $ is a function space to measure the nonlinear term in $\eqref{eqn:fourth order NLS}$ and $X^s$ is a function space to measure the solutions for $\eqref{eqn:fourth order NLS}$. These function spaces are defined in Section \ref{sec:function space}.

In the nonlinear interactons which result in high frequency $N$, we can use the gain $\vert J \vert=N^{4s+2}$ occuring from the short time structure and hence, combining the dispersive smoothing effects(e.g. Strichartz estimates $\eqref{eqn:Stricharz L-P piece N}$ and bilinear Strichartz estiamtes $\eqref{eqn:bilinear not log}, \eqref{eqn:log interpolation}$), we can obtain the triliner estimates for all $s<-1/2$. 
However, there is a trade-off of using the short time strucutre. One can expect a loss resulted from summation of short time intervals. More precisely, in the nonlinear interactions which result in low frequency $N$, there is a loss of derivative originated from the interval summation. 
In fact, in proving trilinear estimates $\eqref{eqn:trilinear estimate}$, high-high-high to low interaction is the worst case in terms of interval summation losses. To address this side effect of short time strucutre, we need to use the another dispersive smoothing effects. We can observe that the high-high-high to low interaction is a nonresonant interaction. More precisely, either the output or at least one of the inputs must have high modulation:
under $\xi=\xi_1-\xi_2+\xi_3, \tau=\tau_1-\tau_2+\tau_3$, we have
\begin{align*}
&\vert \tau_1-\xi_1^4 \vert + \vert  \tau_2-\xi_2^4 \vert +\vert \tau_3-\xi_3^4 \vert +\vert \tau-\xi^4 \vert \\ \gtrsim&\left \vert \left(\xi-\xi_1\right)\left(\xi-\xi_3\right)\left(\xi_1^2+\xi_2^2+\xi_3^2+\xi^2+2\left(\xi_1+\xi_3\right)^2 \right) \right\vert \\
\gtrsim& M_1 M_3^3, 
\end{align*}  
where the size of frequencies $\left\{\xi, \xi_1,\xi_2,\xi_3  \right\}$ is $\left\{M,M_1,M_2,M_3  \right\}$ with $ \vert \xi \vert\approx M$, $M\ll M_1 \lesssim M_2\approx M_3$. Furthermore, we use the local smoothing effect. Therefore by exploiting these high modulation gain in the high-high-high to low interaction and local smoothing effect, we are able to weaken the interval summation losses and hence can prove trilinear estimate $\eqref{eqn:trilinear estimate}$ up to $s>-\frac{3}{4}$. The details are presented in Lemma \ref{lem:improved trilinear}. In fact, by using only the bilinear smoothing effect $\eqref{eqn:bilinear not log},\eqref{eqn:log interpolation}$ without using the above high modulation gain and local smoothing effect, we can prove tirlinear estimate $\eqref{eqn:trilinear estimate}$ up to $s>-\frac{5}{7}$. For more details, see Remark \ref{rem:-5/7 trilinear}.

In Lemma \ref{lem:nonlinear estimate}, we prove the following linear estimates:    
Let $u$ be a solution of $i\partial_t u -\partial_x^4u=f$. Then we have 
\begin{align}\label{eqn:right handside 2}
\Vert u \Vert_{X^s}\lesssim \Vert u \Vert_{\ell^2 L^{\infty}H^s }+\Vert f \Vert_{Y^s}.
\end{align}    
Therefore, we use  trilinear estimate $\eqref{eqn:trilinear estimate}$ to control the second term on the right hand side of $\eqref{eqn:right handside 2}$.
Therefore, the other part is to prove the following energy estimates $\eqref{eqn:energy estimates N.}$ to control the first term on the right hand side of $\eqref{eqn:right handside 2}$: Let $-\frac{3}{4}< s<-\frac{1}{2}$ and $u$ be a solution to $\eqref{eqn:fourth order NLS}$. On the time interval $[0,1]$, we have the following energy estimates $\eqref{eqn:energy estimates N.}$
\begin{align*}
\Vert u \Vert_{\ell_N^2 L_t^\infty H^s} \lesssim \Vert u_0 \Vert_{H^s}+\Vert u \Vert_{X^s}^3.
\end{align*}
 In Section \ref{sec:energy estimate}, energy estimate is established by using a high frequency damped multiplier. This method is a modification of the $I$-method introduced by Colliander-Keel-Staffilani-Takaoka-Tao \cite{CKSTT2001}, \cite{CKSTT2002}, \cite{CKSTT2003}. In the process of obtaining energy estimates, we use the normal form technique with the function spaces relying on frequency dependent time scales. As in proving trilinear estimate $\eqref{eqn:trilinear estimate}$, there is a loss of derivative resulted from the interval summation. To deal with this loss, 
 we also use Lemma \ref{lem:improved trilinear} again. Our method can not construct energy bound $\eqref{eqn:energy estimates N.}$ for differences of solutions, which is the reason we cannot establish full well-posedness in $H^s, -3/4<s<-1/2$.   
 The remaining part is to just use standard bootstrapping argument with trilinear estimates $\eqref{eqn:trilinear estimate}$ and energy estiamtes $\eqref{eqn:energy estimates N.}$. The details are presented in Section $\ref{sec:main theorem}$.

 \textbf{Organization of paper.} 
This paper is organized as follows. In Section \ref{sec:function space}, we introduce $U^p$ and $V^p$ spaces adapted to short time intervals. In Section \ref{sec:bilinear estimate}, we collect the linear and bilinear dispersive estimates used to prove the trilinear estimate and the energy estimate. These include Strichartz estimates, bilinear Strichartz estimates, local smoothing estimates and maximal function estimates. In Section \ref{sec:trilinear estimate}, the trilinear estimate is proved. To weaken the interval summation losses, we take advantage of Lemma \ref{lem:improved trilinear}.  In Section \ref{sec:energy estimate}, the energy estimate with a higher order correction term is established by using a variation of the I-method. Finally, in Section \ref{sec:main theorem}, all materials are collected to give a proof of Theorem \ref{thm:main theorem}.

\begin{remark}\label{remark:short time length}
We want to explain that a solution whose frequency is localized to $N$ behaves like a linear solution during at least $N^{4s + 2}$ time scales.

Now we assume $u$ is a solution to $\eqref{eqn:fourth order NLS}$, which is localized at frequency $N\gg 1$. We also suppose $u\approx e^{it\partial_x^4}u_0$ on $[0,T]$ for small time $T \ll 1$ with $\Vert u_0 \Vert_{H_x^s}\approx1$.
By using the Duhamel's formula, we write
\begin{align*}
u(t)=e^{it\partial_x^4}u_0\pm \int_0^t e^{i(t-t')\partial_x^4}\vert u \vert^2u(t')\,dt'.
\end{align*}
In order for our solution $u$ to follow linear dynamics on $[0,T]$, we should have
\begin{align}\label{eqn:linear dynamics}
\left \| \int_0^t e^{i(t-t')\partial_x^4}\vert u \vert^2u(t')\,dt' \right\|_{L_t^{\infty}H_x^s\left([0,T]\times \mathbb{R}\right)}\approx 1.
\end{align}
Therefore, we estimate above nonlinear term as
\begin{align*}
&\left \| \int_0^t e^{i(t-t')\partial_x^4}\vert u \vert^2u(t')\,dt' \right\|_{L_t^{\infty}H_x^s\left([0,T]\times \mathbb{R}\right)}\\
\lesssim& N^s \Vert u \overline{u} u \Vert_{L_t^1L_x^2\left([0,T]\times\mathbb{R}\right)}\\
\lesssim & T^{\frac{1}{2}} N^s \Vert u \Vert_{L_t^6L_x^6\left([0,T]\times \mathbb{R} \right)}^3
\end{align*} 
Since $u$ follows linear dynamics on $[0,T]$ i.e. $u\approx e^{it\partial_x^4}u_0  $ on $[0,T]$, $u$ satisfies the Strichartz estimate $\eqref{eqn:D^{1/3} Strichartz}$:
\begin{align*}
\Vert D^{\frac{1}{3}} e^{it\partial_x^4} u_0 \Vert_{L_t^6L_x^6\left(\mathbb{R}\times \mathbb{R}\right)}\lesssim\Vert u_0 \Vert_{L_x^2\left(\mathbb{R}\right)}.
\end{align*}
Therefore, by applying the Strichartz estimate $\eqref{eqn:D^{1/3} Strichartz}$, we have
\begin{align*}
\Vert u\Vert_{L_t^6L_x^6}\approx& \Vert e^{it\partial_x^4}u_0 \Vert_{L_t^6L_x^6\left([0,T]\times \mathbb{R}\right)}\\
\lesssim& N^{-\frac{1}{3}}\Vert u_0 \Vert_{L_x^2}\\
\approx& N^{-\frac{1}{3}-s}\Vert u_0  \Vert_{H^s}\\
\approx & N^{-\frac{1}{3}-s}   
\end{align*}
To obtain $\eqref{eqn:linear dynamics}$, we need
\begin{align*}
T^{\frac{1}{2}} N^s \Vert u \Vert_{L_t^6L_x^6\left([0,T]\times \mathbb{R} \right)}^3\lesssim T^{\frac{1}{2}} N^s \left(N^{-\frac{1}{3}-s} \right)^3 \approx 1.
\end{align*}
Therefore, we choose time scale $T\approx N^{4s+2}\ll 1$.

By observing this heuristic calculation, we will construct our function spaces to be adapted to time intervals whose length depends on the  time scale $T=N^{4s+2}$.   
\end{remark}

\textbf{Notation.} We use $A\lesssim B $ if $A\leq CB$ for some $C>0$. We use $A\approx B$ when $A\lesssim B$ and $B\lesssim A$. Moreover, we use $A\ll B$ if $A\leq \frac{1}{C}B$, where $C$ is a sufficiently large constant. We also write $A^{\pm}$ to mean $A^{\pm \epsilon}$ for any $\epsilon>0$.

Given $ p\geq 1 $, we let $p'$ be the H\"older conjugate of $p$ such that $ \frac{1}{p}+\frac{1}{p'}=1$. We denote $L^p=L^p\left(\mathbb{R}^d\right)$ be the usual Lebesgue space. We also define the Lebesgue space $L^q\left(I,L^r\right)$ be the space of measurable functions from an interval $I\subset \mathbb{R}$ to $L^r$ whose $L^q\left(I,L^r\right)$ norm is finite, where
\begin{align*}
\Vert u\Vert_{L^q\left(I,L^r \right)}=\left(    \int_I \Vert u(t) \Vert_{L^r}^q       \right)^{\frac{1}{q}}.
\end{align*}
We may write $L_t^qL_x^r\left(I\times \mathbb{R}\right)$ instead of  $L^q\left(I,L^r\right)$.

We denote the space time Fourier transform of $u(t,x)$ by $\widehat{u}(\tau,\xi)$ or $\mathcal{F}u$
\begin{align*}
\widehat{u}(\tau,\xi)=\mathcal{F}u(\tau,\xi)=\int e^{-it\tau-ix\xi}u(t,x)\,dtdx.
\end{align*}
On the other hand, the space Fourier transform  of $u(t,x)$ is denoted by
\begin{align*}
\widehat{u}(t,\xi)=\mathcal{F}_xu(t,\xi)=\int e^{-ix\xi}u(t,x)\,dx.
\end{align*}
The fractional differential operator is given via Fourier transform by
\begin{align*}
\widehat{D^{\alpha}u}(\xi)=\vert \xi \vert^{\alpha}\widehat{u}(\xi), \quad \alpha \in\mathbb{R},
\end{align*}
and the biharmonic Schr\"odinger semigroup is defined by
\begin{align}
e^{it\partial_x^4}=\mathcal{F}_x^{-1}e^{it\vert \xi \vert^4}\mathcal{F}_xg
\end{align}
for any tempered distribution $g$.
Let $\varphi:\mathbb{R} \to [0,1]$ be an even, smooth cutoff function supported on $[-2,2]$ such that $\varphi=1$ on $[-1,1]$. Given a dyadic number $N\geq 1$, we set $\varphi_1\left(\xi\right)=\varphi\left(\vert \xi \vert  \right)$ and 
\begin{align*}
\varphi_N\left(\xi\right)=\varphi\left(\frac{\vert \xi \vert}{N}\right)-\varphi\left(\frac{2\vert \xi \vert}{N} \right) 
\end{align*}
for $N\geq 2$. Then we define the Littlewood-Paley projection operator $P_N$ as the Fourier multiplier operator with symbol $\varphi_N$. Moreover, we define $P_{\leq N}$ and $P_{\geq N}$ by $P_{\leq N}=\sum\limits_{1\leq M \leq N}P_M$ and $P_{\geq N}=\sum\limits_{M\geq N}P_M$.
They commute with the derivative operator $D^\alpha$ and the semigroup $e^{it\partial_x^4}$. We also use the notation $u_N=P_Nu$ if there is no confusion.

\textbf{Acknowledgements.} The author would like to appreciate his advisor Soonsik Kwon for helpful discussion and encouragement. The author is also grateful to Chulkwang Kwak for his helpful discussion to understand well the short time structure. The author is partially supported by NRF-2018R1D1A1A09083345 (Korea).

\section{Function spaces}\label{sec:function space}\label{sec:function space} 
In this section, we set up the function spaces employed in our analysis.
We also go over the properties of function spaces $U^p$ and $V^p$ established by Koch, Tataru. These spaces have been used in developing well-posedness of dispersive equations at scaling critical regularities. The details are presented in Hadac-Herr-Koch \cite{HHK2009}, Herr-Tataru-Tzvetkov \cite{HTT1}, Koch-Tataru \cite{KT2018} and Koch-Tataru-Visan \cite{KTV2014}. 

 We take a time interval $I=[a,b), -\infty\leq a < b \leq \infty$. Let $\mathcal{Z}$ be the set of partitions $a=t_0<t_1<\cdots < t_K=b$ of $I$. We also consider functions taking values in $ L^2=L^2\left(\mathbb{R}\right)$.
\begin{definition}
Let $1\leq p <\infty$. For $\left\{t_k  \right\}_{k=0}^K \in \mathcal{Z} $ and $\left\{\phi_k  \right\}_{k=0}^{K-1}\subset L^2\left(\mathbb{R}\right)$ with $\phi_0=0$ and $\sum\limits_{k=0}^{K-1} \Vert \phi_k \Vert_{L^2}^p=1$.
We call the function $a:I \to L^2$ given by
\begin{align*}
a(t)=\sum\limits_{k=1}^K\chi_{[t_{k-1},t_k ) }(t)\phi_{k-1}
\end{align*}
a $U^p\left(I;L^2\right)$- atom.
We define the $U^p\left(I;L^2\right)$ space: 
\begin{align*}
U^p\left(I;L^2\right):=\left\{u=\sum\limits_{j=1}^{\infty} \lambda_j a_j : \text{$a_j$ is $U^p\left(I;L^2\right)$-atom, $\lambda_j \in \mathbb{C}$ such that} \sum\limits_{j=1}^{\infty}\vert \lambda_j \vert<\infty    \right\}
\end{align*}	
with norm
\begin{align*}
\Vert u\Vert_{U^p\left(I;L^2\left(\mathbb{R}\right)\right)}:=\inf \left\{ \sum\limits_{j=1}^{\infty} \vert \lambda_j \vert : u=\sum\limits_{j=1}^{\infty}\lambda_j a_j ,\; \lambda_j \in \mathbb{C},\;  a_j \;U^p\left(I;L^2\right)-\text{atom} 
  \right\}.
\end{align*}
\end{definition} 

\begin{definition}
Let $1\leq p <\infty $. We define the space $ V^p\left(I;L^2\right) $ as the space of functions on $I$ such that 
\begin{align*}
v(a)=\lim\limits_{t\rightarrow a }v(t) \; \text{exists} \quad \text{and} \quad v(b):=\lim\limits_{t\rightarrow b}v(t)=0,
\end{align*} 
and for such functions $v(t)$ we define the norm
\begin{align*}
\Vert v \Vert_{V^p\left(I;L^2\right)}=\sup\limits_{\left\{t_k \right\} \in \mathcal{Z} } \left(\sum\limits_{k}\Vert v(t_k)-v(t_{k-1}) \Vert_{L^2}^p   \right)^{\frac{1}{p}}.
\end{align*}
\end{definition}
We also use the notation $U^p=U^p\left(I;L^2\right)$ and $V^p=V^p\left(I;L^2\right)$ if there is no confusion.

\begin{lemma}[\cite{HHK2009},\cite{KT2007}]\label{lem:U^p ,V^p embedding lemma}
Fix an interval $I=[a,b)$.
\begin{enumerate}[1.]
	
\item Let $1\leq p< q <\infty$. Then we have continuous embeddings  $U^p \hookrightarrow U^q$ and $V^p \hookrightarrow V^q$ i.e. 
\begin{align}
\Vert u \Vert_{U^q}\lesssim \Vert u \Vert_{U^p} \quad \text{and} \quad \Vert u \Vert_{V^q}\lesssim \Vert u \Vert_{V^p}.
\end{align}

\item If $1\leq p < \infty$ and $u(b)=0$, then we have $U^p \hookrightarrow V^p$ i.e.
\begin{align*}
\Vert u \Vert_{V^p}\lesssim \Vert u \Vert_{U^p}.
\end{align*}

\item If $1\leq p < q \leq \infty, u(a)=0 $, and $u\in V^p$ is right continuous, then we have 
\begin{align*}
\Vert u \Vert_{U^q}\lesssim \Vert u \Vert_{V^p}.
\end{align*}

\end{enumerate}
\end{lemma}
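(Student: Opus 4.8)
This is a standard fact about $U^p$ and $V^p$ spaces, and the plan is to verify the three embeddings directly from the definitions, using nothing more than elementary inequalities for $\ell^p$ sequences together with the atomic structure of $U^p$.

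For part 1, I would argue as follows. For the $V^p \hookrightarrow V^q$ embedding, fix $v \in V^p$ and a partition $\{t_k\} \in \mathcal{Z}$. Since $q > p$, the sequence $\bigl(\Vert v(t_k) - v(t_{k-1})\Vert_{L^2}\bigr)_k$ lies in $\ell^p \subset \ell^q$ with $\Vert \cdot \Vert_{\ell^q} \le \Vert \cdot \Vert_{\ell^p}$; taking the supremum over all partitions gives $\Vert v \Vert_{V^q} \le \Vert v \Vert_{V^p}$. For the $U^p \hookrightarrow U^q$ embedding, the key observation is that every $U^p$-atom is, up to the normalization constant, a $U^q$-atom: if $a = \sum_k \chi_{[t_{k-1},t_k)} \phi_{k-1}$ with $\sum_k \Vert \phi_{k-1}\Vert_{L^2}^p = 1$, then $\sum_k \Vert \phi_{k-1}\Vert_{L^2}^q \le 1$ since each $\Vert \phi_{k-1}\Vert_{L^2} \le 1$, so $a$ itself is a (sub-normalized) $U^q$-atom, hence $\Vert a \Vert_{U^q} \le 1$. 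Consequently, from any atomic decomposition $u = \sum_j \lambda_j a_j$ realizing the $U^p$ norm we obtain $\Vert u \Vert_{U^q} \le \sum_j |\lambda_j| \Vert a_j \Vert_{U^q} \le \sum_j |\lambda_j|$, and taking the infimum gives the claim.

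For part 2, given $u \in U^p$ with $u(b) = 0$, I would first prove the bound for a single $U^p$-atom $a = \sum_{k=1}^{K} \chi_{[t_{k-1},t_k)} \phi_{k-1}$: for any partition $\{s_j\}$, refine it by the points $\{t_k\}$, note that each difference $a(s_j) - a(s_{j-1})$ equals $\phi_{m} - \phi_{m'}$ (or a single $\pm\phi_m$ at the endpoints, using $\phi_0 = 0$ and $a(b) = 0$) for suitable indices, and use the triangle inequality in $\ell^p$ together with $\sum_k \Vert \phi_{k-1}\Vert_{L^2}^p = 1$ to get $\Vert a \Vert_{V^p} \lesssim 1$. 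Then for a general $u = \sum_j \lambda_j a_j$ one uses subadditivity of the $V^p$ norm (which follows from Minkowski's inequality in $\ell^p$ applied to the supremum over partitions) to conclude $\Vert u \Vert_{V^p} \le \sum_j |\lambda_j| \Vert a_j \Vert_{V^p} \lesssim \sum_j |\lambda_j|$, and take the infimum.

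For part 3, the $V^p \hookrightarrow U^q$ embedding for $p < q$, I would mimic the argument of Hadac–Herr–Koch: given a right-continuous $v \in V^p$ with $v(a) = 0$, decompose $v$ dyadically in amplitude. Fix $n \in \mathbb{Z}$ and define stopping times $t_0^{(n)} = a$ and $t_{k}^{(n)} = \inf\{ t > t_{k-1}^{(n)} : \Vert v(t) - v(t_{k-1}^{(n)})\Vert_{L^2} \ge 2^n \}$; right continuity ensures these are well-defined and the increments control the piece of $v$ at amplitude scale $2^n$. Writing $v = \sum_n v_n$ where $v_n$ is the step function built from these stopping times, each $v_n$ is (a constant multiple of) a $U^q$-atom with $\Vert v_n \Vert_{U^q} \lesssim 2^n (\#\{k : t_k^{(n)} < b\})^{1/q}$, and the number of such intervals is bounded by $2^{-np}\Vert v \Vert_{V^p}^p$. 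Summing the geometric-type series $\sum_n 2^n \cdot 2^{-np/q} \Vert v \Vert_{V^p}^{p/q}$, which converges precisely because $p < q$, yields $\Vert v \Vert_{U^q} \lesssim \Vert v \Vert_{V^p}$. The one delicate point — and the main obstacle — is handling the convergence of the amplitude decomposition and the case $q = \infty$ carefully (for $q = \infty$ one instead simply observes that a $V^p$ function is bounded, so its step approximations converge uniformly to it in $U^\infty = L^\infty$-type norm); all of this is carried out in detail in \cite{HHK2009}, so I would cite that reference rather than reproduce the full bookkeeping.
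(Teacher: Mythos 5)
Your proposal is correct and follows the same route as the source the paper relies on: the paper offers no proof of this lemma itself, citing \cite{HHK2009} and \cite{KT2007}, and your arguments (the $\ell^p\subset\ell^q$ inequality and the atom renormalization for part 1, the per-atom variation bound plus subadditivity for part 2, and the stopping-time amplitude decomposition for part 3) are exactly the standard proofs from those references, e.g.\ Proposition 2.5 and the surrounding lemmas in \cite{HHK2009}. The only loose ends — the convergence of the amplitude series (which works because the relevant scales $2^n$ are bounded above by $\Vert v\Vert_{V^p}$, so the exponent $1-p/q>0$ gives a convergent geometric sum) and the degenerate case $q=\infty$ — are appropriately deferred to \cite{HHK2009}, so no gap remains.
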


We define $U_S^p\left(I; L^2 \right)$, $V_S^p\left(I; L^2 \right)$ spaces to be the set of all functions $u:I \to L^2$ such that the following $U_S^p\left(I; L^2 \right)$-norm and $V_S^p\left(I; L^2 \right)$-norm are finite:
\begin{align*}
\Vert u \Vert_{U_S^p\left(I; L^2 \right)}:=&\Vert S(-t)u\Vert_{U^p\left(I;L^2 \right)} \quad \text{and} \quad \Vert u \Vert_{V_S^p\left(I; L^2 \right)}:=\Vert S(-t)u\Vert_{V^p\left(I; L^2 \right)},
\end{align*}
where $S(t)=e^{it\partial_x^4}$ denotes the linear propagator for $\eqref{eqn:fourth order NLS}$. Also we use the notation $U_S^p=U_S^p\left(I;L^2\right)$ and $V_S^p=V_S^p\left(I;L^2\right)$ if there is no confusion.
\begin{remark}
Observe that $U^p_S$ is the atomic space, where atoms are piecewise solutions to the linear equation
\begin{align*}
u=\sum\limits_k \chi_{  [t_{k-1},t_k  ) }e^{it\partial_x^4}(t)\phi_{k-1}, \quad \sum\limits_k \Vert \phi_{k-1}\Vert_{L^2\left(\mathbb{R}\right)}^p=1.
\end{align*}
\end{remark}
We denote by $DU_S^p$ the space of functions
\begin{align*}
DU_S^p=\left\{ \left(i\partial_t-\partial_x^4  \right)u; u\in U_S^p          \right\}
\end{align*}
with the induced norm. Then we have the trivial bound
\begin{align}\label{eqn:trivial bound}
\Vert u \Vert_{U_S^p} \lesssim \Vert u_0 \Vert_{L_x^2}+\Vert \left(i\partial_t-\partial_x^4 \right)u \Vert_{DU_S^2}.
\end{align}   
Moreover we have the duality relations
\begin{align*}
\left(DU_S^p\right)^*=V_S^{p'}, \quad 1<p<\infty.
\end{align*}
More precisely, given $\phi\in V_{S}^{p'} $, the mapping $f\to \int \langle f, \phi \rangle_{L^2} \,dt $ belongs $\left( DU_S^p     \right)^*$ and this identification is a surjective isometry. In fact, the spaces $DU_S^p$ and $DV_S^p$ are characterized as the spaces  for which the following norms are finite:
\begin{align}
\Vert f \Vert_{DU_S^p}=&\sup\left\{\int \langle f,\phi \rangle_{L^2} \,dt : \Vert \phi \Vert_{V_S^{p'}}\leq 1, \phi \in C_c^{\infty}    \right\}\label{eqn:Du^p duality}\\
\Vert f \Vert_{DV_S^p}=&\sup\left\{\int \langle f,\phi \rangle_{L^2} \,dt : \Vert \phi \Vert_{U_S^q}\leq 1, \phi \in C_c^{\infty}    \right\}\label{eqn:DV^p duality}.
\end{align}
More specifically, see for instance \cite{KT2007},\cite{KT2018}.

There is another choice for estimating the solution to $\eqref{eqn:fourth order NLS}$. The Bourgain's $X^{s,b}$ spaces is defined by 
\begin{align*}
\Vert u \Vert_{X^{s,b}}^2=\int \vert \widehat{ u}(\tau,\xi) \vert^2 \langle \xi \rangle^{2s}\langle \tau-\xi^4 \rangle^{2b} \,d\xi d\tau,
\end{align*}
where $\langle \cdot \rangle=\left(1+\vert \cdot \vert^2\right)^{\frac{1}{2}}$.
The space $X^{s,b}$ turns out to be very useful in the study of low regularity theory. But for $b=\frac{1}{2}$, logarithmic divergences happen in several estimates. 
To deal with this issues we consider dyadic decompositions with respect to the modulation $\tau-\xi^4$. This leads to the additional homogeneous Besov type norms
\begin{align*}
\Vert u \Vert_{\dot{X}^{s,\frac{1}{2},1 }  }=&\sum\limits_M \left( \int_{\vert \tau-\xi^4 \vert\approx M }  \vert \widehat{u}\left(\tau,\xi\right)\vert^2 \xi^{2s} \vert \tau-\xi^4 \vert \,d\xi d\tau  \right)^{\frac{1}{2}}\\
\Vert u \Vert_{\dot{X}^{s,\frac{1}{2},\infty }}=& \sup\limits_{M} \left(\int_{\vert \tau-\xi^4 \vert \approx M } \vert \widehat{ u}(\tau,\xi) \vert^2 \xi^{2s} \vert \tau-\xi^4 \vert \,d\xi d\tau  \right)^{\frac{1}{2}}.
\end{align*}
These homogeneous Besov type spaces are closely related to the spaces $U_S^2$ and $V_S^2$. Combining the embedding $V^2 \hookrightarrow \dot{B}^{\frac{1}{2}}_{2,\infty}$ with duality we have
\begin{align*}
\dot{X}^{0,\frac{1}{2},1}\hookrightarrow U^2_S \hookrightarrow V^2_S \hookrightarrow \dot{X}^{0,\frac{1}{2},\infty}.
\end{align*} 
In the following we use a Littlewood-Paley decomposition with respect to the modulation $\tau-\xi^4$ as well as a spatial Littlewood-Paley decomposition
\begin{align*}
1=\sum\limits_{N\geq 1 }P_N, \quad\quad 1=\sum\limits_{N\geq 1 }Q_N
\end{align*}  
Both decompositions are inhomogeneous. It is easy to see that we have the uniform boundedness properties
\begin{align*}
P_N:U_S^p \to U_S^p, \quad\quad Q_N:U_S^p \to U_S^p
\end{align*}    
and similarly for $V_S^p$. Moreover $U_S^p$ and $V_S^p$ spaces behave well with respect to sharp time cut off. If $I$ is a time interval, then we have
\begin{align*}
\chi_I:U_S^p \to U_S^p ,\quad\quad \chi_I:V_S^p \to V_S^p
\end{align*} 
with uniform bounds with respect to $I$.

We define an energy space with a standard energy norm
\begin{align*}
\Vert u \Vert_{\ell_N^2L_t^{\infty}H_x^s }^2:=\sum\limits_{N\geq 1}N^{2s} \Vert u_N \Vert_{L^\infty_t L_x^2}^2, 
\end{align*}
where we sum over all dyadic numbers $\geq 1$ with the obvious modification at $N=1$.
Note that $\Vert u \Vert_{L^\infty_t H^s_x }\leq \Vert u \Vert_{\ell_N^2 L^\infty_t H^s }$, but the converse is not true.

To estimate the solutions to $\eqref{eqn:fourth order NLS}$ we define the space $X^s$ with the norm
\begin{align}
\Vert u \Vert_{X^s}^2:=\sum\limits_{N\geq 1}N^{2s}\sup\limits_{\vert I \vert=N^{4s+2}} \Vert \chi_I u_N \Vert_{U_S^2}^2  
\end{align}
where we sum over all dyadic numbers $\geq 1$ with the obvious modification at $N=1$ and the supremum is taken over all subintervals $I\subset [0,1]$ of length $N^{4s+2}$.

To measure the regularity of the nonlinear term we define the space $Y^s$ with the norm
\begin{align}
\Vert f \Vert_{Y^s}^2:=\sum\limits_{N\geq 1}N^{2s}\sup\limits_{\vert I \vert=N^{4s+2}} \Vert \chi_I f_N \Vert_{DU_S^2}^2  
\end{align}
where we sum over all dyadic numbers $\geq 1$ with the obvious modification at $N=1$ and the supremum is taken over all subintervals $I\subset [0,1]$ of length $N^{4s+2}$.

\begin{prop}\label{lem:nonlinear estimate}
Let $u$ be a solution to $i\partial_t u -\partial_x^4u=f$ on $I$. Then we have
\begin{align*}
\Vert u \Vert_{X^s}\lesssim \Vert u \Vert_{\ell_N^2 L^{\infty}_{t}H^s }+\Vert f \Vert_{Y^s}.
\end{align*} 	
This proposition follows from \eqref{eqn:trivial bound}
\begin{align*}
\Vert u \Vert_{U_S^p}\lesssim \Vert u_0 \Vert_{L_x^2}+\Vert f \Vert_{DU_S^2}.
\end{align*}
More specifically, see \cite{CHT2012},\cite{KT2007}

\end{prop}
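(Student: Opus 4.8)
The plan is to reduce the asserted inequality to a single frequency- and time-interval-localized estimate and then reassemble the pieces with the $\ell_N^2$ triangle inequality. Fix a dyadic $N\geq 1$ and a subinterval $I=[a,b)\subset[0,1]$ with $\vert I\vert=N^{4s+2}$. Since the Littlewood--Paley projector $P_N$ commutes with $i\partial_t-\partial_x^4$ and is uniformly bounded on $U_S^2$ and (by duality on $V_S^2$) on $DU_S^2$, the function $u_N=P_Nu$ solves $i\partial_tu_N-\partial_x^4u_N=f_N$ on $I$, and it suffices to prove the localized bound
\begin{align*}
\Vert \chi_I u_N \Vert_{U_S^2}\lesssim \Vert u_N \Vert_{L_t^\infty L_x^2}+\Vert \chi_I f_N \Vert_{DU_S^2}.
\end{align*}

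To establish this, I would pass to the profile $v:=S(-t)u_N$, which satisfies $\partial_t v=-i\,S(-t)f_N$ on $I$, whence for $t\in I$
\begin{align*}
v(t)=v(a)-i\int_a^t S(-t')f_N(t')\,dt'.
\end{align*}
Multiplying by $\chi_I$, the term $\chi_I(t)v(a)$ equals $\Vert v(a)\Vert_{L^2}=\Vert u_N(a)\Vert_{L^2}$ times a single $U^2$-atom supported on $I$, and $\Vert u_N(a)\Vert_{L^2}\leq\Vert u_N\Vert_{L_t^\infty L_x^2}$; the Duhamel term $\chi_I(t)\int_a^t S(-t')f_N(t')\,dt'$ has $U^2$-norm $\lesssim\Vert\chi_I f_N\Vert_{DU_S^2}$ by the definition of $DU_S^2$ as the space of ``time derivatives'' of $U_S^2$-functions, equivalently by the duality characterization \eqref{eqn:Du^p duality} together with the uniform boundedness of the sharp cutoff $\chi_I$ on $U_S^2$ and on $V_S^2$. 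This is precisely the content of the trivial bound \eqref{eqn:trivial bound} applied on the subinterval $I$ with initial time $a$, the endpoint value $u_N(a)$ playing the role of the initial datum (see also \cite{CHT2012,KT2007}); the sharp cutoff is harmless for exactly the reasons recorded in Section~\ref{sec:function space}.

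Finally I would multiply the localized bound by $N^s$ and take the supremum over all admissible $I$; since $\Vert u_N\Vert_{L_t^\infty L_x^2}$ does not depend on $I$, this gives
\begin{align*}
N^s\sup_{\vert I\vert=N^{4s+2}}\Vert \chi_I u_N\Vert_{U_S^2}\lesssim N^s\Vert u_N\Vert_{L_t^\infty L_x^2}+N^s\sup_{\vert I\vert=N^{4s+2}}\Vert \chi_I f_N\Vert_{DU_S^2}.
\end{align*}
Squaring, summing over dyadic $N\geq 1$ (with the obvious modification at $N=1$), taking square roots, and applying Minkowski's inequality in $\ell_N^2$ then yields $\Vert u\Vert_{X^s}\lesssim\Vert u\Vert_{\ell_N^2 L_t^\infty H_x^s}+\Vert f\Vert_{Y^s}$. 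The only genuinely technical point is the localized version of \eqref{eqn:trivial bound} on a subinterval --- controlling the boundary contribution at the left endpoint by the energy norm and checking that the sharp cutoff preserves the atomic structure --- but this is standard $U^p$--$V^p$ machinery and involves no oscillatory or multilinear analysis; everything else is bookkeeping with the definitions of $X^s$, $Y^s$ and $\ell_N^2L_t^\infty H_x^s$.
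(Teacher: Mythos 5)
Your argument is correct and is exactly the fleshed-out version of the paper's proof, which simply invokes the trivial bound \eqref{eqn:trivial bound} on each frequency-localized piece over each interval of length $N^{4s+2}$ (with the left-endpoint datum controlled by the $L_t^\infty L_x^2$ norm) and then sums in $\ell_N^2$, citing \cite{CHT2012},\cite{KT2007} for the localized $U^2$--$V^2$ details. Nothing in your proposal deviates from that route, and the only technical point you flag (the sharp cutoff and the endpoint datum on a subinterval) is the same standard machinery the paper defers to its references.
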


\section{Strichartz, local smoothing and bilinear Strichartz estimate}\label{sec:bilinear estimate}

In this section, we collect the standard linear and bilinear estimates.

\begin{lemma}[\cite{Seong2018}]\label{eqn:Strichartz estimate}
 For any $\alpha \in [0,1]$ , we call a pair $\left(q,r,\alpha \right)$ admissible exponents if $ r \geq 2, q\geq\frac{8}{(1+\alpha)}$ and $\frac{4}{q}+\frac{1+\alpha}{r}=\frac{1+\alpha}{2}$. Then for any admissible exponents $\left(q,r,\alpha\right)$, we have
\begin{align}\label{eqn:general strichartz estiamtes}
\left\| D^{\frac{\alpha}{2}\left(1-\frac{2}{r}\right)} e^{it\partial_x^4}u_0  \right\|_{L_t^qL_x^r\left(\mathbb{R}\times\mathbb{R}\right)}\lesssim_{q,r} & \left\| u_0 \right\|_{L_x^2\left(\mathbb{R}\right)}
\end{align} 
In particular, we have
\begin{align}
\Vert D^{\frac{1}{2}} e^{it\partial_x^4} u_0 \Vert_{L_t^4L_x^\infty\left(\mathbb{R}\times \mathbb{R}\right)}\lesssim&\Vert u_0 \Vert_{L_x^2\left(\mathbb{R}\right)},\label{eqn:D^{1/2} strichartz.}
\\
\Vert D^{\frac{1}{3}} e^{it\partial_x^4} u_0 \Vert_{L_t^6L_x^6\left(\mathbb{R}\times \mathbb{R}\right)}\lesssim&\Vert u_0 \Vert_{L_x^2\left(\mathbb{R}\right)}\label{eqn:D^{1/3} Strichartz},\\
\Vert  e^{it\partial_x^4} u_0 \Vert_{L_t^\infty L_x^2\left(\mathbb{R}\times \mathbb{R}\right)}\lesssim&\Vert u_0 \Vert_{L_x^2\left(\mathbb{R}\right)}\label{eqn:trivial Strichartz}.
\end{align}
\end{lemma}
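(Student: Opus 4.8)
The plan is to first establish the general estimate \eqref{eqn:general strichartz estiamtes} and then deduce the three special cases \eqref{eqn:D^{1/2} strichartz.}, \eqref{eqn:D^{1/3} Strichartz}, \eqref{eqn:trivial Strichartz} by checking admissibility of the relevant triples. For the general estimate, the standard route is the $TT^*$ method combined with a dispersive (decay) estimate for the biharmonic propagator. First I would recall the pointwise decay bound for the kernel of $e^{it\partial_x^4}$: writing $e^{it\partial_x^4}u_0 = K_t * u_0$ with $K_t(x) = c\int e^{ix\xi + it\xi^4}\,d\xi$, a stationary phase / van der Corput analysis of the oscillatory integral $\int e^{i(x\xi + t\xi^4)}\,d\xi$ yields $\|K_t\|_{L^\infty_x} \lesssim |t|^{-1/4}$, and more generally a frequency-localized version $\|P_N e^{it\partial_x^4}\|_{L^1_x \to L^\infty_x} \lesssim |t|^{-1/3} N^{1/3}$ coming from the degeneracy of the phase (the second derivative $12t\xi^2$ vanishes at $\xi = 0$, forcing the use of the third derivative there). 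Interpolating these two types of decay bounds against the trivial $L^2 \to L^2$ bound produces, for $\alpha \in [0,1]$, an estimate of the form $\|D^{\sigma} e^{it\partial_x^4}\|_{L^{r'}_x \to L^r_x} \lesssim |t|^{-\beta}$ with $\beta = \frac{1+\alpha}{2}(1 - \frac{2}{r})$ and $\sigma$ the appropriate derivative loss; this is the anisotropic decay that interpolates the two endpoint behaviors governed by $\alpha = 1$ (genuine $|t|^{-1/4}$-type, non-degenerate Hessian) and $\alpha = 0$ (the degenerate $|t|^{-1/3}$-type near zero frequency).

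Next I would feed this decay estimate into the Keel--Tao abstract Strichartz machinery (or the classical Hardy--Littlewood--Sobolev $TT^*$ argument): if $U(t) = e^{it\partial_x^4}$ satisfies $\|U(t)U(s)^*\|_{L^{r'}\to L^r} \lesssim |t-s|^{-\beta}$ together with the energy bound $\|U(t)\|_{L^2\to L^2} = 1$, then $\|U(t)f\|_{L^q_t L^r_x} \lesssim \|f\|_{L^2}$ whenever $\frac{2}{q} = \beta(1 - \frac{2}{r})$... — here, after accounting for the derivative $D^{\frac{\alpha}{2}(1-\frac2r)}$ that was built into the decay estimate, the scaling relation becomes exactly $\frac{4}{q} + \frac{1+\alpha}{r} = \frac{1+\alpha}{2}$ with the constraint $q \geq \frac{8}{1+\alpha}$ that corresponds to staying on or above the admissible line (the non-endpoint / admissible range). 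This is essentially the computation carried out in \cite{Seong2018}, so I would cite that and only indicate the parameter bookkeeping. Then the three displayed corollaries follow by specialization: \eqref{eqn:D^{1/2} strichartz.} is the triple $(q,r,\alpha) = (4,\infty,1)$, for which $\frac{\alpha}{2}(1-\frac2r) = \frac12$ and $\frac4q + \frac{1+\alpha}{r} = 1 = \frac{1+\alpha}{2}$; \eqref{eqn:D^{1/3} Strichartz} is $(q,r,\alpha) = (6,6,1)$, giving $\frac{\alpha}{2}(1-\frac2r) = \frac13$ and $\frac46 + \frac26 = 1 = \frac{1+\alpha}{2}$; and \eqref{eqn:trivial Strichartz} is just the unitarity of $e^{it\partial_x^4}$ on $L^2$ (equivalently $(q,r) = (\infty,2)$ with no derivative), which needs no decay input at all.

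I do not expect a genuine obstacle here, since this lemma is quoted from \cite{Seong2018}; the only delicate point is the correct derivation and interpolation of the frequency-dependent dispersive decay for the \emph{fourth}-order phase, where the vanishing of the Hessian at zero frequency must be handled (hence the $\alpha$-parameter interpolating between the $|t|^{-1/4}$ global decay and the $|t|^{-1/3}N^{1/3}$ frequency-localized decay). Accordingly, in the write-up I would state the decay estimate precisely, sketch the van der Corput argument for it, invoke Keel--Tao for the passage to spacetime norms, and then simply verify the arithmetic of admissibility for the three corollaries, referring to \cite{Seong2018} for the details already established there.
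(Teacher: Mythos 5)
The paper gives no proof of this lemma at all---it is quoted verbatim from \cite{Seong2018}---and your sketch follows exactly the standard route underlying that reference (stationary-phase/van der Corput dispersive decay for the biharmonic propagator with derivative gain, $TT^*$/Keel--Tao, then arithmetic verification of the triples $(4,\infty,1)$, $(6,6,1)$ and the trivial $(\infty,2)$ case), and your admissibility checks for the three special cases are correct. For a careful write-up you should fix two bookkeeping slips: the interpolated $L^{r'}\to L^{r}$ decay rate must be $|t|^{-\frac{1+\alpha}{4}\left(1-\frac{2}{r}\right)}$, which is precisely $|t|^{-2/q}$ as required by Hardy--Littlewood--Sobolev (your stated $\beta=\frac{1+\alpha}{2}\left(1-\frac{2}{r}\right)$ together with $\frac{2}{q}=\beta\left(1-\frac{2}{r}\right)$ is internally inconsistent with the scaling relation $\frac{4}{q}+\frac{1+\alpha}{r}=\frac{1+\alpha}{2}$), and the frequency-localized van der Corput bound on $|\xi|\approx N$, where $|\partial_\xi^3(x\xi+t\xi^4)|\approx |t|N$, reads $\lesssim (|t|N)^{-1/3}=|t|^{-1/3}N^{-1/3}$, not $|t|^{-1/3}N^{1/3}$.
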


\begin{corollary}
Let $I=[a,b)$ be an interval. Then for any admissible pair $(q,r,\alpha)$, we have 
\begin{align}\label{eqn:Stricharz L-P piece N}
\Vert P_{N}u \Vert_{L_t^qL_x^r\left(I\times \mathbb{R}\right)}\lesssim N^{-\frac{\alpha}{2}\left(1-\frac{2}{r} \right)} \Vert \chi_I u \Vert_{U_S^q}.
\end{align}	
Moreover, we have the dual estimate for $q>2$:
\begin{align}\label{eqn:Stricharz dual L-P piece N}
\Vert P_Nu \Vert_{DU^2_S\left(I;L^2\right)}\lesssim N^{-\frac{\alpha}{2}\left(1-\frac{2}{r} \right)} \Vert u \Vert_{L_t^{q'}L_x^{r'}\left(I\times\mathbb{R}\right)}.
\end{align}
\end{corollary}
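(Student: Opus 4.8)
The plan is to prove the primal bound \eqref{eqn:Stricharz L-P piece N} first, exploiting the atomic structure of $U_S^q$ together with the Strichartz estimate \eqref{eqn:general strichartz estiamtes}, and then to deduce the dual bound \eqref{eqn:Stricharz dual L-P piece N} from \eqref{eqn:Stricharz L-P piece N} by a routine duality argument.

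For \eqref{eqn:Stricharz L-P piece N} I would first reduce to the case where $\chi_I u$ is a single $U_S^q(I;L^2)$-atom $S(t)a(t)$ with $a(t)=\sum_{k=1}^K\chi_{[t_{k-1},t_k)}(t)\phi_{k-1}$ and $\sum_k\Vert\phi_{k-1}\Vert_{L^2}^q=1$; this is legitimate because $P_N$ commutes with $S(t)=e^{it\partial_x^4}$ and is bounded on $U_S^q$, $\chi_I$ is bounded on $U_S^q$, and the general bound then follows by the triangle inequality and infimizing over atomic decompositions. Since $q<\infty$ and the time intervals are disjoint, $\Vert P_Nu\Vert_{L_t^qL_x^r(I\times\mathbb{R})}^q\le\sum_k\Vert S(t)P_N\phi_{k-1}\Vert_{L_t^qL_x^r(\mathbb{R}\times\mathbb{R})}^q$. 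To each term I would apply \eqref{eqn:general strichartz estiamtes} with $u_0$ replaced by $D^{-\frac{\alpha}{2}(1-\frac{2}{r})}P_N\phi_{k-1}$; because this function is frequency-localized to $\vert\xi\vert\approx N$, a Bernstein step converts the derivative weight into the factor $N^{-\frac{\alpha}{2}(1-\frac{2}{r})}$, giving $\Vert S(t)P_N\phi_{k-1}\Vert_{L_t^qL_x^r}\lesssim N^{-\frac{\alpha}{2}(1-\frac{2}{r})}\Vert\phi_{k-1}\Vert_{L^2}$. Summing $q$-th powers and using the atom normalization then yields \eqref{eqn:Stricharz L-P piece N}.

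For the dual estimate \eqref{eqn:Stricharz dual L-P piece N} with $q>2$ I would invoke the duality characterization \eqref{eqn:Du^p duality} with $p=p'=2$, writing $\Vert P_Nu\Vert_{DU_S^2}=\sup\{\int\langle u,P_N\phi\rangle_{L^2}\,dt:\Vert\phi\Vert_{V_S^2}\le 1,\ \phi\in C_c^\infty\}$ after moving $P_N$ onto the test function by self-adjointness. For each such $\phi$, H\"older in $(t,x)$ on $I\times\mathbb{R}$ bounds the pairing by $\Vert u\Vert_{L_t^{q'}L_x^{r'}(I\times\mathbb{R})}\Vert P_N\phi\Vert_{L_t^qL_x^r(I\times\mathbb{R})}$, and the embedding $V_S^2\hookrightarrow U_S^q$, valid for $q>2$ by Lemma \ref{lem:U^p ,V^p embedding lemma}(3), combined with \eqref{eqn:Stricharz L-P piece N} applied to $\phi$ gives $\Vert P_N\phi\Vert_{L_t^qL_x^r(I\times\mathbb{R})}\lesssim N^{-\frac{\alpha}{2}(1-\frac{2}{r})}\Vert\phi\Vert_{V_S^2}$. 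Taking the supremum over $\phi$ closes the argument.

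I expect no serious obstacle: this is a standard transference of a linear Strichartz estimate to the $U^p$/$V^p$ framework. The only points demanding a little care are the Bernstein-type reduction of the weight $D^{\frac{\alpha}{2}(1-\frac{2}{r})}$ to the factor $N^{-\frac{\alpha}{2}(1-\frac{2}{r})}$ on the frequency-$N$ piece, and the endpoint/right-continuity bookkeeping in the embedding $V_S^2\hookrightarrow U_S^q$ and in the uniform boundedness of $\chi_I$ on $U_S^q$ and $V_S^q$; both are handled exactly as in \cite{HHK2009},\cite{KT2007}.
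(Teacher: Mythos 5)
Your proposal is correct and follows essentially the same route as the paper: reduction to a $U_S^q$-atom plus the Strichartz estimate \eqref{eqn:general strichartz estiamtes} (with the frequency localization absorbing the derivative weight into $N^{-\frac{\alpha}{2}(1-\frac{2}{r})}$) for \eqref{eqn:Stricharz L-P piece N}, and then duality $\left(DU_S^2\right)^*=V_S^2$ with H\"older, the primal bound, and the embedding $V_S^2\hookrightarrow U_S^q$ from Lemma \ref{lem:U^p ,V^p embedding lemma} for \eqref{eqn:Stricharz dual L-P piece N}. No gaps; your write-up merely spells out the Bernstein/Plancherel step and the $P_N$ self-adjointness that the paper leaves implicit.
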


\begin{proof}
We may assume $I=\mathbb{R}$ since $\chi_I$ can be inserted. It is enough to consider a $U_S^q$-atom $u$ :
\begin{align}
u(t,x)=\sum\limits_{k=1}^K\chi_{[t_{k-1},t_k)}(t)S(t)\phi_{k-1}(x),\;\sum\limits_{k=1}^K \Vert \phi_{k-1}\Vert_{L_x^2}^q\leq 1, \; \text{and}\; \phi_0=0.
\end{align}	
and show that 
\begin{align}
\Vert P_{N}u \Vert_{L_t^qL_x^r}\lesssim N^{-\frac{\alpha}{2}\left(1-\frac{2}{r} \right)}.
\end{align}
By using Strichartz estimates \eqref{eqn:general strichartz estiamtes} we have
\begin{align*}
\Vert P_{N}u \Vert_{L_t^qL_x^r}^q=&\sum\limits_{k=1}^K \Vert \chi_{[t_{k-1},t_k) }(t) P_NS(t) \phi_{k-1} \Vert_{L_t^qL_x^r}^q\\
\lesssim & N^{-\frac{\alpha}{2}\left(1-\frac{2}{r} \right)q} \sum\limits_{k=1}^K \Vert \phi_{k-1}\Vert_{L_x^2}^q\\
=& N^{-\frac{\alpha}{2}\left(1-\frac{2}{r} \right)q}.
\end{align*}
Now we prove dual estimate. Recall that $\left(DU^2\left(I;L^2\right) \right)^*=V^2\left(I;L^2\right)$. Therefore, we have
\begin{align*}
\Vert P_Nu \Vert_{DU^2_S\left( I;L^2\right)}=&\sup\limits_{\Vert v \Vert_{V_S^2\left(I;L^2\right)  }\leq 1 }\left|  \int_I\int_\mathbb{R} P_Nu \overline{v}\,dxdt    \right|\\
\lesssim& \sup\limits_{\Vert v \Vert_{V_S^2\left(I;L^2\right)  }\leq 1 } \Vert u \Vert_{L_t^{q'}L_x^{r'}\left(I\times\mathbb{R}\right)} \Vert P_N v \Vert_{L_t^{q}L_x^r\left(I\times \mathbb{R}\right)}.
\end{align*}	
By applying $\eqref{eqn:Stricharz L-P piece N}$ and Lemma \ref{lem:U^p ,V^p embedding lemma}, we have
\begin{align*}
\Vert P_N v \Vert_{L_t^qL_x^r\left(I\times \mathbb{R}\right)}\lesssim& N^{-\frac{\alpha}{2}\left(1-\frac{2}{r} \right)}\Vert \chi_Iv \Vert_{U_s^qL^2}\\\lesssim& N^{-\frac{\alpha}{2}\left(1-\frac{2}{r} \right)} \Vert \chi_Iv \Vert_{V_S^2L^2}
\end{align*}
which complete the proof.
\end{proof}

\begin{lemma}[Local smoothing, maximal function estimates \cite{KPV1991}]
\begin{align}
\Vert D^{\frac{3}{2}} e^{it\partial_x^4} u_0 \Vert_{L_x^\infty L_t^2}\lesssim& \Vert u_0 \Vert_{L_x^2}\\
\Vert D^{-\frac{1}{4}} e^{it\partial_x^4} u_0 \Vert_{L_x^4 L_t^\infty}\lesssim& \Vert u_0 \Vert_{L_x^2}\label{eqn:L_tL_x maximal function estimate}
\end{align}	
\end{lemma}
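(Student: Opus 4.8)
The plan is to prove the two inequalities separately, each reducing to a one-dimensional oscillatory integral computation after a suitable change of variables.

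For the local smoothing estimate I would fix $x\in\mathbb{R}$ and write
\[
D^{3/2}e^{it\partial_x^4}u_0(x)=c\int_{\mathbb{R}}e^{i(x\xi+t\xi^4)}\,|\xi|^{3/2}\,\widehat{u_0}(\xi)\,d\xi .
\]
Splitting the integral into $\xi>0$ and $\xi<0$ (the two pieces are treated symmetrically), on $\{\xi>0\}$ I substitute $\mu=\xi^4$, so that $d\xi=\tfrac14\mu^{-3/4}\,d\mu$ and $|\xi|^{3/2}=\mu^{3/8}$; this exhibits the corresponding piece as, up to a constant, the inverse Fourier transform in $t$ of the function $\mu\mapsto e^{ix\mu^{1/4}}\mu^{-3/8}\widehat{u_0}(\mu^{1/4})$, extended by zero to $\mu<0$. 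Plancherel in the variables $t\leftrightarrow\mu$ then shows that the $L^2_t$-norm of this piece, squared, equals
\[
c'\int_0^\infty\mu^{-3/4}\big|\widehat{u_0}(\mu^{1/4})\big|^2\,d\mu=4c'\int_0^\infty|\widehat{u_0}(\xi)|^2\,d\xi ,
\]
where the last identity undoes the substitution ($\mu^{-3/4}\,d\mu=4\,d\xi$). Adding the symmetric contribution of $\{\xi<0\}$ bounds the left side by $C\|u_0\|_{L^2}^2$ independently of $x$, and taking the supremum over $x$ finishes this case. (The exponent $3/2$ is precisely the one that makes the powers of $\xi$ cancel, reflecting the principle that $e^{it\phi(D)}$ gains $\sim(\phi')^{1/2}$ derivatives of $L^\infty_xL^2_t$ smoothing, which is $\sim\xi^{3/2}$ for $\phi(\xi)=\xi^4$.)

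For the maximal function estimate I would argue by duality together with a $TT^*$ argument after linearizing the supremum. Since $D^{-1/4}$ commutes with $e^{it\partial_x^4}$, the claim is equivalent to $\big\|\sup_t|e^{it\partial_x^4}f|\big\|_{L^4_x}\lesssim\|D^{1/4}f\|_{L^2}$; choosing a measurable function $t(\cdot)$ that realizes the supremum up to $\varepsilon$, it suffices to bound
\[
(Tg)(x)=\int_{\mathbb{R}}e^{i(x\xi+t(x)\xi^4)}\,|\xi|^{-1/4}\,\widehat{g}(\xi)\,d\xi
\]
from $L^2$ to $L^4_x$, uniformly over all such $t(\cdot)$. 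Equivalently one needs $TT^*\colon L^{4/3}_x\to L^4_x$, where $TT^*$ is the integral operator with kernel
\[
K(x,y)=\int_{\mathbb{R}}e^{i\big((x-y)\xi+(t(x)-t(y))\xi^4\big)}\,|\xi|^{-1/2}\,d\xi .
\]
The crux is the pointwise bound $|K(x,y)|\lesssim|x-y|^{-1/2}$, uniform in $t(\cdot)$: setting $a=x-y$, $b=t(x)-t(y)$ and rescaling $\xi\mapsto|b|^{-1/4}\xi$ when $b\neq0$ reduces this to the one-variable estimate
\[
\Big|\int_{\mathbb{R}}e^{i(\lambda\xi\pm\xi^4)}\,|\xi|^{-1/2}\,d\xi\Big|\lesssim|\lambda|^{-1/2}\qquad(\lambda\in\mathbb{R}),
\]
which for $|\lambda|\lesssim1$ follows from convergence of the integral and for $|\lambda|\gtrsim1$ from van der Corput's lemma: the phase $\lambda\xi\pm\xi^4$ has a single nondegenerate stationary point at $|\xi|\sim|\lambda|^{1/3}$ with $|\phi''|\sim|\lambda|^{2/3}$, where the weight is $|\xi|^{-1/2}\sim|\lambda|^{-1/6}$, so the stationary contribution is $\sim|\lambda|^{-1/3}|\lambda|^{-1/6}=|\lambda|^{-1/2}$, the region $|\xi|\lesssim|\lambda|^{-1}$ is controlled trivially by integrability of $|\xi|^{-1/2}$, and the remaining nonstationary regions by integration by parts. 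Once $|K(x,y)|\lesssim|x-y|^{-1/2}$ is established, the Hardy–Littlewood–Sobolev inequality on $\mathbb{R}$ (note $\tfrac1{4/3}-\tfrac14=\tfrac12$) gives $\|TT^*\|_{L^{4/3}\to L^4}\lesssim1$, hence $\|T\|_{L^2\to L^4}\lesssim1$, and the estimate follows.

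The main obstacle is the second estimate, and within it the kernel bound: the van der Corput / stationary phase analysis of $\int e^{i(\lambda\xi\pm\xi^4)}|\xi|^{-1/2}\,d\xi$ must be carried out with care near the singularity at $\xi=0$ and near the transition region, making sure no power of $\lambda$ is lost, and the bound must be verified uniformly over all admissible time functions $t(\cdot)$. Both inequalities are classical, due to Kenig–Ponce–Vega \cite{KPV1991}; an alternative to the linearized $TT^*$ argument is to first prove the maximal estimate for data with Fourier support in a single dyadic shell $|\xi|\sim N$ (gaining a factor $N^{1/4}$) and then sum in $N$ via a square function estimate in $L^4_x$, though the summation requires some care in commuting the supremum in $t$ past the Littlewood–Paley projections, which the linearized approach above avoids.
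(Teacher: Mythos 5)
Your proposal is correct and matches the paper's treatment: the paper proves the local smoothing bound exactly by your reduction to Plancherel in $t$ (after the change of variables $\mu=\xi^4$, the exponent $3/2$ cancels the Jacobian), and for the maximal function estimate it simply cites Theorem 2.5 of \cite{KPV1991}, whose proof is precisely the linearized $TT^*$ argument with the van der Corput kernel bound $\vert K(x,y)\vert\lesssim\vert x-y\vert^{-1/2}$ and Hardy--Littlewood--Sobolev that you reconstruct. The only detail to note is the routine measurability of the linearizing function $t(\cdot)$ (e.g.\ restrict to a countable dense set of times), which does not affect the argument.
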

\begin{proof}
In the case of local smoothing estimate, the proof is the same as Schr\"odinger case. In fact, it is reducible to using Plancherel theorem in $L_t^2$. For the proof of the maximal function estimate, see Theorem 2.5 in \cite{KPV1991}.
\end{proof}
\begin{corollary}
Let $I=[a,b)$ be an interval. Then we have
\begin{align}
\Vert P_Nu \Vert_{L_x^\infty L_t^2\left(I\times \mathbb{R}\right)}\lesssim& N^{-\frac{3}{2}}\Vert \chi_I u \Vert_{U_S^2}\label{eqn:local smoothing estimate U^2},\\
\Vert P_Nu \Vert_{L_x^4  L_t^\infty    \left(I\times \mathbb{R}\right)}\lesssim& N^{\frac{1}{4}}\Vert \chi_I u \Vert_{U_S^4}\label{eqn:maximal function estimate U^4}.
\end{align}
\end{corollary}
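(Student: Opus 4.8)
The plan is to deduce both estimates from the propagator-level bounds of the previous lemma by the same atomic-decomposition argument already used for the Strichartz corollary. Recall that $U_S^2$ is an atomic space whose atoms are piecewise linear solutions $u(t,x)=\sum_{k=1}^K \chi_{[t_{k-1},t_k)}(t)S(t)\phi_{k-1}(x)$ with $\sum_k\|\phi_{k-1}\|_{L_x^2}^2\le 1$ and $\phi_0=0$; by the triangle inequality for the atomic norm it suffices to prove the two inequalities for a single atom with the right-hand side replaced by $1$. As before we may take $I=\mathbb{R}$, since the sharp cutoff $\chi_I$ maps $U_S^2$ to itself boundedly and can simply be absorbed.

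First I would treat the local smoothing estimate \eqref{eqn:local smoothing estimate U^2}. The key observation is that the $L_x^\infty L_t^2$ norm, unlike an $L_t^qL_x^r$ norm, does \emph{not} split as an $\ell^2$ sum over the time-partition pieces; instead it behaves like an $L_t^2$-type norm in the time variable, so one has, for the atom $u$,
\begin{align*}
\Vert P_N u \Vert_{L_x^\infty L_t^2(\mathbb{R}\times\mathbb{R})}^2 = \Big\| \sum_{k=1}^K \|\chi_{[t_{k-1},t_k)}(t) P_N S(t)\phi_{k-1}\|_{L_t^2}^2 \Big\|_{L_x^\infty}.
\end{align*}
Bounding the $L^\infty_x$ norm of a sum by the sum of the $L^\infty_x$ norms and then applying the propagator-level local smoothing bound $\|D^{3/2}S(t)\phi\|_{L_x^\infty L_t^2}\lesssim\|\phi\|_{L_x^2}$ to each summand (noting $P_N$ localizes $D^{3/2}$ to size $\approx N^{3/2}$, so $\|P_N S(t)\phi_{k-1}\|_{L_x^\infty L_t^2}\lesssim N^{-3/2}\|\phi_{k-1}\|_{L_x^2}$) gives $\lesssim N^{-3}\sum_k\|\phi_{k-1}\|_{L_x^2}^2\le N^{-3}$, i.e. the claim. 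The point worth checking carefully is precisely this interchange: because the time intervals $[t_{k-1},t_k)$ are disjoint, $\|\chi_{[t_{k-1},t_k)}f\|_{L_t^2}^2$ summed over $k$ equals $\|f\|_{L_t^2}^2$ on the union, so one is genuinely summing (not taking a supremum), and the subadditivity of $\|\cdot\|_{L_x^\infty}$ applied to a sum of nonnegative functions does the rest with no loss.

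The maximal function estimate \eqref{eqn:maximal function estimate U^4} is handled in the same spirit but now using a $U_S^4$-atom, for which $\sum_k\|\phi_{k-1}\|_{L_x^2}^4\le 1$. Here the norm is $L_x^4 L_t^\infty$: the $L_t^\infty$ over a union of disjoint intervals is the max over the pieces, hence is bounded by their $\ell^4$ (indeed $\ell^p$ for any $p$) sum, so $\|P_N u\|_{L_x^4 L_t^\infty}^4 \le \big\| \sum_k \|\chi_{[t_{k-1},t_k)} P_N S(t)\phi_{k-1}\|_{L_t^\infty}^4 \big\|_{L_x^1}$, and then the propagator bound $\|D^{-1/4}S(t)\phi\|_{L_x^4 L_t^\infty}\lesssim\|\phi\|_{L_x^2}$ combined with the frequency localization ($D^{-1/4}\sim N^{-1/4}$ on $P_N$, so one picks up $N^{1/4}$) yields $\lesssim N^{1}\sum_k\|\phi_{k-1}\|_{L_x^2}^4\le N$, which is \eqref{eqn:maximal function estimate U^4}. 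The only mild subtlety, and the step I would expect to need the most care, is getting the exponent bookkeeping right in the $L_x^4 L_t^\infty$ case — one must take $\|\cdot\|_{L_t^\infty}$ of the sum, bound it by the $\ell^4$-sum of $\|\cdot\|_{L_t^\infty}$ of the pieces (using $\|\cdot\|_{\ell^\infty}\le\|\cdot\|_{\ell^4}$ on the index set), then raise to the fourth power and integrate $dx$, which is exactly where the $U^4$-atom normalization $\sum_k\|\phi_{k-1}\|_{L_x^2}^4\le1$ is consumed; matching the mixed-norm ordering $L_x^4L_t^\infty$ (space outside, time inside) against the partition structure is what makes this slightly less mechanical than the pure Strichartz case, but it goes through. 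Summing over atoms in the definition of the $U_S^2$ (resp. $U_S^4$) norm then completes the proof.
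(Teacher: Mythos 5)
Your argument is correct and is essentially the paper's own proof: the paper simply says to reduce to $U_S^2$- and $U_S^4$-atoms as in the Strichartz corollary (citing Proposition 2.19 of Hadac--Herr--Koch), which is exactly the atomic decomposition plus propagator-level local smoothing/maximal estimates with the $N^{\mp}$ frequency bookkeeping that you carry out explicitly.
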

\begin{proof}
As we proceed in the proof of Corollary 3.2, it suffices to consider $U_S^2,U_S^4$ atoms respectively. 
For more details, see Proposition 2.19 in \cite{HHK2009}.
\end{proof}

\begin{prop}[Bilinear estimate]\label{prop:Lesbegue space bilinear}
Let $A_1,A_2\subset \mathbb{R}$  such that
\begin{align*}
\vert \xi_1^3-\xi_2^3 \vert=\vert \xi_1-\xi_2\vert \vert \xi_1^2+\xi_1 \xi_2 +\xi_2^2 \vert \approx N N_{\max}^2, 
\end{align*}
for all $\xi_1 \in A_1$,$\xi_2 \in A_2$ and $N_{\max}=\max\left\{\vert \xi_1 \vert,\vert \xi_2 \vert \right\} $. 	
We define the projection operators $P_{A_j}$ as the Fourier multiplier operators $\widehat{P_{A_j}\psi}(\xi)=\chi_{A_j}\widehat{\psi}(\xi)$ for a function $\psi$.
Then we have
\begin{align}
\Vert P_{A_1}e^{it\partial_x^4}\phi_1 P_{A_2}e^{it\partial_x^4}\phi_2 \Vert_{L_t^2L_x^2}\lesssim N^{-\frac{1}{2}}N_{\max}^{-1} \Vert P_{A_1}\phi_1 \Vert_{L_x^2} \Vert P_{A_2}\phi_2 \Vert_{L_x^2}. 
\end{align}
\end{prop}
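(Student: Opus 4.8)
The plan is to reduce the bilinear estimate to a change-of-variables computation on the Fourier side, exactly as in the classical bilinear Strichartz estimate of Bourgain. First I would write out the space-time Fourier transform of the product $P_{A_1}e^{it\partial_x^4}\phi_1 \cdot P_{A_2}e^{it\partial_x^4}\phi_2$. Since $e^{it\partial_x^4}\phi_j$ has space-time Fourier support on the curve $\tau = \xi^4$, the product's Fourier transform is a convolution supported where $\tau = \xi_1^4 + \xi_2^4$ with $\xi = \xi_1 + \xi_2$. By Plancherel in $(t,x)$ it suffices to bound
\begin{align*}
\left\| \int_{\xi_1+\xi_2=\xi} \chi_{A_1}(\xi_1)\chi_{A_2}(\xi_2)\, \widehat{\phi_1}(\xi_1)\widehat{\phi_2}(\xi_2)\, \delta\bigl(\tau - \xi_1^4 - \xi_2^4\bigr)\, d\xi_1 \right\|_{L^2_{\tau,\xi}}.
\end{align*}

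Next I would, for fixed $(\tau,\xi)$, apply Cauchy--Schwarz in the $\xi_1$-integral, pulling out an $L^1$ factor equal to the measure (with the delta-function weight) of the set $\{\xi_1 : \xi_1 \in A_1,\ \xi-\xi_1 \in A_2,\ \tau = \xi_1^4 + (\xi-\xi_1)^4\}$. The key point is to bound this one-dimensional measure: on the constraint surface, parametrizing by $\xi_1$, the Jacobian of the map $\xi_1 \mapsto \xi_1^4 + (\xi - \xi_1)^4$ is $4\xi_1^3 - 4(\xi-\xi_1)^3 = 4(\xi_1^3 - \xi_2^3) = 4|\xi_1-\xi_2||\xi_1^2+\xi_1\xi_2+\xi_2^2|$, which by the hypothesis is comparable to $N N_{\max}^2$. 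Hence the weighted measure of that set is $\lesssim (N N_{\max}^2)^{-1}$, uniformly in $(\tau,\xi)$. Plugging this back and using Cauchy--Schwarz followed by Fubini/Plancherel in the remaining $L^2$ norm yields the factor $(N N_{\max}^2)^{-1/2} = N^{-1/2} N_{\max}^{-1}$ times $\|P_{A_1}\phi_1\|_{L^2}\|P_{A_2}\phi_2\|_{L^2}$, which is exactly the claimed bound.

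The main obstacle — really the only nontrivial point — is making the Jacobian/measure estimate rigorous near the degenerate set where $\xi_1^3 = \xi_2^3$, i.e. where the phase $\xi_1^4 + \xi_2^4$ is stationary in $\xi_1$. The hypothesis $|\xi_1^3 - \xi_2^3| \approx N N_{\max}^2$ for all $\xi_1 \in A_1, \xi_2 \in A_2$ is precisely designed to keep us bounded away from this set, so the resonance function has a uniform lower bound on the derivative and the level sets $\{\tau = \xi_1^4 + \xi_2^4\}$ are transverse with controlled measure. I would also need the standard care in handling the delta distribution — this is cleanly done by writing $e^{it\partial_x^4}\phi_j(x) = \int e^{ix\xi_j + it\xi_j^4}\widehat{\phi_j}(\xi_j)\,d\xi_j$, multiplying, taking the $L^2_{t,x}$ norm directly, and performing the change of variables $(\xi_1,\xi_2) \mapsto (\xi, \mu)$ with $\mu = \xi_1^4 + \xi_2^4$, whose Jacobian is $|\xi_1^3 - \xi_2^3|$ up to a constant; the bound on $|\xi_1^3-\xi_2^3|$ from below then bounds the $1/|{\rm Jac}|$ factor from above by $N^{-1}N_{\max}^{-2}$, and a final Cauchy--Schwarz in $\mu$ gives the square root. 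This is the same scheme as in the bilinear estimates of \cite{Seong2018}, specialized to the frequency regime at hand.
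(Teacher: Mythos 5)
Your proposal is correct and follows essentially the same route as the paper: both reduce via Plancherel (the paper dualizes against a test function $\psi$, you phrase it via the delta-measure on the level set, and your "cleaner" alternative is literally the paper's change of variables $(\xi_1,\xi_2)\mapsto(\xi_1+\xi_2,\xi_1^4+\xi_2^4)$), exploit the Jacobian $|\xi_1^3-\xi_2^3|\approx NN_{\max}^2$, and finish with Cauchy--Schwarz to obtain the factor $N^{-\frac{1}{2}}N_{\max}^{-1}$. No gaps.
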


\begin{proof} 
    Note that by duality, 
	\begin{align*}
	&\left\| P_{A_1}e^{it\partial_x^4}\phi_1  P_{A_2}e^{it\partial_x^4}\phi_{2} \right\|_{L_{t,x}^2\left(\mathbb{R}\times \mathbb{R}\right)}\\
	=&\left\|\int_{\mathbb{R}}e^{it\xi_1^4}\widehat{P_{A_1}\phi_{1}}\left(\xi_1\right) e^{it\left(\xi-\xi_1\right)^4}\widehat{P_{A_2}\phi_2}\left(\xi-\xi_1\right) \,d\xi_1 \right\|_{L_{t,\xi}^2\left(\mathbb{R}\times \mathbb{R}\right)}\\
	=&\sup\limits_{\left\|\psi \right\|_{L_{t,\xi}}^2=1} \left|\int_{\mathbb{R}\times\mathbb{R}} \int_{\mathbb{R}}e^{it\xi_1^4}\widehat{P_{A_1}\phi_{1}}\left(\xi_1\right) e^{it\left(\xi-\xi_1\right)^4}\widehat{P_{A_2}\phi_2}\left(\xi-\xi_1\right)\psi\left(t,\xi
	\right) \,d\xi_1 d\xi dt \right|.
	\end{align*}
	Hence, it suffcies to consider the integral 
	\begin{align*}
	&\left|\int_{\mathbb{R}\times\mathbb{R}} \int_{\mathbb{R}}e^{it\xi_1^4}\widehat{P_{A_1}\phi_1}\left(\xi_1\right) e^{it\left(\xi-\xi_1\right)^4}\widehat{P_{A_2}\phi_2}\left(\xi-\xi_1\right)\psi\left(t,\xi
	\right) \,d\xi_1 d\xi dt \right|\\
	=& \left|\int_{\mathbb{R}\times\mathbb{R}} \int_{\mathbb{R}}e^{it\xi_1^4}\widehat{P_{A_1}\phi_1}\left(\xi_1\right) e^{it \xi_2^4}\widehat{P_{A_2}\phi_2}\left(\xi_2\right)\psi\left(t,\xi_1+\xi_2
	\right) \,d\xi_1 d\xi_2 dt \right|\\
	=& \left|\int_{\mathbb{R}} \int_{\mathbb{R}}\widehat{P_{A_1}\phi_1}\left(\xi_1\right)\widehat{P_{A_2}\phi_2}\left(\xi_2\right)\mathcal{F}_t\psi\left(\xi_1^4+\xi_2^4,\xi_1+\xi_2 \right) \,d\xi_1 d\xi_2  \right|.
	\end{align*}
	We consider the change of variable $\left(\xi_1,\xi_2\right) \to \left(\eta_1(\xi_1,\xi_2),\eta_2(\xi_1,\xi_2)\right)=\left(\xi_1+\xi_2,\xi_1^4+\xi_2^4 \right)$ with the Jacobian $\left| J \right|=4\left| \xi_1^3-\xi_2^3\right|\approx N N_{\max}^2$.	
	Hence, 
	\begin{align*}
	\left| J \right|d\xi_1 d\xi_2=d\eta_1 d\eta_2 \quad \text{or} \quad d\xi_1 d\xi_2    = \left| J \right|^{-1} d\eta_1 d\eta_2.
	\end{align*}
	So, by H\"older's inequality, the Plancherel theorem and again change of variable, we have
	\begin{align*}
	&\left|\int_{\mathbb{R}} \int_{\mathbb{R}}\widehat{P_{A_1}\phi_1}\left(\xi_1\right)\widehat{P_{A_2}\phi_2}\left(\xi_2\right)\mathcal{F}_t\psi\left(\xi_1^4+\xi_2^4,\xi_1+\xi_2 \right) \,d\xi_1 d\xi_2  \right|\\
	\lesssim& \left\| \mathcal{F}_t \psi \right\|_{L_{\eta_1,\eta_2}^2} \left|\int_{ \eta_1\in \mathbb{R}} \int_{\eta_2\in\mathbb{R}}\left|\widehat{P_{A_1}\phi_1}\left(\eta_1,\eta_2 \right)\widehat{P_{A_2}\phi_2}\left(\eta_1,\eta_2\right)\right|^{2}  \left| J\right|^{-2} \,d\eta_1 d\eta_2 \right|^{\frac{1}{2}}\\
	\lesssim &  N^{-\frac{1}{2}}N_{\max}^{-1} \left|\int_{ \xi_1\in \mathbb{R}} \int_{\xi_2\in\mathbb{R}}\left|\widehat{P_{A_1}\phi_1}\left(\xi_1 \right)\widehat{P_{A_2}\phi_2}\left(\xi_2\right)\right|^{2}  \,d\xi_1 d\xi_2 \right|^{\frac{1}{2}}\\
	\lesssim & N^{-\frac{1}{2}}N_{\max}^{-1} \left\|P_{A_1}\phi_{1} \right\|_{L_x^2\left(\mathbb{R}\right)} \left\|P_{A_2}\phi_{2} \right\|_{L_x^2\left(\mathbb{R}\right)}.
	\end{align*}	
\end{proof}

\begin{corollary}[bilinear Strichartz estimates]  
Let $N_1\ll N_2$. Then we have
\begin{align}
\Vert P_{N_1}u_1 P_{N_2}u_2 \Vert_{L_t^2L_x^2\left(I\times \mathbb{R}\right)}\lesssim& N_2^{-\frac{3}{2}} \Vert \chi_I P_{N_1}u_1 \Vert_{U_S^2L^2}\Vert \chi_I P_{N_1}u_1 \Vert_{U_S^2L^2},\label{eqn:bilinear not log}\\
\intertext{and}
\Vert P_{N_1}u_1 P_{N_2}u_2 \Vert_{L_t^2L_x^2\left(I\times \mathbb{R}\right)}\lesssim& N_2^{-\frac{3}{2}} \left(\ln N_2+1 \right)^2 \Vert \chi_I P_{N_1}u_1 \Vert_{V_S^2L^2}\Vert \chi_I P_{N_1}u_1 \Vert_{V_S^2L^2}.\label{eqn:log interpolation}
\end{align}		
\end{corollary}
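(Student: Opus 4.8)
The plan is to transfer the frequency-localized bilinear estimate of Proposition \ref{prop:Lesbegue space bilinear} from free solutions $e^{it\partial_x^4}\phi_j$ to general $U_S^2$ functions by the standard atomic argument, and then to upgrade to $V_S^2$ by interpolation, absorbing the logarithmic loss through the embedding $V^2\hookrightarrow U^p$ for $p>2$. First I would verify that the geometric hypothesis of Proposition \ref{prop:Lesbegue space bilinear} is met in the regime $N_1\ll N_2$: take $A_1$ to be (a cover of) the support of $\varphi_{N_1}$ and $A_2$ that of $\varphi_{N_2}$; then for $\xi_1\in A_1$, $\xi_2\in A_2$ one has $|\xi_1-\xi_2|\approx N_2$ and $|\xi_1^2+\xi_1\xi_2+\xi_2^2|\approx N_2^2$, so $|\xi_1^3-\xi_2^3|\approx N_2^3$, i.e. the proposition applies with $N\approx N_2$ and $N_{\max}\approx N_2$, giving the gain $N^{-1/2}N_{\max}^{-1}\approx N_2^{-3/2}$. (One may need to decompose $A_2$ into $O(1)$ pieces on which $\xi_1-\xi_2$ does not change sign and has fixed size, which is harmless.)

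Next I would run the atomic argument for \eqref{eqn:bilinear not log}. By definition of $U_S^2$ it suffices to prove the estimate when $\chi_I P_{N_1}u_1$ and $\chi_I P_{N_2}u_2$ are $U_S^2$-atoms, say $u_j=\sum_{k}\chi_{[t^{(j)}_{k-1},t^{(j)}_k)}(t)\,S(t)\phi^{(j)}_{k-1}$ with $\sum_k\|\phi^{(j)}_{k-1}\|_{L^2}^2\le 1$. On any common refinement of the two partitions, the product $u_1u_2$ is, on each subinterval, of the form $S(t)\phi^{(1)}\cdot S(t)\phi^{(2)}$ with the appropriate frequency localizations, so Proposition \ref{prop:Lesbegue space bilinear} applies on each piece and, after summing the $L^2_{t,x}$ norms over the (disjoint in time) subintervals and using Cauchy–Schwarz in $k$ together with $\sum_k\|\phi^{(1)}_{k-1}\|_{L^2}^2\le 1$, $\sum_k\|\phi^{(2)}_{k-1}\|_{L^2}^2\le 1$, one recovers the bound $N_2^{-3/2}$. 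Bilinearity of the map $(u_1,u_2)\mapsto u_1u_2$ then promotes this from atoms to general $U_S^2$ elements, with the constant depending linearly on each $\|\cdot\|_{U_S^2}$; this is exactly \eqref{eqn:bilinear not log}.

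For \eqref{eqn:log interpolation} I would use the standard device (as in Hadac–Herr–Koch, Proposition 2.19) of interpolating between the $U_S^2\times U_S^2\to L^2_{t,x}$ bound just proved and the trivial $U_S^\infty\times U_S^\infty\to L^\infty_t L^2_x \hookrightarrow$ (no, rather: $L^2_tL^2_x$ on the fixed-length interval, or more precisely the bound with a worse power of $N_2$) estimate, using the logarithmic interpolation lemma relating $V^2$ to the scale of $U^p$ spaces: for each $p>2$ one has $\|v\|_{U_S^p}\lesssim \|v\|_{V_S^2}$ by Lemma \ref{lem:U^p ,V^p embedding lemma}(3), and summing a geometric-type series in dyadic pieces of the modulation variable produces the factor $(\ln N_2+1)^2$ (one logarithm per factor). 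Concretely, decompose each $u_j$ into modulation pieces $Q_{M_j}u_j$, apply \eqref{eqn:bilinear not log} to the low-modulation parts and a crude bound to the high-modulation parts, and optimize; the two logarithms arise from the two sums over $M_1,M_2$. The main obstacle I anticipate is bookkeeping: making sure the frequency localizations $P_{N_1},P_{N_2}$ are preserved when passing to atoms and to the refined partition, and correctly tracking that the logarithmic loss is $(\ln N_2+1)^2$ and not a larger power — this is where one must invoke the precise $V^2$-to-$U^p$ interpolation estimate rather than a soft embedding. Everything else is routine once Proposition \ref{prop:Lesbegue space bilinear} is in hand.
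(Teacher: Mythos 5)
Your proposal is correct and follows essentially the same route as the paper: the paper proves \eqref{eqn:bilinear not log} by invoking the transference principle (citing Proposition 2.19 of \cite{HHK2009}), which is exactly your atomic argument transferring Proposition \ref{prop:Lesbegue space bilinear} to $U_S^2$, and it obtains \eqref{eqn:log interpolation} from the $V^2$-to-$U^p$ logarithmic interpolation argument of \cite{HHK2009} (Proposition 2.20, Corollary 2.21), which is the interpolation scheme you describe. The bookkeeping points you flag (frequency localization of atoms, the precise source of the two logarithms) are handled exactly as in those cited results, so there is no gap.
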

\begin{proof}
In fact \eqref{eqn:bilinear not log} is the result of the transference principle. For more details, see Proposition 2.19 in \cite{HHK2009}. The proof of estimate $\eqref{eqn:log interpolation}$ follows from the argument in [\cite{HHK2009}, Proposition 2.20, Corollary 2.21.] 	
\end{proof}

  
\section{trilinear estimate}\label{sec:trilinear estimate}
In this section, we prove the following trilinear estimates below $s<-1/2$. As we mentiond above, we use both dispersive smoothing effects (bilinear Strichartz estimates $\eqref{eqn:bilinear not log}, \eqref{eqn:log interpolation}$) and short time structure. Our method is inspired by Koch-Tataru \cite{KT2007}. 
\begin{prop}[Trilinear estimate]\label{prop:trilinear estimate} Let $-3/4<s<-1/2$. Then we have
\begin{align}\label{eqn:trilinear estimate}
\Vert u_1 \overline{u_2}u_3 \Vert_{Y^s}\lesssim \Vert u_1 \Vert_{X^s} \Vert u_2 \Vert_{X^s} \Vert u_3 \Vert_{X^s}.
\end{align}	
\end{prop}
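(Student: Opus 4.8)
The plan is to argue dyadically. Decompose each factor $u_i = \sum_{N_i} P_{N_i} u_i$ and the output frequency into dyadic pieces $Q_N(u_1\overline{u_2}u_3)$. By the definition of $Y^s$, I must bound $N^{2s}\sup_{|I|=N^{4s+2}}\|\chi_I P_N(u_1\overline{u_2}u_3)\|_{DU_S^2}^2$, summed in $N$. Fix the output frequency scale $N$ and a maximal interval $I$ of length $N^{4s+2}$. By frequency considerations (the convolution constraint $\xi = \xi_1 - \xi_2 + \xi_3$), the largest two of $N_1, N_2, N_3$ are comparable; call $N_{\max}$ their common size, and note $N \lesssim N_{\max}$. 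I would organize the analysis into cases according to the relative sizes: (i) high-high-high to high ($N \approx N_{\max}$); (ii) high-high-high to low ($N \ll N_{\max}$); (iii) high-low interactions where one input is much smaller. The key tools are the $DU_S^2$ duality characterization \eqref{eqn:Du^p duality}, reducing the estimate to pairing against $\phi$ with $\|\phi\|_{V_S^2}\le 1$, followed by Hölder in $(t,x)$ and the bilinear Strichartz estimates \eqref{eqn:bilinear not log}, \eqref{eqn:log interpolation} together with the Strichartz estimates \eqref{eqn:Stricharz L-P piece N} and the maximal/local smoothing bounds.

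For the high-high-high to high case I first chop $I$ into $O(1)$ subintervals on which the solution behaves linearly — but since $|I| = N^{4s+2}$ already, this is exactly one short interval and no further subdivision is needed. I pair $\chi_I P_N(u_1\overline{u_2}u_3)$ against $\phi$ with $\|\phi\|_{V_S^2}\le 1$, write the $L^2_{t,x}$ integral, and split off two of the three factors in a bilinear Strichartz estimate (using that among $\xi_1,\xi_2,\xi_3,\xi$ there is a pair with large difference of cubes so that the hypothesis of Proposition 3.6 holds), estimating the remaining factor and $\phi$ in $L^4_tL^\infty_x$ or $L^6_tL^6_x$ via \eqref{eqn:Stricharz L-P piece N}. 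Summing the resulting powers of $N_i$ and $N$ and using $|I|^{1/2} = N^{2s+1}$ to absorb the time integration, I expect the $N_i$-sums to converge for $s > -1/2$ in this case since the resonant interaction is exactly what the short-time length $N^{4s+2}$ was designed to tame (cf. Remark 1.6).

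The main obstacle — and the reason the threshold drops only to $-3/4$ rather than all the way — is case (ii), high-high-high to low. Here summing over the (many) subintervals of $[0,1]$ of length $N_{\max}^{4s+2}$ needed to cover a unit-length interval produces a loss $\sim N_{\max}^{-(2s+1)\cdot(\text{number})}$, i.e. a genuine derivative loss growing with $N_{\max}$. To absorb it I would invoke the improved trilinear estimate, Lemma \ref{lem:improved trilinear}, which exploits two extra gains: the nonresonance identity displayed in the introduction, giving modulation $\gtrsim M_1 M_3^3 \approx N\, N_{\max}^3$ so that at least one factor (or the output) lives at high modulation and can be estimated in $\dot X^{s,\frac12,1}$ with a favorable power, and the local smoothing estimate \eqref{eqn:local smoothing estimate U^2} which gains $N_{\max}^{-3/2}$ on the high-frequency factors placed in $L^\infty_xL^2_t$, paired with the maximal function estimate \eqref{eqn:maximal function estimate U^4} on a low-frequency factor in $L^4_xL^\infty_t$. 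Balancing the modulation gain and local-smoothing gain against the interval-summation loss is precisely where the constraint $s > -3/4$ enters. The remaining high-low cases are easier: the frequency gap gives extra room in the bilinear Strichartz estimate, and no large interval-summation loss occurs because the output frequency is comparable to $N_{\max}$. Finally I collect all cases, perform the dyadic summation in $N, N_1, N_2, N_3$ (using $\ell^2$-summability of the pieces of $u_i$ in $X^s$ and Schur's test / Cauchy–Schwarz for the off-diagonal sums), and arrive at \eqref{eqn:trilinear estimate}.
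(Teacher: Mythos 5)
Your overall strategy is essentially the paper's: dyadic decomposition of all four frequencies, duality against $V_S^2$ via \eqref{eqn:Du^p duality}, short-time Strichartz/bilinear Strichartz in the output-dominant case, and reduction of the high-high-high to low interaction to Lemma \ref{lem:improved trilinear}, whose nonresonance, local smoothing and maximal function ingredients are indeed where the threshold $s>-3/4$ enters. However, there is a concrete gap in your case analysis: the interaction $M_1 \lesssim M \ll M_2 \approx M_3$ (one low input, two high inputs, low output --- the paper's Case 2) is not covered by your trichotomy. It is not a ``high-high-high to low'' interaction, so Lemma \ref{lem:improved trilinear} (which requires all three input frequencies $\gg N$) does not apply to it; and your assertion for the remaining high-low cases that ``no large interval-summation loss occurs because the output frequency is comparable to $N_{\max}$'' is false here, since the output $M$ is much smaller than $M_3$ and one must sum over $(M_3/M)^{-4s-2}\gg 1$ subintervals of length $M_3^{4s+2}$ inside $I$. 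The paper handles this by pairing against $\phi$ with $\Vert \phi\Vert_{V_S^2}\le 1$ and applying the bilinear Strichartz estimates \eqref{eqn:bilinear not log}, \eqref{eqn:log interpolation} to the pairs $u_{M_1}u_{M_2}$ and $u_M u_{M_3}$ on each short subinterval; the gain $M_3^{-3}$ (up to logarithms) beats the interval-summation loss precisely for $s>-\frac{5}{6}$, which is harmless for $s>-\frac34$ but must actually be checked --- as written, your argument leaves this case unproved.

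A smaller point: in the output-dominant case $M_3\approx M$ your plan to ``split off two of the three factors in a bilinear Strichartz estimate'' is not always available, because when all four frequencies are comparable and unseparated (the resonant high-high-high to high interaction) no pair satisfies the separation hypothesis $\vert \xi_i^3-\xi_j^3\vert \approx N N_{\max}^2$ of the bilinear estimate. The route that works, and which you mention as an alternative, is simply H\"older in time (using $\vert I\vert^{1/2}=M^{2s+1}$) together with the $L_t^6L_x^6$ Strichartz bound \eqref{eqn:Stricharz L-P piece N} on all three factors, followed by Schur's test; this closes for every $s<-\frac12$ and is exactly what the paper does. With Case 2 treated as above and the resonant subcase of Case 1 handled by $L^6$ Strichartz alone, your outline coincides with the paper's proof.
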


\begin{proof}
We estimate the nonlinearity $\vert u \vert^2 u $ at frequency $N$ in a $N^{4s+2}$ time interval $I$. We also consider a full dyadic decomposition of each of the factors.
\begin{align*}
\Vert u_1 \overline{u_2} u_3 \Vert_{Y^s}=&
\left( \sum\limits_{N\geq 1} N^{2s}\sup\limits_{\vert I \vert=N^{4s+2}} \Vert \chi_I P_N\left( u_1 \overline{u_2} u_3\right) \Vert_{DU_S^2 }^2\right)^{\frac{1}{2}}\\
\lesssim &  \sum\limits_{N\geq 1} N^{s}\sup\limits_{\vert I \vert=N^{4s+2}} \Vert \chi_I P_N\left( u_1 \overline{u_2} u_3\right) \Vert_{DU_S^2 } 
\intertext{and}
\Vert \chi_I P_N \left(u_1 \overline{u_2} u_{3} \right) \Vert_{DU^2_S}\lesssim& \sum\limits_{1\leq N_1,N_2,N_3}       \Vert \chi_I P_N \left(u_{N_1} \overline{u_{N_2}} u_{N_3} \right) \Vert_{DU^2_S}
\end{align*}
Therefore, we need to show that for an interval $\vert I \vert=N^{4s+2}$
\begin{align*}
\Vert \chi_I P_{N} \left(u_{N_1}u_{N_2}u_{N_3} \right) \Vert_{DU_S^2 }\lesssim & N_1^{\alpha_1}N_2^{\alpha_2}N_3^{\alpha_3}N^{\alpha}   \prod\limits_{j=1}^3\sup\limits_{\vert I_j \vert=N_j^{4s+2}} \Vert \chi_{I_j}u_{N_j} \Vert_{U_S^2}
\end{align*}  
where $N_1^{\alpha_1}N_2^{\alpha_2}N_3^{\alpha_3}N^{\alpha}$ have summability with respect to $N_1,N_2,N_3,N\geq 1$.
We denote 
\begin{align*}
\left\{N,N_1,N_2,N_3 \right\}=\left\{M,M_1,M_2,M_3 \right\}, \quad M_1\leq M_2 \leq M_3, \; N=M.
\end{align*}
Note that the two largest frequencies must be comparable. Hence, we investigate the following cases of interactions:
	
\noindent \textbf{Case 1.} $M_1\leq M_2 \leq M_3 \approx M $ .\\
\textbf{Case 2.} $M_1\lesssim M \ll M_2\approx M_3$.\\
\textbf{Case 3.} $M\ll M_1\leq M_2\approx M_3$.\\

\textbf{Case 1.} $M_1 \leq M_2 \leq M_3 \approx  M $.
In this case, we observe that $\vert I_j \vert \geq \vert I \vert$ for $j=1,2,3$. Hence there is no interval summation loss. 
At the case 1, there is no role of the complex conjugate. Therefore we drop the complex conjugate sign.   
By using the dual estimates $\eqref{eqn:Stricharz dual L-P piece N}$, Strichartz estimate $\eqref{eqn:D^{1/3} Strichartz}$ and embedding $U_S^2\hookrightarrow U_S^6$, we have
\begin{equation}
\begin{split}
\label{short time gain}
\Vert \chi_I P_M\left(u_{M_1}u_{M_2}u_{M_3}   \right) \Vert_{DU^2_S }\lesssim& \Vert u_{M_1} u_{M_2} u_{M_3} \Vert_{L_t^1 L_x^2\left(I\times \mathbb{R}\right)}\\
\lesssim & \vert I \vert^{1/2} \Vert u_{M_1}u_{M_2}u_{M_3} \Vert_{L_t^2L_x^2\left(I\times \mathbb{R}\right)}\\
=& M^{2s+1} \Vert u_{M_1}u_{M_2}u_{M_3} \Vert_{L_t^2L_x^2\left(I\times \mathbb{R}\right)}\\
\lesssim & M^{2s+1} \Vert u_{M_1} \Vert_{L_t^6L_x^6\left(I\times \mathbb{R}\right)} \Vert u_{M_2} \Vert_{L_t^6L_x^6\left(I\times \mathbb{R}\right)} \Vert u_{M_3} \Vert_{L_t^6L_x^6\left(I\times \mathbb{R}\right)}\\
\lesssim & M^{2s+1} M_1^{-\frac{1}{3}} M_2^{-\frac{1}{3}}M_3^{-\frac{1}{3}}\prod\limits_{j=1}^3\Vert \chi_{I}u_{N_j} \Vert_{U_S^6}\\
\lesssim & M^{2s+1} M_1^{-\frac{1}{3}} M_2^{-\frac{1}{3}}M_3^{-\frac{1}{3}}\prod\limits_{j=1}^3\Vert \chi_{I}u_{M_j} \Vert_{U_S^2}.
\end{split}
\end{equation} 
By combining above calculations, we need to focus on the following summation
\begin{align*}
&\sum\limits_{M_1\leq M_2 \leq M_3 \approx M}M^s\sup\limits_{\vert I \vert=M^{4s+2}} \Vert \chi_I P_M\left( u_{M_1}u_{M_2}u_{M_3}\right) \Vert_{DU_S^2}\\
\lesssim& \sum\limits_{M_1\leq M_2 \leq M_3 \approx M} M^{2s+1} M_{1}^{-\frac{1}{3}-s}M_2^{-\frac{1}{3}-s}M_3^{-\frac{1}{3}} \prod_{j=1}^3 \Vert u_{M_j} \Vert_{X^s}\\
\lesssim&\;  \Vert u_1 \Vert_{X^s}\sum\limits_{M_2\leq M_3}M_2^{-2s-\frac{2}{3}} M_3^{2s+\frac{2}{3}} \Vert u_{M_2} \Vert_{X^s} \Vert u_{M_3} \Vert_{X^s}. 
\end{align*}
Therefore, by using Schur's test, we obtain the desired result.
Observe that in this interaction, trilinear estimate is satisfied for all $s<-1/2$.\\

\textbf{Case 2.} $M_1\lesssim M \ll M_2\approx M_3$.
Similarly in case 1 there is no role of complex conjugate and hence we drop the complex conjugate sign. Observe that the $u_{M_2},u_{M_3}$ should be estimated in norm with timescale $M_3^{4s+2}$. We divide $I$ into $\vert I\vert /  \vert J \vert=\left(\frac{M_3}{M}\right)^{-4s-2}\gg 1$ intervals of size $\vert J \vert=M_3^{4s+2}$. By applying the duality $\eqref{eqn:Du^p duality}$, we have
\begin{align*}
\Vert \chi_I P_M\left(u_{M_1}u_{M_2}u_{M_3}\right) \Vert_{DU^2_S }=\sup\limits_{\Vert u \Vert_{V_S^2}=1} \left| \int_I \int_{\mathbb{R}} u_{M_1}u_{M_2}u_{M_3}P_Mu\, dxdt \right|.
\end{align*}
By using the bilinear Strichartz estimates $\eqref{eqn:bilinear not log}$ ,$\eqref{eqn:log interpolation}$, we estimate
\begin{align*}
\left|  \int_I\int_\mathbb{R} u_{M_1}u_{M_2}u_{M_3}u_{M}\,dxdt  \right|\lesssim& \left(\frac{M_3}{M}\right)^{-4s-2}\sup\limits_{\substack{J\subset I \\ \vert J \vert=M_3^{4s+2} }} \left | \int_J\int_\mathbb{R} u_{M_1}u_{M_2}u_{M_3}u_{M}\,dxdt \right|\\
\lesssim& \left(\frac{M_3}{M}  \right)^{-4s-2}\sup\limits_{\substack{J\subset I \\ \vert J \vert=M_3^{4s+2}}}\Vert \chi_Ju_{M_1}u_{M_2} \Vert_{L_t^2L_x^2}\Vert \chi_J u_M u_{M_3} \Vert_{L_t^2L_x^2}\\
\lesssim & \left(\frac{M_3}{M}\right)^{-4s-2}M_3^{-3}\left(\log M_3\right)^2 \\
&\times \sup\limits_{ \substack{J\subset I \\ \vert J \vert=M_3^{4s+2}} } \Vert \chi_J u_{M_1} \Vert_{U_S^2} \Vert \chi_J u_{M_2} \Vert_{U_S^2} \Vert \chi_J u_{M_3} \Vert_{V_S^2} \Vert \chi_J u_{M} \Vert_{V_S^2}.
\end{align*}
By combining above calculation, we focus on the following summation:
\begin{align*}
&\sum\limits_{M_1\lesssim M \ll M_2\approx M_3 } M^s\sup\limits_{\vert I \vert=M^{4s+2}} \Vert \chi_I P_M\left( u_{M_1}u_{M_2}u_{M_3}\right) \Vert_{DU_S^2}\\
&\lesssim \sum\limits_{M_1\lesssim M\ll M_2\approx M_3}M_3^{-6s-5+}M^{5s+2}M_1^{-s}\Vert u_{M_1} \Vert_{X^s} \Vert u_{M_2} \Vert_{X^s} \Vert u_{M_3} \Vert_{X^s}.
\end{align*}  
We can deal with above summation by using Schwarz inequality if $s> -\frac{5}{6}$.\\

\textbf{Case 3.} $M\ll M_1\leq M_2\approx M_3$.
Case 3 is the worst case in terms of the short time structure  because the interval summation loss is the largest. 
However, because case 3 is also a nonresonant interaction unlike the other two cases above, it is necessary to take advantage of nonresonant interaction to weaken this interval summation loss. 

To demonstrate the case 3, we need the following lemma

\begin{lemma}\label{lem:improved trilinear}
Let $-\frac{3}{4}< s < -\frac{1}{2}$ and $I$ be a time interval with $\vert I \vert=N^{4s+2}$.
Define 
\begin{align*}
f=&\sum\limits_{\substack{N_1,N_2,N_3:\\N_1,N_2,N_3\gg N}}\chi_I P_N \left( u_{N_1}\overline{u_{N_2}}u_{N_3}\right)\\
=&\sum\limits_{\substack{N_1,N_2,N_3:\\N_1,N_2,N_3\gg N}}\sum\limits_{\substack{J\subset I \\ \vert J \vert=N_{\max}^{4s+2}}} P_N \left(\chi_J u_{N_1} \chi_J \overline{u_{N_2} } \chi_J u_{N_3}   \right), \quad N_{\max}=\max\left\{ N_1,N_2,N_3\right\}.
\end{align*}
Here the $J$ summation is understood to be over a partition of $I$ into intervals $J$ of the indicated size.
Then, we have the estimates
\begin{align*}
\Vert Q_{\geq N^4 } f \Vert_{\dot{X}^{0,-\frac{1}{2},1}} \lesssim N^{-3-3s} \Vert u \Vert_{X^s}^3
\intertext{and}
\Vert Q_{\leq N^4} f \Vert_{L_t^1L_x^2+N^{-\frac{1}{4}  }  L_x^{\frac{4}{3}}L_t^1      }\lesssim N^{-3-3s } \Vert u \Vert_{X^s }^3. 
\end{align*}	
\end{lemma} 

\begin{remark}
	In fact, the same estimates hold if we replace $\overline{u_{N_2}}$ by $u_{N_2}$ and this case become easier. 	
\end{remark}

\begin{remark}\label{rem:-5/7 trilinear}
By using only the bilinear smoothing effect without Lemma \ref{lem:improved trilinear}, we can prove the tirlinear estimate up to $s>-\frac{5}{7}$. 

In case 3 $M\ll M_1\leq M_2\approx M_3$,
we consider two subcase $M\ll M_1\ll M_2\approx M_3$ and
 $M\ll M_1\approx M_2\approx M_3$. 

 Subcase 3.a. $M\ll M_1\ll M_2\approx M_3$. By using the same method as in case 2, we need to focus on the following summation
\begin{align*}
&\sum\limits_{M\ll M_1\leq M_2\approx M_3}M_3^{-6s-5+}M^{5s+2}M_1^{-s} \Vert u_{M_1}\Vert_{X^s} \Vert u_{M_2} \Vert_{X^s} \Vert u_{M_3} \Vert_{X^s}.
\end{align*} 
The summation is handled by using Schwarz inequality if $s> -\frac{5}{7}$. 

Subcase 3.b $ M\ll M_1\approx M_2 \approx M_3$. In order for the final output to be at frequency $M$, the two frequency $M_3$ factors must be $M_3$ separated. We may assume that $u_{M_2},u_{M_3}$ are $M_3$ separated. Therefore we obtain
\begin{align*}
\Vert \chi_J u_{M_2} u_{M_3} \Vert_{L_t^2L_x^2}\lesssim M_3^{-\frac{3}{2}}\Vert \chi_J u_{M_2} \Vert_{U_S^2} \Vert \chi_J u_{M_3} \Vert_{U_S^2}
\intertext{and}
\Vert \chi_J u_{M} u_{M_1} \Vert_{L_t^2L_x^2}\lesssim M_3^{-\frac{3}{2}}\log M_3\Vert \chi_J u_{M} \Vert_{V_S^2} \Vert \chi_J u_{M_1} \Vert_{V_S^2}.
\end{align*}
Therefore we can proceed as in Case 2. we focus on the following summation
\begin{align*}
&\sum\limits_{M\ll M_1\approx M_2\approx M_3}M_3^{-7s-5+}M^{5s+2} \Vert u_{M_1}\Vert_{X^s} \Vert u_{M_2} \Vert_{X^s} \Vert u_{M_3} \Vert_{X^s}.
\end{align*}
The summation is handled by using Schwarz inequality if $s> -\frac{5}{7}$.
\end{remark}

Observe that the embedding $\dot{X}^{0,\frac{1}{2},1} \hookrightarrow U^2_S$ implies the embedding $\dot{X}^{0,-\frac{1}{2},1}\hookrightarrow DU_S^2$. By using  $\dot{X}^{0,-\frac{1}{2},1}\hookrightarrow DU_S^2$, dual estimates $\eqref{eqn:Stricharz dual L-P piece N}$ and Lemma \ref{lem:improved trilinear} we have
\begin{align*}
N^s\Vert f \Vert_{DU_S^2}\lesssim N^{-3-2s}\Vert u_1 \Vert_{X^s} \Vert u_2 \Vert_{X^s} \Vert u_3 \Vert_{X^s}
\end{align*} 
Therefore the summation with respect to $N$ is easily handled. So we conclude the proof of Proposition \ref{prop:trilinear estimate}.
\end{proof}

\begin{proof}[Proof of Lemma \ref{lem:improved trilinear} ]
Let $\left(\tau_i,\xi_i \right)$ be the frequencies for each factor and let $\left(\tau,\xi  \right)$ be the resulting frequency. Then we have
\begin{align}\label{interaction}
\xi_1-\xi_2+\xi_3=\xi, \quad\quad \tau_1-\tau_2+\tau_3=\tau.
\end{align}
Under the relation $\left(\ref{interaction} \right)$, we have
\begin{align*}
\left(\tau_1-\xi_1^4 \right)-\left(\tau_2-\xi_2^4 \right)+\left(\tau_3-\xi_3^4 \right)-\left(\tau-\xi^4 \right)\\
=(\xi-\xi_1)(\xi-\xi_3)(\xi_1^2+\xi_2^2+\xi_3^2+\xi^2+2(\xi_1+\xi_3)^2).
\end{align*}
The size of frequencies $\left\{\xi, \xi_1,\xi_2,\xi_3  \right\}$ is $\left\{M,M_1,M_2,M_3  \right\}$ with $M\ll M_1 \lesssim M_2=M_3 $. Here we allow for a slight abuse of notation, as the highest $M_j$'s need not be equal but merely comparable. Therefore, we have
\begin{align*}
\vert \tau_1-\xi_1^4 \vert + \vert  \tau_2-\xi_2^4 \vert +\vert \tau_3-\xi_3^4 \vert +\vert \tau-\xi^4 \vert \gtrsim M_1 M_3 M_3^2 \quad \text{if $N_2=M_3$}
\intertext{and}
\vert \tau_1-\xi_1^4 \vert + \vert  \tau_2-\xi_2^4 \vert +\vert \tau_3-\xi_3^4 \vert +\vert \tau-\xi^4 \vert \gtrsim M_3 M_3 M_3^2 \quad \text{if $N_2=M_1$}
\end{align*}
Hence at least one modulation should be large. Therefore, to take advantage of this  modulation gain, we consider the following cases:\\

\noindent\textbf{Case 1.} All input factors have small modulation i.e., $\vert \tau_1-\xi_1^4 \vert, \vert  \tau_2-\xi_2^4 \vert,\vert \tau_3-\xi_3^4 \vert \ll M_1M_3^3$. Therefore, output has large modulation $\vert \tau-\xi^4 \vert \gtrsim M_1M_3^3$.\\
\textbf{Case 2.} There is at least one input which has large modulation i.e., $\vert \tau_1-\xi_1^4 \vert\gtrsim M_1M_3^3$ or $ \vert  \tau_2-\xi_2^4 \vert \gtrsim M_1M_3^3$ or $\vert \tau_3-\xi_3^4 \vert \gtrsim M_1M_3^3$.
Depending on which factor has the large modulation and on whether the conjugated factor $\overline{u}_{N_2}$ has lower frequency $M_1$ or not, we divide this case into six.\\

\textbf{Case 1.} This is 
when the input factors have small modulation and hence the output has large modulation. Depending on whether the conjugated factor has a lower frequency or not we divide this case into three.\\

\textbf{Subcase 1.a} First, we consider the first component of $f$ i.e., the conjugated factor $\overline{u}_{N_2}$ has the highest frequency $M_3$
\begin{align*}
f_1=&\sum\limits_{M\ll M_1\ll M_3} \sum\limits_{\substack{J\subset I \\ \vert J \vert=M_3^{4s+2} }} P_M \left(Q_{\ll M_1M_3^3 }\left(\chi_J  u_{M_3}\right) \overline{ Q_{\ll M_1M_3^3 } \left(\chi_J u_{M_3} \right)  }   Q_{\ll M_1M_3^3 }\left(\chi_J u_{M_1} \right)    \right)\\
=&\sum\limits_{M\ll M_1\ll M_3}\sum\limits_{\substack{ J\subset I \\ \vert J \vert=M_3^{4s+2} }} f_1^{M_1,M_3}.
\end{align*}
Since all input factors have small modulation $\ll M_1M_3^3$, we have
\begin{align*}
\vert \tau-\xi^4\vert=&\left\vert\left(\tau_1-\xi_1^4\right)-\left(\tau_2-\xi_2^4\right)+\left(\tau_3-\xi_3^4\right)+\left(\xi_1^4-\xi_2^4+\xi_3^4-\xi^4\right)\right\vert\\
\approx& M_1M_3^3.
\end{align*}
Hence $f_1^{M_1,M_3}$ is localized at modulation $M_1M_3^3 \gg M^4 $.

For the triple product $  v_{M_3} \overline{v_{M_3} } v_{M_1}   $, by using the energy bound for $v_{M_3}$ and the bilinear estimate for $v_{M_3}v_{M_1}$, we have
\begin{align*}
\Vert   v_{M_3} \overline{v_{M_3} } v_{M_1}    \Vert_{L_t^2L_x^1} \lesssim M_3^{-\frac{3}{2}} \Vert v_{M_3} \Vert_{U_{S}^2} \Vert v_{M_3} \Vert_{U_{S}^2} \Vert v_{M_1} \Vert_{U_{S}^2}.
\end{align*} 
Applying $P_M$ and the Bernstein's inequality, we obtain
\begin{align}\label{triple product}
\Vert P_M\left(  v_{M_3} \overline{v_{M_3} } v_{M_1}  \right)  \Vert_{L_t^2L_x^2} \lesssim M_3^{-\frac{3}{2}}M^{\frac{1}{2}} \Vert v_{M_3} \Vert_{U_{S}^2} \Vert v_{M_3} \Vert_{U_{S}^2} \Vert v_{M_1} \Vert_{U_{S}^2}.
\end{align}
In order to bound $f_1$, we need to consider the interval summation losss $M^{4s+2}M_3^{-4s-2}$.	
Therefore, by using $\eqref{triple product}$ and uniform boundedness property $Q_N : U_s^p \to U_s^p$, we obtain
\begin{align*} 
\sum\limits_{\substack{ J\subset I \\ \vert J \vert=M_3^{4s+2} }}\Vert f_1^{M_1,M_3} \Vert_{L_{t,x}^2} \lesssim M^{4s+2}M_3^{-4s-2}M_1^{-s}M_3^{-2s}M^{\frac{1}{2}}M_3^{-\frac{3}{2}} \Vert u_{M_3}\Vert_{X^s}\Vert u_{M_3}\Vert_{X^s}\Vert u_{M_1}\Vert_{X^s}. 
\end{align*}	
Since $f_1^{M_1,M_3}$ is localized at modulation $M_1M_3^3$, we have
\begin{align*}
\sum\limits_{\substack{ J\subset I \\ \vert J \vert=M_3^{4s+2} }}\Vert f_1^{M_1,M_3} \Vert_{\dot{X}^{0,-\frac{1}{2},1} } \lesssim M_3^{-6s-5}M_1^{-s-\frac{1}{2}}
M^{4s+\frac{5}{2}}  \Vert u_{M_3}\Vert_{X^s}\Vert u_{M_3}\Vert_{X^s}\Vert u_{M_1}\Vert_{X^s}.
\end{align*}
The summation with respect to the dyadic numbers $M_1$ and $M_3$ is handled if $s\geq -\frac{11}{14}$. 
As a result, we conclude
\begin{align*}
\Vert f_1 \Vert_{\dot{X}^{0,-\frac{1}{2},1} }\lesssim& \sum\limits_{M\ll M_1\ll M_3}\sum\limits_{\substack{ J\subset I \\ \vert J \vert=M_3^{4s+2} }} \Vert f_1^{M_1,M_3}\Vert_{\dot{X}^{0,-\frac{1}{2},1}} \\
\lesssim& M^{-3-3s}\Vert u \Vert_{X^s}^3.
\end{align*}

\textbf{Subcase 1.b} The second component of $f$ is that the conjugated factor $\overline{u}_{N_2}$ has lower frequency $M_1$
\begin{align*}
f_2=&\sum\limits_{M\ll M_1\ll M_3} \sum\limits_{\substack{J\subset I \\ \vert J \vert=M_3^{4s+2}}} P_M \left(Q_{\ll M_1M_3^3 }\left(\chi_J  u_{M_3}\right) \overline{ Q_{\ll M_1M_3^3 } \left(\chi_Ju_{M_1} \right)  }   Q_{\ll M_1M_3^3 }\left(\chi_J u_{M_3} \right)    \right)\\
=&\sum\limits_{M\ll M_1\ll M_3}\sum\limits_{\substack{ J\subset I \\ \vert J \vert=M_3^{4s+2} }}f_2^{M_1,M_3}.
\end{align*}
Then $f_2^{M_1,M_3}$ is localized at modulation $M_3^4\gg M^4$.
 
By using Bernstein, energy bound and bilinear estimate, we have the following $L_{t,x}^2$ bound:
\begin{align}\label{triplebound 2}
\Vert P_M\left(  v_{M_3} \overline{v_{M_1} } v_{M_3}  \right)  \Vert_{L_t^2L_x^2} \lesssim M_3^{-\frac{3}{2}}M^{\frac{1}{2}} \Vert v_{M_3} \Vert_{U_{S}^2} \Vert v_{M_3} \Vert_{U_{S}^2} \Vert v_{M_1} \Vert_{U_{S}^2}.
\end{align}
By considering the interval summation losses and using $\eqref{triplebound 2}$, we obtain
\begin{align*}
\sum\limits_{\substack{ J\subset I \\ \vert J \vert=M_3^{4s+2} }}\Vert f_2^{M_1,M_3} \Vert_{L_{t,x}^2}\lesssim M^{4s+2}M_3^{-4s-2}M_1^{-s}M_3^{-2s}M^{\frac{1}{2}}M_3^{-\frac{3}{2}} \Vert u_{M_3} \Vert_{X^s} \Vert u_{M_3} \Vert_{X^s} \Vert u_{M_1} \Vert_{X^s}.
\end{align*} 
Since $f_2^{M_1,M_3}$ has modulation $M_3^4$, we have
\begin{align*}
\sum\limits_{\substack{ J\subset I \\ \vert J \vert=M_3^{4s+2} }}\Vert f_2^{M_1,M_3} \Vert_{\dot{X}^{0,-\frac{1}{2},1}} \lesssim M_3^{-6s-\frac{11}{2}}M_1^{-s}M^{4s+\frac{5}{2}} \Vert u_{M_3} \Vert_{X^s} \Vert u_{M_3} \Vert_{X^s} \Vert u_{M_1} \Vert_{X^s}
\end{align*}
which is summable with respect to dyadic number $M_1,M_3$ if $s\geq -\frac{11}{14}$.
After summation, we obtain 
\begin{align*}
\Vert f_2 \Vert_{\dot{X}^{0,-\frac{1}{2},1} }\lesssim M^{-3-3s}\Vert u \Vert_{X^s}^3.
\end{align*}

\textbf{Subcase 1.c} The third component of $f$ is 
\begin{align*}
f_3=&\sum\limits_{M\ll M_1=M_3}\sum\limits_{\substack{J\subset I\\ \vert J \vert=M_3^{4s+2} }} P_M \left(Q_{\ll M_3^4 }\left(\chi_J  u_{M_3}\right) \overline{ Q_{\ll M_3^4 } \left(\chi_Ju_{M_3} \right)  }   Q_{\ll M_3^4 }\left(\chi_J u_{M_3} \right)    \right)\\
=&\sum\limits_{M\ll M_1 =M_3}\sum\limits_{\substack{ J\subset I \\ \vert J \vert=M_3^{4s+2} }}f_3^{M_3}.
\end{align*}
Then $f_3^{M_1,M_3}$ has modulation $M_3^4 \gg M^4$.

Observe that $  \xi_1-\xi_2+\xi_3 =\xi   $ with $\vert \xi \vert= M, \vert \xi_i \vert= M_3, i=1,2,3$. In order for the output to be at a low frequency $M$, two of the frequencies $\xi_1,-\xi_2,\xi_3$ should be $M_3$ separated. Therefore, we use the bilinear estimates for those two factors and the energy bound for the remainig factor to obtain
\begin{align*}
\Vert P_M\left(  v_{M_3} \overline{v_{M_1} } v_{M_3}  \right)  \Vert_{L_t^2L_x^2} \lesssim M_3^{-\frac{3}{2}}M^{\frac{1}{2}} \Vert v_{M_3} \Vert_{U_{S}^2} \Vert v_{M_3} \Vert_{U_{S}^2} \Vert v_{M_3} \Vert_{U_{S}^2}.
\end{align*} 
Hence, we obtain as in Case 1.a, Case 1.b
\begin{align*}
\sum\limits_{\substack{ J\subset I \\ \vert J \vert=M_3^{4s+2} }}\Vert f_3^{M_3} \Vert_{\dot{X}^{0,-\frac{1}{2},1}  } \lesssim M_3^{-7s-\frac{11}{2}}M^{4s+\frac{5}{2}}  \Vert u_{M_3}\Vert_{X^s}\Vert u_{M_3}\Vert_{X^s}\Vert u_{M_1}\Vert_{X^s},
\end{align*} 
which is summable with respect to $M_3$ if $s \geq -\frac{11}{14}$.
After summation with respect to $M_3$, we obtain 
\begin{align*}
\Vert f_3 \Vert_{\dot{X}^{0,-\frac{1}{2},1} }\lesssim M^{-3-3s}\Vert u \Vert_{X^s}^3.
\end{align*}

\textbf{Case 2.} In this case, there is at least one input which has large modulation. Depending on which factor has the large modulation and on whether the conjugated factor $\overline{u}_{N_2}$ has lower frequency or not, we divide this case into six.\\

\textbf{Subcase 2.a} We consider 
\begin{align*}
f_4
=&\sum\limits_{M\ll M_1 \ll M_3 } \sum\limits_{\substack{J\subset I \\ \vert J \vert=M_3^{4s+2} }} P_M \left( Q_{\gtrsim M_1M_3^3} \left(\chi_J u_{M_3} \right) \overline{\chi_J u_{M_3}}    \chi_J u_{M_1}  \right)\\
=& \sum\limits_{M\ll M_1 \ll M_3}  \sum\limits_{\substack{ J\subset I \\ \vert J \vert=M_3^{4s+2} }}f_4^{M_1,M_3},\\
f_5=& \sum\limits_{M\ll M_1\ll M_3}\sum\limits_{\substack{ J\subset I \\ \vert J \vert=M_3^{4s+2} }} P_M \left( \chi_J u_{M_3}   \overline{Q_{\gtrsim M_1M_3^3} \left(\chi_J u_{M_3}   \right)} \chi_J u_{M_1}  \right)
\end{align*}
Since the two terms $f_4$ and $f_5$ are similar, we may consider the first one $f_4$.

By using the embedding $V_A^2 \hookrightarrow \dot{X}^{0,\frac{1}{2},\infty}$ for the first factor and the bilinear estimates for the remainig terms, we have
\begin{align}\label{triple L^1 bound}
\Vert \left(Q_{\gtrsim M_1M_3^3}  v_{M_3}\right) \overline{v_{M_3}} v_{M_1} \Vert_{L_{t,x}^1}\lesssim M_1^{-\frac{1}{2}}M_3^{-3} \Vert v_{M_3} \Vert_{U^2_{S}} \Vert v_{M_3} \Vert_{U^2_{S}} \Vert v_{M_1} \Vert_{U^2_{S}}.
\end{align}

\textbf{Low modulation output.} By Bernstein's inequality, we have
\begin{align*}
\Vert P_M \left( \left(Q_{\gtrsim M_1M_3^3}  v_{M_3}\right) \overline{v_{M_3}} v_{M_1} \right)\Vert_{L_t^1 L_x^2} \lesssim M^{\frac{1}{2}} M_1^{-\frac{1}{2}}M_3^{-3} \Vert v_{M_3} \Vert_{U^2_{S}} \Vert u_{v_3} \Vert_{U^2_{S}} \Vert v_{M_1} \Vert_{U^2_{S}}.
\end{align*}
By considering the interval summation loss, we have
\begin{align*}
&\sum\limits_{\substack{ J\subset I \\ \vert J \vert=M_3^{4s+2} }}\Vert f_4^{M_1,M_3} \Vert_{L_t^1L_x^2}\\ 
&\lesssim M_1^{-s}M_3^{-2s} M^{4s+2}M_3^{-4s-2}M^{\frac{1}{2}}M_1^{-\frac{1}{2}}M_3^{-3}  \Vert u_{M_3}\Vert_{X^s}\Vert u_{M_3}\Vert_{X^s}\Vert u_{M_1}\Vert_{X^s}.
\end{align*}
This summation with respect to $M_1,M_3$ can be dealt with $s\geq -\frac{11}{14}$.
After summation with respect to $M_1, M_3$, we obtain 
\begin{align*}
\Vert Q_{\leq M^4} f_4 \Vert_{L_t^1L_x^2}
\lesssim& \Vert f_4 \Vert_{L_t^1L_x^2}\\
\lesssim& \sum\limits_{M\ll M_1 \ll M_3}\sum\limits_{\substack{ J\subset I \\ \vert J \vert=M_3^{4s+2} }} \Vert f_4^{M_1,M_3}\Vert_{L_t^1L_x^2} \\
\lesssim& M^{-3-3s}\Vert u \Vert_{X^s}^3.
\end{align*}

To estimate $ \Vert Q_{\geq M^4} f_4\Vert_{\dot{X}^{0,-\frac{1}{2},1}} $, we need to decompose $ Q_{\geq M^4} f_4$ into the following intermediate modulation output and high modulation output:
\begin{align*}
Q_{\geq M^4}f_4=&\sum\limits_{\substack{M_1,M_3:\\ M\ll M_1\ll M_3 }}\sum\limits_{\substack{ J\subset I \\ \vert J \vert=M_3^{4s+2} }}Q_{\geq M^4}f_4^{M_1,M_3}\\
=&\sum\limits_{\substack{M_1,M_3:\\ M\ll M_1\ll M_3 }} \sum\limits_{\substack{ J\subset I \\ \vert J \vert=M_3^{4s+2} }} Q_{M^4 \leq \sigma \lesssim M_1M_3^3}f_4^{M_1,M_3}+\sum\limits_{\substack{M_1,M_3:\\M\ll M_1 \ll M_3}} \sum\limits_{\substack{ J\subset I \\ \vert J \vert=M_3^{4s+2} }} Q_{\gg M_1M_3^3}f_4^{M_1,M_3}.
\end{align*}

\textbf{Intermediate modulation output.} In this case, we consider the $\dot{X}^{0,-\frac{1}{2},1}$ estimate at modulation $ M^4 \leq \sigma \lesssim M_1 M_3^3$.
By using $\eqref{triple L^1 bound}$ and Bernstein inequality we have
\begin{align*}
\Vert Q_{\sigma}P_M \left( Q_{ \gtrsim M_1M_3^3}v_{M_3}\overline{v_{M_3}} v_{M_1}  \right) \Vert_{L_{t,x}^2} \lesssim \left( M \sigma\right)^{\frac{1}{2}}M_1^{-\frac{1}{2}}M_3^{-3}  \Vert v_{M_3} \Vert_{U^2_{S}} \Vert v_{M_3} \Vert_{U^2_{S}} \Vert v_{M_1} \Vert_{U^2_{S}}.
\end{align*}
By considering the interval summation, we have
\begin{align*}
&\sum\limits_{\substack{ J\subset I \\ \vert J \vert=M_3^{4s+2} }}\Vert Q_\sigma  f_4^{M_1,M_3} \Vert_{L_{t,x}^2}\\
\lesssim& \sum\limits_{\substack{J\subset I \\ \vert J \vert=M_3^{4s+2}}} \Vert  Q_\sigma P_M \left( Q_{\gtrsim M_1M_3^3} \left(\chi_J u_{M_3} \right) \overline{\chi_J u_{M_3}}    \chi_J u_{M_1}  \right) \Vert_{L_{t,x}^2} \\
\lesssim& \; M^{4s+2}M_3^{-4s-2} \left( M \sigma\right)^{\frac{1}{2}}M_1^{-\frac{1}{2}}M_3^{-3}  \sup\limits_{\substack{ J\subset I \\ \vert J \vert=M_3^{4s+2} }} \Vert \chi_J u_{M_3} \Vert_{U^2_{S}} \Vert \chi_J u_{M_3} \Vert_{U^2_{S}} \Vert \chi_J u_{M_1} \Vert_{U^2_{S}}\\
\lesssim& \;\sigma^{\frac{1}{2}} M^{4s+\frac{5}{2}}M_1^{-\frac{1}{2}-s}M_3^{-6s-5} \Vert u_{M_3}\Vert_{X^s}\Vert u_{M_3}\Vert_{X^s}\Vert u_{M_1}\Vert_{X^s},
\end{align*}
or equivalently
\begin{align*}
\sum\limits_{\substack{ J\subset I \\ \vert J \vert=M_3^{4s+2} }}\Vert Q_\sigma  f_4^{M_1,M_3} \Vert_{\dot{X}^{0,-\frac{1}{2},1}} \lesssim & \; M^{4s+\frac{5}{2}}M_1^{-\frac{1}{2}-s}M_3^{-6s-5} \Vert u_{M_3}\Vert_{X^s}\Vert u_{M_3}\Vert_{X^s}\Vert u_{M_1}\Vert_{X^s}.
\end{align*}
By summing over $ M^4 \leq \sigma \lesssim M_1M_3^3 $, we obtain
\begin{align*}
&\sum\limits_{ M^4 \leq \sigma \lesssim M_1 M_3^3 }\sum\limits_{\substack{ J\subset I \\ \vert J \vert=M_3^{4s+2} }} \Vert Q_\sigma  f_4^{M_1,M_3} \Vert_{\dot{X}^{0,-\frac{1}{2},1}}\\
\lesssim & \; M^{4s+\frac{5}{2}}M_1^{-\frac{1}{2}-s}M_3^{-6s-5}\ln\left(\frac{M_3}{M}\right) \Vert u_{M_3}\Vert_{X^s}\Vert u_{M_3} \Vert_{X^s}\Vert u_{M_1}\Vert_{X^s}.
\end{align*}
Hence, the summation with respect to $M_1,M_3$ can be handled with $s>-\frac{11}{14}$.
After summation with respect to $M_1, M_3$, we obtain 
\begin{align}\label{intermediate modulation}
\sum\limits_{M \ll M_1 \ll M_3}\sum\limits_{\substack{ J\subset I \\ \vert J \vert=M_3^{4s+2} }} \Vert Q_{   M^4 \leq \sigma \lesssim  M_1 M_3^3} f_4^{M_1,M_3} \Vert_{\dot{X}^{0,-\frac{1}{2},1}}
\lesssim M^{-3-3s}\Vert u \Vert_{X^s}^3.
\end{align}

\textbf{High modulation output.} In this case, we need to estimate the output localized at modulations $\sigma \gg M_1 M_3^3$. In order to obtain such an output at least one of the inputs should have modulation at least $\sigma$. We assume that the lower frequency factor has modulation $\sigma$. This is the worst case in terms of bilinear separation. 

For the product $  v_{M_3} \overline{v_{M_3} } Q_\sigma v_{M_1}   $, we use the embedding $V_S^2 \hookrightarrow \dot{X}^{0,\frac{1}{2},1}$ for $Q_\sigma v_{M_1}$ and the bilinear estimate for $v_{M_3}\overline{v_{M_3}}$.
Observe that in order for the final output to be at frequency $M$, the two factors $v_{M_3} \overline{v_{M_3}}$ should be frequency localized in $M_1$ separated intervals of length $\vert \xi_1-\xi_2\vert \approx M_1$.
Therefore, by using the high modulation bound for $Q_\sigma v_{M_1}$ and the bilinear estimate for $v_{M_3}\overline{v_{M_3}}$ with bilinear gain $\left(M_1M_3^2\right)^{-\frac{1}{2}}$, we have
\begin{align*}
\Vert   v_{M_3} \overline{v_{M_3} } Q_\sigma v_{M_1}    \Vert_{L_t^1 L_x^1} \lesssim M_1^{-\frac{1}{2}}M_3^{-1} \sigma^{-\frac{1}{2}} \Vert v_{M_3} \Vert_{U_{S}^2} \Vert v_{M_3} \Vert_{U_{S}^2} \Vert v_{M_1} \Vert_{U_{S}^2}.
\end{align*} 
Applying $ Q_\sigma P_M  $ and the Bernstein's inequality, we obtain
\begin{align}
\Vert Q_\sigma P_M \left(  v_{M_3} \overline{v_{M_3} } Q_{\sigma}v_{M_1}  \right)  \Vert_{L_t^2L_x^2} \lesssim \left(\sigma M\right)^{\frac{1}{2}} M_1^{-\frac{1}{2}}M_3^{-1} \sigma^{-\frac{1}{2}} \Vert v_{M_3} \Vert_{U_{S}^2} \Vert v_{M_3} \Vert_{U_{S}^2} \Vert v_{M_1} \Vert_{U_{S}^2}.
\end{align}
By considering the interval summation, we have
\begin{align*}
&\sum\limits_{\substack{ J\subset I \\ \vert J \vert=M_3^{4s+2} }}\Vert Q_{\gg M_1M_3^3}  f_4^{M_1,M_3} \Vert_{\dot{X}^{0,-\frac{1}{2},1}}\\
\lesssim& \sum\limits_{\substack{J\subset I \\ \vert J \vert=M_3^{4s+2}}} \Vert  Q_{\gg M_1M_3^3} P_M \left( Q_{\gtrsim M_1M_3^3} \left(\chi_J u_{M_3} \right) \overline{\chi_J u_{M_3}}    \chi_J u_{M_1}  \right) \Vert_{\dot{X}^{0,-\frac{1}{2},1}}\\
\lesssim&  \sum\limits_{\substack{J\subset I \\ \vert J \vert=M_3^{4s+2}}}  \sum\limits_{\sigma \gtrsim M_1M_3^3 }  \Vert  Q_{\sigma } P_M \left( Q_{\gtrsim M_1M_3^3} \left(\chi_J u_{M_3} \right) \overline{\chi_J u_{M_3}}    \chi_J u_{M_1}  \right) \Vert_{\dot{X}^{0,-\frac{1}{2},1}}\\
\lesssim& \;M^{4s+2}M_3^{-4s-2} \left(M_1 M_3^3 \right)^{-\frac{1}{2}} \left(M^{\frac{1}{2}}M_1^{-\frac{1}{2}}M_3^{-1}\right) M_1^{-s}M_3^{-2s} \\
& \qquad\qquad\qquad \times \Vert u_{M_3}\Vert_{X^s} \Vert u_{M_3} \Vert_{X^s}\Vert u_{M_1}\Vert_{X^s}\\
=&M_3^{-6s-\frac{9}{2}}M_1^{-1-s}M^{4s+\frac{5}{2}} \Vert u_{M_3}\Vert_{X^s} \Vert u_{M_3} \Vert_{X^s}\Vert u_{M_1}\Vert_{X^s}.
\end{align*} 
Therefore, the summation with respect to $M_1,M_3$ is handled if $s \geq -\frac{3}{4}$.
After summation with respect to $M_1, M_3$, we obtain 
\begin{align}\label{high modulation}
\sum\limits_{M \ll M_1 \ll M_3} \sum\limits_{\substack{ J\subset I \\ \vert J \vert=M_3^{4s+2} }} \Vert Q_{   \gg M_1M_3^3} f_4^{M_1,M_3} \Vert_{\dot{X}^{0,-\frac{1}{2},1}}
\lesssim M^{-3-3s}\Vert u \Vert_{X^s}^3.
\end{align}
Therefore, by combining the intermediate modulation case $\eqref{intermediate modulation}$ and high modulation case $\eqref{high modulation}$, we have
\begin{align*}
\Vert Q_{\geq M^4} f_4 \Vert \lesssim M^{-3-3s} \Vert u \Vert_{X^s}^3.
\end{align*}
 
\textbf{Subcase 2.b.} In this case, the low frequency factor has high modulation. Here, we consider
\begin{align*}
f_6=&\sum\limits_{M\ll M_1 \ll M_3 } \sum\limits_{\substack{J\subset I \\ \vert J \vert=M_3^{4s+2} }} P_M \left(  \chi_J u_{M_3}  \overline{\chi_J u_{M_3}} Q_{\gtrsim M_1M_3^3} \left(\chi_J u_{M_1} \right)    \right)\\
=& \sum\limits_{M\ll M_1 \ll M_3}\sum\limits_{\substack{ J\subset I \\ \vert J \vert=M_3^{4s+2} }} f_6^{M_1,M_3}.
\end{align*} 
For the $  v_{M_3} \overline{v_{M_3} } Q_{\gtrsim M_1M_3^3} v_{M_1}   $, we use the embedding $V_S^2 \hookrightarrow \dot{X}^{0,\frac{1}{2},1}$ for $Q_{\gtrsim M_1M_3^3} v_{M_1}$ and the bilinear estimate for $v_{M_3}\overline{v_{M_3}}$.
Observe that in order for the final output to be at frequency $M$, the two factors $v_{M_3} \overline{v_{M_3}}$ should be frequency localized in $M_1$ separated intervals of length $\vert \xi_1-\xi_2\vert \approx M_1$.
Therefore, by using the high modulation bound for $Q_{\gtrsim M_1M_3^3} v_{M_1}$ and the bilinear estimate for $v_{M_3}\overline{v_{M_3}}$ with bilinear gain $\left(M_1M_3^2\right)^{-\frac{1}{2}}$, we have
\begin{align*}
\Vert   v_{M_3} \overline{v_{M_3} } Q_{\gtrsim M_1M_3^3} v_{M_1}    \Vert_{L_t^1 L_x^1} \lesssim & M_1^{-\frac{1}{2}}M_3^{-1} \left(M_1M_3^3\right)^{-\frac{1}{2}} \Vert v_{M_3} \Vert_{U_{S}^2} \Vert v_{M_3} \Vert_{U_{S}^2} \Vert v_{M_1} \Vert_{U_{S}^2}\\
=&M_1^{-1}M_3^{-\frac{5}{2}} \Vert v_{M_3} \Vert_{U_{S}^2} \Vert v_{M_3} \Vert_{U_{S}^2} \Vert v_{M_1} \Vert_{U_{S}^2}.
\end{align*} 

\textbf{Low modulation output.} By Bernstein's inequality, we have
\begin{align}\label{low frequency high modulation}
\Vert P_M \left( \overline{v_{M_3}} v_{M_3}  \left(Q_{\gtrsim M_1M_3^3}  v_{M_1}\right)  \right)\Vert_{L_t^1 L_x^2} \lesssim M^{\frac{1}{2}} M_1^{-1}M_3^{-\frac{5}{2}} \Vert v_{M_3} \Vert_{U^2_{S}} \Vert v_{M_3} \Vert_{U^2_{S}} \Vert v_{M_1} \Vert_{U^2_{S}}.
\end{align}
By considering the interval summation loss, we have
\begin{align*}
&\sum\limits_{\substack{ J\subset I \\ \vert J \vert=M_3^{4s+2} }}\Vert f_6^{M_1,M_3} \Vert_{L_t^1L_x^2}\\ 
\lesssim & M_1^{-s}M_3^{-2s} M^{4s+2}M_3^{-4s-2}M^{\frac{1}{2}}M_1^{-1}M_3^{-\frac{5}{2}}  \Vert u_{M_3}\Vert_{X^s}\Vert u_{M_3}\Vert_{X^s}\Vert u_{M_1}\Vert_{X^s}\\
=& M^{4s+\frac{5}{2}}M_3^{-6s-\frac{9}{2}}M_1^{-s-1}  \Vert u_{M_3}\Vert_{X^s}\Vert u_{M_3}\Vert_{X^s}\Vert u_{M_1}\Vert_{X^s}.
\end{align*}
This summation with respect to $M_1,M_3$ can be dealt with $s\geq -\frac{3}{4}$.
After summation with respect to $M_1, M_3$, we obtain 
\begin{align*}
\Vert Q_{\leq M^4} f_6 \Vert_{L_t^1L_x^2}
\lesssim& \Vert f_6 \Vert_{L_t^1L_x^2}\\
\lesssim& \sum\limits_{M\ll M_1 \ll M_3}\sum\limits_{\substack{ J\subset I \\ \vert J \vert=M_3^{4s+2} }} \Vert f_6^{M_1,M_3}\Vert_{L_t^1L_x^2} \\
\lesssim& M^{-3-3s}\Vert u \Vert_{X^s}^3.
\end{align*}

To estimate $ \Vert Q_{\geq M^4} f_6\Vert_{\dot{X}^{0,-\frac{1}{2},1}} $, we need to decompose $ Q_{\geq M^4} f_6$ into the following intermediate modulation output and high modulation output:
\begin{align*}
Q_{\geq M^4}f_6=&\sum\limits_{\substack{M_1,M_3:\\ M\ll M_1\ll M_3 }}\sum\limits_{\substack{ J\subset I \\ \vert J \vert=M_3^{4s+2} }} Q_{\geq M^4}f_6^{M_1,M_3}\\
=&\sum\limits_{\substack{M_1,M_3:\\ M\ll M_1\ll M_3 }}\sum\limits_{\substack{ J\subset I \\ \vert J \vert=M_3^{4s+2} }} Q_{M^4 \leq \sigma \lesssim M_1M_3^3}f_6^{M_1,M_3}+\sum\limits_{\substack{M_1,M_3:\\M\ll M_1 \ll M_3}}\sum\limits_{\substack{ J\subset I \\ \vert J \vert=M_3^{4s+2} }}Q_{\gg M_1M_3^3}f_6^{M_1,M_3}.
\end{align*}

\textbf{Intermediate modulation output.} We consider the $\dot{X}^{0,-\frac{1}{2},1}$ estimate at modulation $ M^4 \leq \sigma \lesssim M_1 M_3^3$.
By using $\eqref{low frequency high modulation}$ and Bernstein's inequality we have
\begin{align*}
\Vert Q_{\sigma}P_M \left(\left( \overline{v_{M_3}} v_{M_3}  \right) Q_{ \gtrsim M_1M_3^3}v_{M_1}\right) \Vert_{L_{t,x}^2} \lesssim \left( M \sigma\right)^{\frac{1}{2}}M_1^{-1}M_3^{-\frac{5}{2}}  \Vert v_{M_3} \Vert_{U^2_{S}} \Vert v_{M_3} \Vert_{U^2_{S}} \Vert v_{M_1} \Vert_{U^2_{S}}.
\end{align*}
By considering the interval summation, we have
\begin{align*}
\sum\limits_{\substack{ J\subset I \\ \vert J \vert=M_3^{4s+2} }}\Vert Q_\sigma  f_6^{M_1,M_3} \Vert_{L_{t,x}^2}\lesssim& \sum\limits_{\substack{J\subset I \\ \vert J \vert=M_3^{4s+2}}} \Vert   Q_{\sigma}P_M \left(\left( \chi_J\overline{u_{M_3}} \chi_Ju_{M_3}  \right) Q_{ \gtrsim M_1M_3^3}\chi_J u_{M_1}\right) \Vert_{L_{t,x}^2} \\
\lesssim& \; M^{4s+2}M_3^{-4s-2} \left( M \sigma\right)^{\frac{1}{2}}M_1^{-1}M_3^{-\frac{5}{2}}\\
 \times&  \sup\limits_{\substack{ J\subset I \\ \vert J \vert=M_3^{4s+2} }} \Vert \chi_J u_{M_3} \Vert_{U^2_{S}} \Vert \chi_J u_{M_3} \Vert_{U^2_{S}} \Vert \chi_J u_{M_1} \Vert_{U^2_{S}}\\
\lesssim& \;\sigma^{\frac{1}{2}} M^{4s+\frac{5}{2}}M_1^{-1-s}M_3^{-6s-\frac{9}{2}} \Vert u_{M_3}\Vert_{X^s}\Vert u_{M_3}\Vert_{X^s}\Vert u_{M_1}\Vert_{X^s},
\intertext{or equivalently}
\sum\limits_{\substack{ J\subset I \\ \vert J \vert=M_3^{4s+2} }}\Vert Q_\sigma  f_6^{M_1,M_3} \Vert_{\dot{X}^{0,-\frac{1}{2},1}} \lesssim & \; M^{4s+\frac{5}{2}}M_1^{-1-s}M_3^{-6s-\frac{9}{2}} \Vert u_{M_3}\Vert_{X^s}\Vert u_{M_3}\Vert_{X^s}\Vert u_{M_1}\Vert_{X^s}.
\end{align*}
By summing over $ M^4 \leq \sigma \lesssim M_1M_3^3 $, we obtain
\begin{align*}
\sum\limits_{\substack{ J\subset I \\ \vert J \vert=M_3^{4s+2} }}\sum\limits_{ M^4 \leq \sigma \lesssim M_1M_3^3 }\Vert Q_\sigma  f_6^{M_1,M_3} \Vert_{\dot{X}^{0,-\frac{1}{2},1}} \lesssim & \; M^{4s+\frac{5}{2}}M_1^{-1-s}M_3^{-6s-\frac{9}{2}}\ln \left(\frac{M_3}{M}\right)\\
\times& \Vert u_{M_3}\Vert_{X^s}\Vert u_{M_3} \Vert_{X^s}\Vert u_{M_1}\Vert_{X^s}.
\end{align*}
Hence, the summation with respect to $M_1,M_3$ can be handled with $s>-\frac{3}{4}$.
After summation with respect to $M_1, M_3$, we obtain 
\begin{align}\label{intermediate modulation 2}
\sum\limits_{M \ll M_1 \ll M_3}\sum\limits_{\substack{ J\subset I \\ \vert J \vert=M_3^{4s+2} }} \Vert Q_{   M^4 \leq \sigma \lesssim M_1M_3^3} f_6^{M_1,M_3} \Vert_{\dot{X}^{0,-\frac{1}{2},1}}
\lesssim M^{-3-3s}\Vert u \Vert_{X^s}^3.
\end{align}

\textbf{High modulation output.} In this case, we need to estimate the output localized at modulations $\sigma \gg M_1M_3^3$. In order to obtain such an output at least one of the inputs should have modulation at least $\sigma$. We assume that the lower frequency factor has modulation $\sigma$. This is the worst case in terms of bilinear separation. 

For the product $  v_{M_3} \overline{v_{M_3} } Q_\sigma v_{M_1}   $, we use the embedding $V_S^2 \hookrightarrow \dot{X}^{0,\frac{1}{2},1}$ for $Q_\sigma v_{M_1}$ and the bilinear estimate for $v_{M_3}\overline{v_{M_3}}$.
Observe that in order for the final output to be at frequency $M$, the two factors $v_{M_3} \overline{v_{M_3}}$ should be frequency localized in $M_1$ separated intervals of length $\vert \xi_1-\xi_2\vert \approx M_1$.
Therefore, by using the high modulation bound for $Q_\sigma v_{M_1}$ and the bilinear estimate for $v_{M_3}\overline{v_{M_3}}$ with bilinear gain $\left(M_1M_3^2\right)^{-\frac{1}{2}}$, we have
\begin{align*}
\Vert   v_{M_3} \overline{v_{M_3} } Q_\sigma v_{M_1}    \Vert_{L_t^1 L_x^1} \lesssim M_1^{-\frac{1}{2}}M_3^{-1} \sigma^{-\frac{1}{2}} \Vert v_{M_3} \Vert_{U_{S}^2} \Vert v_{M_3} \Vert_{U_{S}^2} \Vert v_{M_1} \Vert_{U_{S}^2}.
\end{align*} 
Applying $ Q_\sigma P_M  $, Bernstein's inequality, 
and considering the interval summation, we have
\begin{align*}
\sum\limits_{\substack{ J\subset I \\ \vert J \vert=M_3^{4s+2} }}\Vert Q_{\gg M_1M_3^3}  f_6^{M_1,M_3} \Vert_{L_{t,x}^2}\lesssim& \sum\limits_{\substack{J\subset I \\ \vert J \vert=M_3^{4s+2}}} \Vert  Q_{\gg M_1M_3^3} P_M \left( \chi_J u_{M_3}  \overline{\chi_J u_{M_3}} Q_{\gtrsim M_1M_3^3} \left(\chi_J u_{M_1} \right)  \right) \Vert_{L_{t,x}^2}\\
\lesssim&  \sum\limits_{\substack{J\subset I \\ \vert J \vert=M_3^{4s+2}}}  \sum\limits_{\sigma \gtrsim M_1M_3^3 }  \Vert  Q_{\sigma } P_M \left(\chi_J u_{M_3}  \overline{\chi_J u_{M_3}} Q_{\gtrsim M_1M_3^3} \left(\chi_J u_{M_1} \right)  \right) \Vert_{L_{t,x}^2},
\end{align*}
or equivalently
\begin{align*}
\sum\limits_{\substack{ J\subset I \\ \vert J \vert=M_3^{4s+2} }}\Vert Q_{\gg M_1M_3^3}  f_6^{M_1,M_3} \Vert_{\dot{X}^{0,-\frac{1}{2},1}}\lesssim& \;M^{4s+2}M_3^{-4s-2} \left(M_1 M_3^3 \right)^{-\frac{1}{2}} \left(M^{\frac{1}{2}}M_1^{-\frac{1}{2}}M_3^{-1}\right) M_1^{-s}M_3^{-2s} \\
& \qquad\qquad\qquad \times \Vert u_{M_3}\Vert_{X^s} \Vert u_{M_3} \Vert_{X^s}\Vert u_{M_1}\Vert_{X^s}\\
=&M_3^{-6s-\frac{9}{2}}M_1^{-1-s}M^{4s+\frac{5}{2}} \Vert u_{M_3}\Vert_{X^s} \Vert u_{M_3} \Vert_{X^s}\Vert u_{M_1}\Vert_{X^s}.
\end{align*}
Therefore, the summation with respect to $M_1,M_3$ is handled if $s \geq -\frac{3}{4}$.
After summation with respect to $M_1, M_3$, we obtain 
\begin{align}\label{higm modulation 2}
\sum\limits_{M \ll M_1 \ll M_3} \sum\limits_{\substack{ J\subset I \\ \vert J \vert=M_3^{4s+2} }}  \Vert  Q_{   \gg M_1M_3^3} f_6^{M_1,M_3} \Vert_{\dot{X}^{0,-\frac{1}{2},1}}
\lesssim M^{-3-3s}\Vert u \Vert_{X^s}^3.
\end{align} 
Therefore, by combining the intermediate modulation case $\eqref{intermediate modulation 2}$ and high modulation case $\eqref{higm modulation 2}$, we have
\begin{align*}
\Vert Q_{\geq M^4} f_6 \Vert \lesssim M^{-3-3s} \Vert u \Vert_{X^s}^3.
\end{align*}

\textbf{Subcae 2.c} In this case, the low frequency factor is conjugated but does not have high modulation. We consider
\begin{align*}
f_7
=&\sum\limits_{M\ll M_1 \lesssim M_3 } \sum\limits_{\substack{J\subset I \\ \vert J \vert=M_3^{4s+2} }} P_M \left(  Q_{\gtrsim M_3^4} \left(\chi_J u_{M_3} \right) \chi_J u_{M_3}  \overline{\chi_J u_{M_1}}    \right)\\
=& \sum\limits_{M\ll M_1 \lesssim M_3}\sum\limits_{\substack{ J\subset I \\ \vert J \vert=M_3^{4s+2} }} f_7^{M_1,M_3}
\end{align*}
If $M_1\ll M_3$, then the last two factor are $M_3$ separated in frequency. Although if $M_1 \approx M_3$, in order for the resulting frequency to be localized at frequency $M$, the two last factor should be still $ \vert \xi_3-\xi_2 \vert\approx M_3$ separated.    
Therefore, by using the bilinear estimate and high modulation bound, we can obtain the trilinear estimate:
\begin{align*}
\Vert \left(Q_{\gtrsim M_1M_3^3}  v_{M_3}\right) \overline{v_{M_1}} v_{M_3} \Vert_{L_{t,x}^1}\lesssim M_1^{-\frac{1}{2}}M_3^{-3} \Vert v_{M_3} \Vert_{U^2_{S}} \Vert v_{M_3} \Vert_{U^2_{S}} \Vert v_{M_1} \Vert_{U^2_{S}}.
\end{align*}
Therefore, the rest of the argument proceeds as in Subcase 2.a without any significant changes.

\textbf{Subcae 2.d} In this case, the low frequency factor is conjugated and has high modulation. We consider 
\begin{align*}
f_8
=&\sum\limits_{M\ll M_1 \ll M_3 } \sum\limits_{\substack{J\subset I \\ \vert J \vert=M_3^{4s+2} }} P_M \left( \chi_J u_{M_3}  \overline{Q_{\gtrsim M_3^4} \left(\chi_J u_{M_1} \right)} \chi_J u_{M_3}     \right)\\
=& \sum\limits_{M\ll M_1 \ll M_3}\sum\limits_{\substack{ J\subset I \\ \vert J \vert=M_3^{4s+2} }} f_8^{M_1,M_3}.
\end{align*}
In order for the resulting frequency to be localized at frequency $M$, the two frequency $M_3$ factors should be still $  M_3$ separated. Therefore, by applying the bilinear estimate and high modulation bound, we obtain the trilinear estimate
\begin{align*}
\Vert  v_{M_3}  \overline{Q_{\gtrsim M_3^4} v_{M_1}  } v_{M_3}  \Vert_{L_{t,x}^1}\lesssim & M_3^{-\frac{7}{2}}  \Vert v_{M_3} \Vert_{U^2_{S}} \Vert v_{M_3} \Vert_{U^2_{S}} \Vert v_{M_1} \Vert_{U^2_{S}}.
\end{align*}
Therefore, we can argue as in Subcase 2.a with better gains.

\textbf{Subcase 2.e} In this case, all frequencies are equal and the conjugated factor has high modulation. We consider
\begin{align*} 
f_9=&\sum\limits_{M\ll M_3 } \sum\limits_{\substack{J\subset I \\ \vert J \vert=M_3^{4s+2} }} P_M \left( \chi_J u_{M_3}  \overline{Q_{\gtrsim M_3^4} \left(\chi_J u_{M_3} \right)} \chi_J u_{M_3}     \right)\\
=& \sum\limits_{M\ll M_3}\sum\limits_{\substack{ J\subset I \\ \vert J \vert=M_3^{4s+2} }}f_9^{M_3}.
\end{align*}
This is the worst case since we cannot have any frequency separation among the two unconjugated factors and hence we cannot depend on bilinear gains. Therefore, to obtain summability for $M_3$, we use the local smoothing estimates.
By using the local smoothing estimates $\eqref{eqn:local smoothing estimate U^2}$, maximal function estimates $\eqref{eqn:maximal function estimate U^4}$ and high modulation bound, we have
\begin{equation}\label{local smoothing + maximal function}
\begin{split}
\Vert v_{M_3} \overline{Q_{\gtrsim M_3^4}v_{M_3}} v_{M_3} \Vert_{L_x^{\frac{4}{3}}L_t^1  }\lesssim& \Vert v_{M_3} \Vert_{L_x^{\infty}L_t^2} \Vert Q_{\gtrsim M_3^4 }v_{M_3} \Vert_{L_x^2 L_t^2} \Vert v_{M_3} \Vert_{L_x^4L_t^{\infty}}  \\
\lesssim & M_3^{-\frac{13}{4}}  \Vert v_{M_3} \Vert_{U^2_{S}} \Vert v_{M_3} \Vert_{U^2_{S}} \Vert v_{M_3} \Vert_{U^2_{S}}.
\end{split}
\end{equation}

\textbf{Low modulation output}
By considering the interval summation, we obtain
\begin{align*}
\sum\limits_{\substack{ J\subset I \\ \vert J \vert=M_3^{4s+2} }}\Vert f_9^{M_3} \Vert_{L_x^{\frac{4}{3}}L_t^1 }\lesssim & \left( M^{4s+2} M_3^{-4s-2} \right) M_3^{-3s}M_3^{-\frac{13}{4}} \Vert u_{M_3} \Vert_{X^s}^3\\
\lesssim& M^{4s+2}M_3^{-7s-\frac{21}{4}} \Vert u_{M_3} \Vert_{X^s}^3,
\end{align*}
which is summable with respect to $M_3$ if $s\geq -\frac{3}{4}$.
After summation with respect to $ M_3$, we obtain 
\begin{align*}
\Vert Q_{\leq M^4} f_9 \Vert_{L_x^{\frac{4}{3}}L_t^1}
\lesssim  \Vert f_9 \Vert_{L_x^{\frac{4}{3}}L_t^1}
\lesssim M^{-3s-\frac{13}{4}}\Vert u \Vert_{X^s}^3.
\end{align*}

To estimate $ \Vert Q_{\geq M^4} f_9\Vert_{\dot{X}^{0,-\frac{1}{2},1}} $, we need to decompose $ Q_{\geq M^4} f_9$ into the following intermediate modulation output and high modulation output:
\begin{align*}
Q_{\geq M^4}f_9=&\sum\limits_{\substack{M_3:\\ M\ll M_3 }}\sum\limits_{\substack{ J\subset I \\ \vert J \vert=M_3^{4s+2} }}Q_{\geq M^4}f_9^{M_3}\\
=&\sum\limits_{\substack{M_3:\\ M\ll  M_3 }}\sum\limits_{\substack{ J\subset I \\ \vert J \vert=M_3^{4s+2} }} Q_{M^4 \leq \sigma \lesssim M_3^4}f_9^{M_3}+\sum\limits_{\substack{M_3:\\M\ll  M_3}}\sum\limits_{\substack{ J\subset I \\ \vert J \vert=M_3^{4s+2} }} Q_{\gg M_3^4}f_9^{M_3}.
\end{align*} 

\textbf{Intermediate modulation output}
By using Bernstein's inequality and $\eqref{local smoothing + maximal function}$, we have
\begin{align*}
\Vert Q_\sigma P_M \left( v_{M_3} \overline{Q_{\gtrsim M_3^4}v_{M_3}} v_{M_3} \right) \Vert_{L_x^{2} L_t^2  }\lesssim \sigma^{\frac{1}{2}}M^{\frac{1}{4}} M_3^{-\frac{13}{4}} \Vert v_{M_3} \Vert_{U^2_{S}} \Vert v_{M_3} \Vert_{U^2_{S}} \Vert v_{M_3} \Vert_{U^2_{S}}.
\end{align*}
Hence, by considering the interval summation, we obtain
\begin{align*}
\sum\limits_{\substack{ J\subset I \\ \vert J \vert=M_3^{4s+2} }}\Vert Q_\sigma f_9^{M_3} \Vert_{L_{t,x}^2}\lesssim& \left( M^{4s+2} M_3^{-4s-2}\right)\sigma^{\frac{1}{2}}M^{\frac{1}{4}}M_3^{-\frac{13}{4}}M_3^{-3s}  \Vert u_{M_3} \Vert_{X^s}^3\\
=& M^{4s+\frac{9}{4}}M_3^{-7s-\frac{21}{4}}\sigma^{\frac{1}{2}}  \Vert u_{M_3} \Vert_{X^s}^3,
\intertext{or equivalently}
\sum\limits_{\substack{ J\subset I \\ \vert J \vert=M_3^{4s+2} }}\Vert Q_\sigma f_9^{M_3} \Vert_{\dot{X}^{0,-\frac{1}{2},1}}\lesssim & M^{4s+\frac{9}{4}}M_3^{-7s-\frac{21}{4}}  \Vert u_{M_3} \Vert_{X^s}^3.
\end{align*}  
By summing over $ M^4 \leq \sigma \leq M_3^4 $, we obtain
\begin{align*}
\sum\limits_{ M^4 \leq \sigma \lesssim M_3^4 } \sum\limits_{\substack{ J\subset I \\ \vert J \vert=M_3^{4s+2} }} \Vert Q_\sigma  f_9^{M_3} \Vert_{\dot{X}^{0,-\frac{1}{2},1}} \lesssim & \; M^{4s+\frac{9}{4}}M_3^{-7s-\frac{21}{4}}\ln\left(\frac{M_3}{M}\right)  \Vert u_{M_3}\Vert_{X^s}^3.
\end{align*}
Hence, the summation with respect to $M_3$ can be handled with $s>-\frac{3}{4}$.
After summation with respect to $ M_3$, we obtain 
\begin{align}\label{intermediate modulation 3}
\sum\limits_{M \ll  M_3}\sum\limits_{\substack{ J\subset I \\ \vert J \vert=M_3^{4s+2} }}\Vert  Q_{   M^4 \leq \sigma \lesssim M_3^4} f_9^{M_3} \Vert_{\dot{X}^{0,-\frac{1}{2},1}}
\lesssim M^{-3-3s}\Vert u \Vert_{X^s}^3.
\end{align} 

\textbf{High modulation output.} 
In this case, we need to estimate the output localized at modulations $\sigma \gg M_3^4$. In order to obtain such an output at least one of the inputs should have modulation at least $\sigma$. We assume that the conjugated factor has modulation $\sigma$. This is the worst case since, for the remaining case, we can use the bilinear estimates.
By using Bernstein's inequality, we have
\begin{align*}
\Vert Q_\sigma P_M \left(  v_{M_3} \overline{Q_\sigma v_{M_3} } v_{M_3}  \right) \Vert_{L_t^2L_x^2}\lesssim \sigma^{\frac{1}{2}}M^{\frac{1}{4}} \Vert  v_{M_3} \overline{Q_\sigma v_{M_3} } v_{M_3} \Vert_{L_x^{\frac{4}{3}}L_t^1}.
\end{align*}
By applying local smoothing estimate, maximal function estimate and high modulation bound, we have
\begin{align*}
\Vert Q_\sigma P_M \left(  v_{M_3} \overline{Q_\sigma v_{M_3} } v_{M_3}  \right) \Vert_{L_t^2L_x^2}\lesssim& \sigma^{\frac{1}{2}}M^{\frac{1}{4}}M_3^{-\frac{3}{2}}M_3^{\frac{1}{4}}\sigma^{-\frac{1}{2}} \Vert v_{M_3} \Vert_{U_S^2}\\
=&M^{\frac{1}{4}}M_3^{-\frac{5}{4}} \Vert v_{M_3} \Vert_{U_S^2}^3. 
\end{align*} 
By considering the interval summation, we obtain
\begin{align*}
\sum\limits_{\substack{ J\subset I \\ \vert J \vert=M_3^{4s+2} }}\Vert Q_{\gg M_3^4} f_9^{M_3} \Vert_{\dot{X}^{0,-\frac{1}{2},1}}\lesssim&  \sum\limits_{\substack{ J\subset I \\ \vert J \vert=M_3^{4s+2} }}\sum\limits_{\sigma \gtrsim M_3^4} \Vert Q_{\sigma} f_9^{M_3} \Vert_{\dot{X}^{0,-\frac{1}{2},1}}\\
\lesssim&\left(M^{4s+2}M_3^{-4s-2}\right) M^{\frac{1}{4}}M_3^{-\frac{13}{4}-3s} \Vert u_{M_3}\Vert_{X^s}^3\\
=&M^{4s+\frac{9}{4}}M_3^{-7s-\frac{21}{4}} \Vert u_{M_3}\Vert_{X^s}^3,
\end{align*}
which is summable with respect to $M_3$ if $ s \geq -\frac{3}{4}$. After summation with respect to $M_3$, we obtain 
\begin{align}\label{higm modulation 3}
\sum\limits_{M \ll  M_3}\sum\limits_{\substack{ J\subset I \\ \vert J \vert=M_3^{4s+2} }} \Vert Q_{   \gg M_3^4} f_9^{M_3} \Vert_{\dot{X}^{0,-\frac{1}{2},1}}
\lesssim M^{-3-3s}\Vert u \Vert_{X^s}^3.
\end{align} 
Therefore, by combining the intermediate modulation case $\eqref{intermediate modulation 3}$ and high modulation case $\eqref{higm modulation 3}$, we have
\begin{align*}
\Vert Q_{\geq M^4} f_9 \Vert \lesssim M^{-3-3s} \Vert u \Vert_{X^s}^3.
\end{align*}  
 
\end{proof}

\section{Conservation of the $H^s$ energy}\label{sec:energy estimate}
In this section, we want to show the conservation of the $H^s$ energy. 
We are inspired by the method that is analogous to that in Colliander-Keel-Staffilani-Takaoka-Tao \cite{CKSTT2003} and follow the argument in Koch-Tataru \cite{KT2007}. To obtain the energy estimate, we use the $I$-method with correction term.

We first define the $H^s$ energy: 
\begin{align*}
E_0\left(u\right)=\left\langle  a\left(D\right)u,u \right\rangle.
\end{align*}
For the $H^s$ energy conservation, we want to choose the symbol $a\left(\xi\right)=\left(1+\xi^2\right)^s$, but as in \cite{KT2007}, we allow a slightly larger class of symbols.
\begin{definition} Let $\epsilon>0 ,s \in \mathbb{R}$. Then $S_\epsilon^s$ is the class of spherically symmetric symbols with the following properties:
	
(i) Slowly varying condition: For $\vert \xi \vert \approx \vert \xi'\vert$, we have
\begin{align*}
a\left(\xi\right) \approx a\left(\xi'\right).
\end{align*}

(ii) symbol regularity,
\begin{align*}
\left\vert \partial^{\alpha} a\left(\xi\right) \right\vert \lesssim a\left(\xi\right)\left(1+\xi^2\right)^{-\frac{\alpha}{2}}.
\end{align*}	

(iii) decay at infinity,
\begin{align*}
s-\epsilon \leq \frac{\log a\left(\xi\right)}{\log\left(1+\xi^2 \right)}\leq s+\epsilon.
\end{align*}
Here $\epsilon$ is a small parameter.  
\end{definition}

The main goal of this section is to obtain the following energy bound.
\begin{prop}\label{prop:energy bound.}
Let $-\frac{3}{4} < s < -\frac{1}{2} $ and $u$ be a solution  to \eqref{eqn:fourth order NLS} with
\begin{align*}
\Vert u \Vert_{\ell_N^2 L_t^\infty H^s}\ll 1.
\end{align*}
Then we have
\begin{align}\label{eqn:energy estimates N.}
\Vert u \Vert_{\ell_N^2 L_t^\infty H^s} \lesssim \Vert u_0 \Vert_{H^s}+\Vert u \Vert_{X^s}^3.
\end{align}
\end{prop}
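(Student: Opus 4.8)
The plan is to prove the energy estimate \eqref{eqn:energy estimates N.} via a modified $I$-method with a normal-form correction. I would work with the modified energy $E_0(u) = \langle a(D)u, u\rangle$ for a symbol $a \in S_\epsilon^s$ that equals $(1+\xi^2)^s$ on the relevant frequency range but is damped (made flat, e.g.\ constant) at very high frequencies; this damping is what makes the multilinear corrections controllable. Since $\|u\|_{\ell_N^2 L_t^\infty H^s}^2 = \sum_N N^{2s}\|u_N\|_{L_t^\infty L_x^2}^2$ and each dyadic piece $N^{2s}\|u_N(t)\|_{L_x^2}^2$ is comparable to a frequency-localized version of $E_0$, it suffices to control $\sup_{t\in[0,1]} E_0(u_N)(t)$ uniformly, and then the desired bound follows by summing in $N$ (using $a\approx (1+\xi^2)^s$ on the support of $P_N$ and the smallness hypothesis to absorb the damped tail).

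The key computation is the time derivative of $E_0$. Differentiating and using the equation $i\partial_t u - \partial_x^4 u = \mu|u|^2 u$, the quadratic (linear-flow) part cancels because $a(D)$ and $e^{it\partial_x^4}$ commute, leaving
\[
\frac{d}{dt}E_0(u) = 2\,\mathrm{Re}\,\Big( i\mu \int_{\mathbb{R}} a(D)u \cdot \overline{|u|^2 u}\,dx \Big),
\]
a quartic expression in $u$. Passing to the Fourier side, this is a quartic multilinear form with multiplier built from $a(\xi_1)-a(\xi_2)+a(\xi_3)-a(\xi_4)$ type symmetrizations; by the symbol regularity of $a$ and the mean value theorem this symbol is favorably bounded by (a product of smaller frequencies)$\times a(\text{highest})/(\text{highest})^2$, i.e.\ we gain derivatives relative to the naive estimate. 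One then integrates in $t$ over $[0,1]$, decomposes dyadically in all four frequencies and in modulation, and estimates the resulting space-time integral using the trilinear/bilinear machinery: Strichartz \eqref{eqn:Stricharz L-P piece N}, bilinear Strichartz \eqref{eqn:bilinear not log}, \eqref{eqn:log interpolation}, local smoothing \eqref{eqn:local smoothing estimate U^2} and the maximal function bound \eqref{eqn:maximal function estimate U^4}, all in the short-time $U_S^2$-based spaces adapted to intervals of length $N^{4s+2}$.

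The main obstacle, exactly as in the trilinear estimate, is the loss coming from summing over the $\sim N^{-(4s+2)}$ short intervals $J$ that partition $[0,1]$; this loss is worst in the high-high-high-to-low interaction, where the output frequency is much smaller than the three comparable high input frequencies. To overcome it I would perform a \emph{normal form reduction}: integrate by parts in time in the non-resonant part of the quartic term, which trades one power of the modulation gain $\sigma \gtrsim M_1 M_3^3$ (from the resonance relation $(\tau_1-\xi_1^4)-(\tau_2-\xi_2^4)+(\tau_3-\xi_3^4)-(\tau-\xi^4) = (\xi-\xi_1)(\xi-\xi_3)(\xi_1^2+\xi_2^2+\xi_3^2+\xi^2+2(\xi_1+\xi_3)^2)$) for a correction term $E_1(u)$ that is a quintic-in-$u$ boundary term plus a sixth-order space-time integral (after substituting the equation for $\partial_t u$). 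The modified energy $E_0 + E_1$ then has a time derivative that is a genuinely higher-order (sextic) expression with enough smoothing to be estimated directly by Lemma \ref{lem:improved trilinear} together with the bilinear and local smoothing estimates, giving a bound of the form $\|u\|_{X^s}^6 \lesssim \|u\|_{\ell_N^2 L_t^\infty H^s}^3 \|u\|_{X^s}^3$, which under the smallness hypothesis $\|u\|_{\ell_N^2 L_t^\infty H^s}\ll 1$ closes the estimate. One must also check that the correction term itself is bounded, $|E_1(u)| \lesssim \|u\|_{\ell_N^2 L_t^\infty H^s}\|u\|_{X^s}^3$ (again small), so that $E_0(u) \approx E_0(u_0) + (\text{controlled})$ and the passage back to $\|u\|_{\ell_N^2 L_t^\infty H^s}$ is legitimate. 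Summing the dyadic bounds in $N$ and taking square roots yields \eqref{eqn:energy estimates N.}.
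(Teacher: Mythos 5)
Your strategy is essentially the paper's: a damped-symbol modified energy $E_0(u)=\langle a(D)u,u\rangle$ with $a\in S_\epsilon^s$, a normal-form correction exploiting the factorization of the resonance function, and an estimate of the resulting sextic remainder in the short-time $U_S^2$-based spaces via Lemma \ref{lem:improved trilinear}, bilinear Strichartz and local smoothing. Two corrections are needed, one minor and one substantive. Minor: the boundary term produced by the normal form is \emph{quartic} in $u$, not quintic; in the paper it is $E_1(u)=\int_{P_4}b_4\,\widehat u\,\overline{\widehat u}\,\widehat u\,\overline{\widehat u}$ with $b_4=-\bigl(a(\xi_1)-a(\xi_2)+a(\xi_3)-a(\xi_4)\bigr)/\bigl(i(\xi_1^4-\xi_2^4+\xi_3^4-\xi_4^4)\bigr)$, and Proposition \ref{lem:multiplier estimate b_4} (mean value and double mean value cancellations in the numerator) shows $|b_4|\lesssim a(M_1)M_2^{-1}M_3^{-3}$ on every dyadic block, so the quartic term is \emph{fully} nonresonant. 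Your plan to integrate by parts only ``in the non-resonant part'' implicitly leaves a resonant quartic piece unestimated; you must check, as the paper does, that the numerator degenerates on the near-resonant set at a matching rate, so that in fact nothing is left over. Also $|E_1(u)|$ is controlled pointwise in time by $E_0(u)^2$ (Proposition \ref{prop:E_1 energy}); a bound in terms of $X^s$ norms is neither what is proved nor what is needed there.

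The substantive gap is the final summation in $N$. The norm to be bounded is $\ell_N^2L_t^\infty H^s$, with the supremum in time \emph{inside} the dyadic sum. Running the modified-energy argument once gives, for each dyadic $N$, a bound of the schematic form $\sup_t N^{2s}\Vert u_N(t)\Vert_{L^2}^2\lesssim \Vert u_0\Vert_{H^s}^2+\Vert u\Vert_{X^s}^6$, and summing this over all dyadic $N$ diverges, since the right-hand side carries no decay in $N$; ``absorbing the damped tail by smallness'' does not repair this. Nor can you simply apply the energy identity to $u_N$, because $u_N$ does not solve the equation, and the sextic remainder for a frequency-localized piece is still only controlled by global $X^s$ norms. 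The paper closes this with a frequency-envelope device: it builds slowly varying weights $\beta_N$ from $u_0$ with $\sum_N\beta_N\lesssim 1$, chooses for each $N_0$ a symbol $a\in S_\epsilon^s$ peaked at $N_0$ (namely $a_N=N^{2s}\max\{1,\beta_{N_0}^{-1}2^{-\epsilon|\log_2N-\log_2N_0|}\}$), and runs the whole argument once per $N_0$ to get $\sup_t N_0^{s}\Vert u_{N_0}(t)\Vert_{L^2}\lesssim\beta_{N_0}^{1/2}\bigl(\Vert u_0\Vert_{H^s}+\Vert u\Vert_{X^s}^3\bigr)$, which is square-summable in $N_0$. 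Your proposal needs this (or an equivalent mechanism) to pass from the energy identity to \eqref{eqn:energy estimates N.}.
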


We define the energy functional
\begin{align*}
E_0\left(u\right)=\Vert u \Vert_{H^a}^2=\langle a(D)u,u\rangle_{L_x^2}.
\end{align*}
By differentiating this energy under the $\eqref{eqn:fourth order NLS}$ flow, we have
\begin{align*}
\frac{d}{dt}E_0\left(u\right)=2\Re \int_{\xi_1-\xi_2+\xi_3-\xi_4=0}i a\left(\xi_1\right)\widehat{u}\left(\xi_1\right)\overline{\widehat{u}}\left(\xi_2\right)\widehat{u}\left(\xi_3\right)\overline{\widehat{u}}\left(\xi_4\right).
\end{align*}
By symmetrizing above integral, we have
\begin{align*}
\frac{d}{dt}E_0\left(u\right)=&\frac{1}{2} \Re \int_{\xi_1-\xi_2+\xi_3-\xi_4=0}i\left(a\left(\xi_1\right)-a\left(\xi_2\right)+a\left(\xi_3\right)-a\left(\xi_4\right) \right)\widehat{u}\left(\xi_1\right)\overline{\widehat{u}}\left(\xi_2\right)\widehat{u}\left(\xi_3\right)\overline{\widehat{u}}\left(\xi_4\right)
\end{align*}
We want to cancel this term by adding the correction term $E_1\left(u\right)$, where $E_1\left(u\right)$ has the form
\begin{align*}
E_{1}\left(u\right)=\int\limits_{\xi_1-\xi_2+\xi_3-\xi_4=0}b_4\left(\xi_1,\dots,\xi_4\right) \widehat{u}\left(\xi_1\right)\overline{\widehat{u}}\left(\xi_2\right)\widehat{u}\left(\xi_3\right)\overline{\widehat{u}}\left(\xi_4\right),
\end{align*}
where the function $b_4$ is symmetric under the even $\xi_j$ indices, or of the odd $\xi_j$ indices.
The $b_4$ will be determined later. The role of $b_4$ is to make a cancelation. 
Observe that
\begin{align*}
\frac{d}{dt}E_1\left(u\right)=&\int\limits_{\xi_1-\xi_2+\xi_3-\xi_4=0}ib_4\left(\xi_1,\dots,\xi_4\right)\left(\xi_1^4-\xi_2^4+\xi_3^4-\xi_4^4 \right) \widehat{u}\left(\xi_1\right)\overline{\widehat{u}}\left(\xi_2\right)\widehat{u}\left(\xi_3\right)\overline{\widehat{u}}\left(\xi_4\right) \\
+&4\Re \int\limits_{\xi_1-\xi_2+\xi_3-\xi_4=0} ib_4\left(\xi_1,\xi_2,\xi_3,\xi_4 \right) \widehat{ u}\left(\xi_1\right)\overline{\widehat{u}}\left(\xi_2\right)\widehat{u}\left(\xi_3\right)\overline{\widehat{\vert u \vert^2 u}}\left(\xi_4\right).
\end{align*}
To cancel the first integral in $\frac{d}{dt}E_1\left(u\right)$, we choose $b_4$ as follows:
\begin{align*}
b_4\left(\xi_1,\dots,\xi_4 \right)=-\frac{a\left(\xi_1\right)-a\left(\xi_2\right)+a\left(\xi_3\right)-a\left(\xi_4\right)}{i\left(\xi_1^4-\xi_2^4+\xi_3^4 -\xi_4^4\right)}, \quad \text{on} \quad  P_4=\left\{\xi_1-\xi_2+\xi_3-\xi_4=0 \right\} .
\end{align*}
In this situation, a resonant interaction does not appear. Later in Proposition \ref{lem:multiplier estimate b_4}, we will show that the multiplier $b_4$ should be fully nonresonant.

Therefore by using the above calculations, we have
\begin{equation}
\label{eqn:Lambda_6}
\begin{split}
\Lambda_6\left(u(t)\right):=&\frac{d}{dt}\left(E_0+E_1 \right)\left(u\right)\\
=& 4\Re \int\limits_{\substack{\xi_1-\xi_2+\xi_3=\xi\\ \xi_4-\xi_5+\xi_6=\xi}} ib_4\left(\xi_1,\xi_2,\xi_3,\xi \right) \widehat{ u}\left(\xi_1\right)\overline{\widehat{u}}\left(\xi_2\right)\widehat{u}\left(\xi_3\right) \overline{\widehat{u}}\left(\xi_4\right) \widehat{u}\left(\xi_5\right) \overline{\widehat{u}}\left(\xi_6\right).
\end{split}
\end{equation} 
Before we prove the energy estimate, we prove the following multiplier estimate $\eqref{eqn:multiplier estimate in energy estimate}$ that shows the multiplier $b_4$ is fully nonresonant. In order to estimate the correction term $E_1\left(u\right)$ and the derivative of modified energy $\frac{d}{dt}\left(E_0+E_1\right)$, we need to obtain the size of $b_4$. 
Originally, $b_4$ is defined only on the diagonal $\left\{\xi_1-\xi_2+\xi_3-\xi_4=0 \right\}$. In order to separate variables, we want to extend it off diagonal in a smooth way.
  
Before stating the Lemma \ref{lem:multiplier estimate b_4}, we recall the following two mean value formulas: if $|\eta|,|\lambda|\ll |\xi|$, then
\begin{align}\label{eqn: mean value theorem}
\left| a(\xi+\eta)-a(\xi) \right|\lesssim |\eta|\sup\limits_{|
	\xi'|\approx |\xi|}\left|a'\left(\xi'\right) \right|,    
\end{align}
and
\begin{align}\label{eqn: double mean value theorem}
\left| a\left(\xi+\eta+\lambda \right)-a\left(\xi+\eta\right)-a\left(\xi+\lambda\right)+a\left(\xi\right)  \right|\lesssim |\eta||\lambda|\sup\limits_{|\xi'|\approx |\xi|} \left| a^{''}\left(\xi'\right) \right|.    
\end{align}
\begin{prop}\label{lem:multiplier estimate b_4}
	Let $a$ be a multiplier in $S^s_\epsilon$. Then for each dyadic $M_1\leq M_2\leq M_3$ there is an extension of $b_4$ from the diagonal set
	\begin{align*}
	\left\{\left(\xi_1,\xi_2,\xi_3,\xi_4 \right) \in P_4, \vert \xi_{1} \vert \approx M_1, \vert \xi_{2} \vert   \approx M_2, \vert \xi_{3} \vert,\vert \xi_4\vert \approx M_3 \right\}
	\end{align*} 
to the full dyadic set
\begin{align*}
\left\{   \vert \xi_{1} \vert \approx M_1, \vert \xi_{2} \vert   \approx M_2, \vert \xi_{3} \vert,\vert \xi_4\vert \approx M_3    \right\}
\end{align*} 
which satisfies the size and regularity conditions
	\begin{align}\label{eqn:multiplier estimate in energy estimate}
	\left\vert   \partial_{\xi_1}^{\beta_1}\partial_{\xi_2}^{\beta_2}\partial_{\xi_3}^{\beta_3} \partial_{\xi_4}^{\beta_4} b_4\left(\xi_1,\xi_2,\xi_3,\xi_4 \right)         \right\vert \lesssim    a\left(M_1\right)M_2^{-1}M_3^{-3} M_1^{-\beta_1}M_2^{-\beta_2}M_3^{-\beta_3-\beta_4}
	\end{align}
The implicit constants are independent of $M_1,M_2,M_3$.	
\end{prop}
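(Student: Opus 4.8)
I would reduce the statement to a purely algebraic factorization of the denominator, then prove the size bound by a case analysis over the dyadic configuration of the frequencies, and finally obtain the off‑diagonal extension and the derivative bounds by reading each case‑formula as a function of independent variables and applying the Leibniz rule.

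\textbf{Step 1: factoring the denominator.} The computation in the proof of Lemma~\ref{lem:improved trilinear}, specialized to $\xi=\xi_4$ (so that $\xi_1-\xi_2+\xi_3-\xi_4=0$), gives on $P_4$
\[
\xi_1^4-\xi_2^4+\xi_3^4-\xi_4^4=-(\xi_1-\xi_4)(\xi_3-\xi_4)\,Q(\xi),\qquad Q(\xi):=\xi_1^2+\xi_2^2+\xi_3^2+\xi_4^2+2(\xi_1+\xi_3)^2 ,
\]
and since $\xi_3-\xi_4=\xi_2-\xi_1$ on $P_4$ the left side also equals $(\xi_1-\xi_2)(\xi_1-\xi_4)\,Q$. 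Because $Q$ is a sum of squares and $|\xi_3|,|\xi_4|\approx M_3$, we have $Q\approx M_3^2$ on the relevant set, with $|\partial_{\xi_1}^{\beta_1}\cdots\partial_{\xi_4}^{\beta_4}Q^{-1}|\lesssim M_3^{-2}M_1^{-\beta_1}M_2^{-\beta_2}M_3^{-\beta_3-\beta_4}$ (here $M_i\le M_3$ is used). Thus on $P_4$
\[
b_4=-\frac{a(\xi_1)-a(\xi_2)+a(\xi_3)-a(\xi_4)}{i(\xi_1-\xi_2)(\xi_1-\xi_4)\,Q},
\]
and the numerator vanishes on $\{\xi_1=\xi_2\}\cup\{\xi_1=\xi_4\}$ (on $P_4$, $\xi_1=\xi_2$ forces $\xi_3=\xi_4$, and $\xi_1=\xi_4$ forces $\xi_2=\xi_3$), matching to first order the zeros of the denominator; so $b_4$ is smooth and the point is to control the \emph{size} of $b_4$ and of its derivatives.

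\textbf{Step 2: size estimate by cases.} Put $d:=|\xi_1-\xi_2|=|\xi_3-\xi_4|$ and $e:=|\xi_1-\xi_4|=|\xi_2-\xi_3|$ (the equalities hold on $P_4$), so the denominator has size $d\,e\,M_3^2$. The basic tool is the elementary bound, coming from evenness and the slowly‑varying/regularity properties of $a$ together with \eqref{eqn: mean value theorem}: if $|\zeta|\lesssim|\zeta'|$ then $|a(\zeta)-a(\zeta')|\lesssim |\zeta-\zeta'|\,|\zeta'|^{-1}a(\zeta)$ (use the mean value theorem when $|\zeta|\approx|\zeta'|$, and $|a(\zeta)-a(\zeta')|\lesssim a(\zeta)$ with $|\zeta-\zeta'|\approx|\zeta'|$ otherwise). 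With the grouping $G=[a(\xi_1)-a(\xi_2)]+[a(\xi_3)-a(\xi_4)]$ this yields $|G|\lesssim d\,a(M_1)M_2^{-1}+d\,a(M_3)M_3^{-1}\lesssim d\,a(M_1)M_2^{-1}$, hence $|b_4|\lesssim a(M_1)M_2^{-1}(e\,M_3^2)^{-1}$, which is the claimed bound once $e\gtrsim M_3$; with the grouping $G=[a(\xi_1)-a(\xi_4)]-[a(\xi_2)-a(\xi_3)]$ one similarly gets $|G|\lesssim e\,a(M_1)M_3^{-1}$, hence $|b_4|\lesssim a(M_1)(d\,M_3^3)^{-1}$, the claimed bound once $d\gtrsim M_2$. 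If neither of $e\gtrsim M_3$, $d\gtrsim M_2$ holds, then $e\ll M_3$ forces $|\xi_1|\approx M_3$, hence $M_1\approx M_2\approx M_3=:N$ and $d\ll N$; here, using $\xi_3=\xi_2+\xi_4-\xi_1$ on $P_4$, $G$ equals the second difference $\Delta_{\xi_2-\xi_1}\Delta_{\xi_4-\xi_1}a(\xi_1)$, so \eqref{eqn: double mean value theorem} gives $|G|\lesssim d\,e\,a(N)N^{-2}$ and $|b_4|\lesssim a(N)N^{-4}$. Since the two largest frequencies are $\xi_3,\xi_4$, these three cases exhaust all configurations.

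\textbf{Step 3: extension and derivatives; and the main obstacle.} In each case $b_4$ is written on $P_4$ as $Q^{-1}$ times a sum of products of factors of the three types: $1/(\xi_i-\xi_j)$ with $|\xi_i-\xi_j|$ of a prescribed dyadic size; divided differences $\tfrac{a(\xi_i)-a(\xi_j)}{\xi_i-\xi_j}=\int_0^1 a'(\xi_j+t(\xi_i-\xi_j))\,dt$; and, in the comparable case, $\int_0^1\!\int_0^1 a''(\xi_1+w(\xi_2-\xi_1)+v(\xi_4-\xi_1))\,dw\,dv$. Declaring these to be functions of independent $(\xi_1,\dots,\xi_4)$ — after a further partition of the dyadic block $\{|\xi_1|\approx M_1,|\xi_2|\approx M_2,|\xi_3|,|\xi_4|\approx M_3\}$ according to the dyadic sizes of the relevant $|\xi_i-\xi_j|$ and summing the pieces — produces a smooth extension agreeing with $b_4$ on $P_4$ by the identities of Step~1. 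The bound \eqref{eqn:multiplier estimate in energy estimate} then follows from the Leibniz rule: $Q^{-1}$ contributes as above, each $1/(\xi_i-\xi_j)$ contributes powers controlled by its dyadic size, and differentiating a divided difference produces combinations of $\tfrac{a^{(k)}(\xi_i)}{(\xi_i-\xi_j)^{\ell}}$ and $\tfrac{a(\xi_i)-a(\xi_j)}{(\xi_i-\xi_j)^{m}}$ whose sizes, by the symbol regularity and (in the degenerate direction) monotonicity of $a$ together with $M_1\le M_2\le M_3$, combine to the required powers; in the comparable case all arguments of $a$ and its derivatives sit at scale $N$, so $|a^{(k)}|\lesssim a(N)N^{-k}$ and the estimate is immediate. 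The main obstacle is precisely the numerator bookkeeping in Step~2 — selecting, in each configuration, the grouping of $a(\xi_1)-a(\xi_2)+a(\xi_3)-a(\xi_4)$ that exhibits exactly the first‑ (resp.\ second‑) order vanishing matching the denominator — and then checking in Step~3 that the "mismatched scale" terms arising from differentiating the divided differences are absorbed by $M_1\le M_2\le M_3$ and the evenness of $a$; the positivity and $\approx M_3^2$ size of $Q$, which is what makes $b_4$ fully nonresonant, requires nothing beyond $|\xi_3|,|\xi_4|\approx M_3$.
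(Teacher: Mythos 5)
Your argument is correct and takes essentially the same route as the paper's proof: the same factorization of $\xi_1^4-\xi_2^4+\xi_3^4-\xi_4^4$ on $P_4$, the same first- and second-order cancellations via \eqref{eqn: mean value theorem} and \eqref{eqn: double mean value theorem}, and the same divided-difference formulas providing the smooth off-diagonal extension. Your case split by the gap sizes $\vert\xi_1-\xi_2\vert$, $\vert\xi_1-\xi_4\vert$ is only a mild reorganization of the paper's split by the relative sizes of $M_1,M_2,M_3$ (indeed a slight refinement, since it invokes the double mean value bound only where both gaps are small and its hypotheses hold); just note that the partition used in your Step 3 should be the coarse one at scales $M_2$ and $M_3$ in the difference variables, not a full dyadic decomposition, so that cutoff derivatives stay consistent with \eqref{eqn:multiplier estimate in energy estimate}.
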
 
The proof of Proposition \ref{lem:multiplier estimate b_4} is analogous to the proof of the Proposition 5.2 in \cite{KT2007}. The only difference is that the stronger dispersion produces an extra smoothing effect as much as $M_3^ 2$. For reader's convenience, 
we present the proof in detail.  
\begin{proof}	
	Observe that on $P_4$ resonance function admits the following factorization
	\begin{align*}
\xi_1^4-\xi_2^4+\xi_3^4-\xi_4^4=\left(\xi_4-\xi_1\right)\left(\xi_4-\xi_3\right)\left(\xi_1^2+\xi_2^2+\xi_3^2+\xi_4^4+2\left(\xi_1+\xi_3\right)^2   \right)   
	\end{align*}
	along with all versions of it due to the symmetries of $P_4$. For the proof, see \cite{OT2016}. We consider several cases:
	
	(i) $M_1\ll M_2\leq M_3$. Then the extension of $b_4$ is defined using the formula
	\begin{align*}
	b_4\left(\xi_1,\xi_2,\xi_3,\xi_4\right)=\frac{a(\xi_1)-a(\xi_2)+a(\xi_3)-a(\xi_4)}{(\xi_4-\xi_1)(\xi_1-\xi_2) \left(\xi_1^2+\xi_2^2+\xi_3^2+\xi_4^4+2\left(\xi_1+\xi_3\right)^2   \right) }
	\end{align*}
	and its size and regularity properties are easily followed from $\vert \xi_4-\xi_1 \vert \approx M_3$ and $\vert \xi_1-\xi_2 \vert \approx M_2$.
	
	(ii) $M_1 \approx M_2 \ll M_3$. Then the extension of $b_4$ is defined by
	\begin{align*}
	b_4\left(\xi_1,\xi_2,\xi_3,\xi_4\right)=&\frac{a(\xi_1)-a(\xi_2)}{(\xi_1-\xi_2)(\xi_4-\xi_1) \left(\xi_1^2+\xi_2^2+\xi_3^2+\xi_4^4+2\left(\xi_1+\xi_3\right)^2   \right) }\\
	 -&\frac{a(\xi_3)-a(\xi_4)}{(\xi_3-\xi_4)(\xi_4-\xi_1) \left(\xi_1^2+\xi_2^2+\xi_3^2+\xi_4^4+2\left(\xi_1+\xi_3\right)^2   \right) }.
	\end{align*}
	Observe that $\vert \xi_4-\xi_1 \vert \approx M_3$ and the remaining quotients exhibits cancellation properties. More precisely, by using the mean value formula $\eqref{eqn: mean value theorem}$ , we have
	\begin{align*}
	\left\vert \frac{a(\xi_1)-a(\xi_2) }{\xi_1-\xi_2} \right\vert \lesssim   \frac{a\left(M_1\right)}{M_1} \quad \text{and} \quad \left\vert \frac{a(\xi_3)-a(\xi_4) }{\xi_3-\xi_4} \right\vert \lesssim   \frac{a\left(M_3\right)}{M_3}.
	\end{align*}
	
	(iii) $M_1\approx M_2 \approx M_3$. Then the extension of $b_4$ is defined by
	\begin{align*}
	b_4\left(\xi_1,\xi_2,\xi_3,\xi_4 \right)=- \frac{ a(\xi_4-(\xi_4-\xi_3)-(\xi_4-\xi_1)) -a(\xi_4-(\xi_4-\xi_3))-a(\xi_4-(\xi_4-\xi_1))+a(\xi_4)  }{\left(\xi_4-\xi_1\right)\left(\xi_4-\xi_3\right)\left(\xi_1^2+\xi_2^2+\xi_3^2+\xi_4^4+2\left(\xi_1+\xi_3\right)^2   \right)   }.
	\end{align*}
	In this case, the resonant interaction is the most serious. But we can also use the cancellation properties. More precisely, by using the double mean value theorem \eqref{eqn: double mean value theorem},  we have
	\begin{align*}
		\left\vert \frac{ a(\xi_4-(\xi_4-\xi_3)-(\xi_4-\xi_1)) -a(\xi_4-(\xi_4-\xi_3))-a(\xi_4-(\xi_4-\xi_1))+a(\xi_4)  }{\left(\xi_4-\xi_1\right)\left(\xi_4-\xi_3\right) } \right\vert\lesssim \frac{a(M_3)}{M_3^2}.
	\end{align*} 	
	
\end{proof}

The effect of $E_1$ to the modified energy is easily controlled by $E_0$.    
\begin{prop}\label{prop:E_1 energy} 
Let $a\in S_\epsilon^s$ with $  s+\epsilon <-\frac{1}{2}$. Then we have
\begin{align}\label{eqn:remaining in correction 2}
\left| E_1\left(u\right)  \right|\lesssim E_0\left(u\right)^2.
\end{align}		
\end{prop}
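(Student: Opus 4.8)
The plan is to estimate $|E_1(u)|$ at a fixed time by a dyadic sum and to compare it with $E_0(u)^2=\big(\sum_M a(M)\|u_M\|_{L_x^2}^2\big)^2$. First I would pass to absolute values on the Fourier side, decompose each of the four factors into Littlewood--Paley pieces $|\xi_j|\approx N_j$, and relabel the scales as $\{N_1,N_2,N_3,N_4\}=\{M_1,M_2,M_3,M_3'\}$ with $M_1\le M_2\le M_3\approx M_3'$ (the constraint $\xi_1-\xi_2+\xi_3-\xi_4=0$ forces the two largest frequencies to be comparable). On each such block one has the pointwise bound $|b_4|\lesssim a(M_1)M_2^{-1}M_3^{-3}$: this is the content of Proposition \ref{lem:multiplier estimate b_4} when the two largest frequencies occupy ``adjacent'' indices (using the symmetries of $P_4$ to move them into positions $3,4$), while in the remaining case, where the two largest occupy ``diagonal'' indices, the resonance function has size $\approx M_3^4$ because the two large fourth powers enter with the same sign, which yields the even better bound $|b_4|\lesssim a(M_1)M_3^{-4}$. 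For this proposition only the pointwise bound is needed, not the derivative bounds.

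The heart of the matter is a fixed-time trilinear frequency estimate. Writing $v_N:=\mathcal F^{-1}(|\widehat{u_N}|)$, so that $\|v_N\|_{L_x^2}=\|u_N\|_{L_x^2}$, I would pair the two lowest-frequency factors with one $M_3$-factor each; splitting the constrained integral along the line $\{\xi_1+\xi_3=\xi_2+\xi_4\}$ and applying Plancherel and Cauchy--Schwarz bounds it by $\|v_{M_1}v_{M_3}\|_{L_x^2}\,\|v_{M_2}v_{M_3}\|_{L_x^2}$, and placing the lower-frequency factor of each product in $L_x^\infty$ via Bernstein, $\|v_N\|_{L_x^\infty}\lesssim N^{1/2}\|v_N\|_{L_x^2}$, produces a gain $M_1^{1/2}M_2^{1/2}$. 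Altogether
\[
|E_1(u)|\ \lesssim\ \sum_{M_1\le M_2\le M_3} a(M_1)\,M_1^{1/2}M_2^{-1/2}M_3^{-3}\,\|u_{M_1}\|_{L_x^2}\|u_{M_2}\|_{L_x^2}\|u_{M_3}\|_{L_x^2}^2 .
\]

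It remains to sum this series. I would set $e_M:=a(M)^{1/2}\|u_M\|_{L_x^2}$, so $E_0(u)\approx\sum_M e_M^2$, and substitute $\|u_M\|_{L_x^2}=a(M)^{-1/2}e_M$, turning the bound into $\sum_{M_1\le M_2\le M_3}\beta(M_1,M_2,M_3)\,e_{M_1}e_{M_2}e_{M_3}^2$ with $\beta=a(M_1)^{1/2}a(M_2)^{-1/2}a(M_3)^{-1}M_1^{1/2}M_2^{-1/2}M_3^{-3}$. Applying $e_{M_1}e_{M_2}\le\tfrac12(e_{M_1}^2+e_{M_2}^2)$ reduces matters to the two sums $\sum_{M_1\le M_3}\big(\sum_{M_1\le M_2\le M_3}\beta\big)e_{M_1}^2e_{M_3}^2$ and $\sum_{M_2\le M_3}\big(\sum_{M_1\le M_2}\beta\big)e_{M_2}^2e_{M_3}^2$. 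Using the decay condition $M^{2(s-\epsilon)}\lesssim a(M)\lesssim M^{2(s+\epsilon)}$ from the definition of $S^s_\epsilon$, the hypothesis $s+\epsilon<-\tfrac12$ makes $a(M_1)^{1/2}M_1^{1/2}\lesssim1$ and makes $M\mapsto a(M)^{-1/2}M^{-1/2}$ comparable to an increasing power, so the inner geometric sums collapse to their largest term; absorbing the remaining powers of $M_3$ into the $e_{M_3}^2$ weight, each sum is then $\lesssim\big(\sum_M e_M^2\big)^2=E_0(u)^2$, where one also uses that $s$ lies in the range $-\tfrac34<s<-\tfrac12$ under consideration, which keeps the leftover exponent $-3s+O(\epsilon)-\tfrac72$ negative.

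The main obstacle is precisely this summation. The multiplier weight $a(M_1)M_2^{-1}M_3^{-3}$ cannot be redistributed as $s$ derivatives on each of the four factors, since for $s<-\tfrac12$ the powers $M_2^{-1-2s}$ and $M_3^{-3-2s}$ grow; one is therefore forced to spend the $L_x^\infty$--Bernstein gain on the two low-frequency inputs and then balance the Sobolev weights by hand, and the bookkeeping is made slightly delicate by the fact that symbols in $S^s_\epsilon$ are only comparable to powers within an $\epsilon$-window rather than being exact powers. Everything else --- the symmetry reduction, the resonance factorization, and the bilinear/Bernstein step --- is routine once the multiplier bound of Proposition \ref{lem:multiplier estimate b_4} is in hand.
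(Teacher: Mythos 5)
Your proposal is correct and follows essentially the same route as the paper: the pointwise bound $|b_4|\lesssim a(M_1)M_2^{-1}M_3^{-3}$ from Proposition \ref{lem:multiplier estimate b_4}, an $L^\infty\times L^\infty\times L^2\times L^2$-type estimate with Bernstein giving the $M_1^{1/2}M_2^{1/2}$ gain, and a dyadic summation using the $S^s_\epsilon$ decay (the paper simply bounds each $\|u_{M_j}\|_{L^2}$ by $a(M_j)^{-1/2}E_0^{1/2}$ and sums the remaining kernel, which converges for $s>-\tfrac{7}{6}$). Your extra remarks — the Schur/AM--GM organization of the sum and the observation that when the two comparable high frequencies sit on same-parity (both conjugated or both unconjugated) slots the resonance function is $\approx M_3^4$, giving an even better bound — are correct refinements of points the paper passes over silently.
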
 
\begin{proof}	
We may assume the functions $\widehat{u_j}$ are nonnegative. By using the Lemma \ref{lem:multiplier estimate b_4}, we have
\begin{align*}
\left|E_{1}\left(u\right)\right|=&\left|\int_{P_4} b_4\left(\xi_1,\dots,\xi_4\right) \widehat{u}\left(\xi_1\right)\overline{\widehat{u}}\left(\xi_2\right)\widehat{u}\left(\xi_3\right)\overline{\widehat{u}}\left(\xi_4\right)\right|\\
\lesssim & \sum\limits_{ \substack{ N_1,N_2,N_3,N_4: \\ \left\{N_1,N_2,N_3,N_4 \right\}=\left\{N_{\min},N_{\text{med}}, N_{\max}, N_{\max} \right\} }} \frac{a\left(N_{\min}\right)}{N_{\text{med}}N_{\max}^3}\Vert u_{N_1}u_{N_2}u_{N_3}u_{N_4}\Vert_{L_x^1}. 
\end{align*}
We may assume $N_1\leq N_2\leq N_3 \approx N_4$ by using the symmetry. Therefore, we focus on the summation:
\begin{align*}
&\sum\limits_{1\leq N_1\leq N_2 \leq N_3 \approx N_4} \frac{a\left(N_{1}\right)}{N_2N_3^3}\Vert u_{N_1}u_{N_2}u_{N_3}u_{N_4}\Vert_{L_x^1}\\
\lesssim& \sum\limits_{1\leq N_1\leq N_2 \leq N_3 \approx N_4} \frac{a\left(N_{1}\right)}{N_2N_3^3} \Vert u_{N_1}\Vert_{L_x^{\infty}} \Vert u_{N_2}\Vert_{L_x^{\infty}} \Vert u_{N_3}\Vert_{L_x^2} \Vert u_{N_4}\Vert_{L_x^2}.
\end{align*}
By using Bernstein's inequality, we have
\begin{align*}
&\sum\limits_{1\leq N_1\leq N_2 \leq N_3 \approx N_4} \frac{a\left(N_{1}\right)}{N_2N_3^3}N_1^{\frac{1}{2}}N_2^{\frac{1}{2}} \Vert u_{N_1}\Vert_{L_x^2} \Vert u_{N_2}\Vert_{L_x^2} \Vert u_{N_3}\Vert_{L_x^2} \Vert u_{N_4}\Vert_{L_x^2}\\
\lesssim&
E_0\left(u\right)^2 \sum\limits_{1\leq N_1\leq N_2 \leq N_3 \approx N_4} \frac{a\left(N_{1}\right)^\frac{1}{2}}{N_3^3}\frac{N_1^{\frac{1}{2}}N_2^{-\frac{1}{2}}}{a\left(N_2\right)^{\frac{1}{2}} a\left(N_3\right)  }.
\end{align*}
The remaining summation with respect to $N_1,N_2,N_3$ is easily handled. In fact, 
it is enough to assume $s>-\frac{7}{6}$.
\end{proof}

\begin{prop}\label{proposition: energy estimate 6 linear} Let $a \in S_\epsilon^s$ with $s+\epsilon <-\frac{1}{2}$ and $s>-\frac{3}{4}$.  Then we have
\begin{align*}
\left\vert \int\limits_0^1 R_6\left(u\right)\,dt \right\vert\lesssim \Vert u \Vert_{X^s}^6. 
\end{align*}
\end{prop}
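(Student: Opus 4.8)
The plan is to expand $R_6(u)$ by Plancherel's theorem as a bilinear pairing of two cubic expressions sharing a common output frequency, to absorb the normal-form multiplier $b_4$ using Proposition~\ref{lem:multiplier estimate b_4}, and then to estimate the two cubic factors with the trilinear machinery of Section~\ref{sec:trilinear estimate}, invoking Lemma~\ref{lem:improved trilinear} in the resonant regime. Concretely, since $R_6$ is the six-linear form in $\eqref{eqn:Lambda_6}$ and the two frequency constraints $\xi_1-\xi_2+\xi_3=\xi$ and $\xi_4-\xi_5+\xi_6=\xi$ share the output $\xi$, Plancherel's theorem gives
\begin{align*}
\int_0^1 R_6(u)\,dt = c\,\Re \int_0^1 \langle \mathcal{T}_{b_4}[u,\overline{u},u],\ u\,\overline{u}\,u\rangle_{L_x^2}\,dt,
\end{align*}
where $\mathcal{T}_{b_4}$ is the trilinear Fourier multiplier operator whose output at frequency $\xi$ equals $\int_{\xi_1-\xi_2+\xi_3=\xi} b_4(\xi_1,\xi_2,\xi_3,\xi)\,\widehat{u}(\xi_1)\overline{\widehat{u}}(\xi_2)\widehat{u}(\xi_3)$. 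Thus $\int_0^1 R_6$ is a sum over the output frequency $N_0=|\xi|$ of time integrals of products of two cubic expressions in $u$, each localized at spatial frequency $\approx N_0$.

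To absorb the multiplier, decompose all six inputs and the output dyadically and let $M_1\le M_2\le M_3$ be the sorted sizes of $\{N_0,N_1,N_2,N_3\}$, the two largest being automatically comparable. By Proposition~\ref{lem:multiplier estimate b_4}, on such a block $b_4$ and all its derivatives obey the bound $a(M_1)M_2^{-1}M_3^{-3}$ with the natural frequency scalings; expanding $b_4$ in a Fourier series adapted to the block, these derivative bounds force rapidly decaying coefficients with $\ell^1$ norm $\lesssim a(M_1)M_2^{-1}M_3^{-3}$, and each Fourier mode is a spatial translation, hence an isometry on every function space used here. Therefore, on each block one may replace $\mathcal{T}_{b_4}$ by the scalar $a(M_1)M_2^{-1}M_3^{-3}$ times the plain product $P_{N_0}(u_{N_1}\overline{u_{N_2}}u_{N_3})$, and it remains to bound
\begin{align*}
\sum_{N_0}\sum_{N_1,\dots,N_6} a(M_1)M_2^{-1}M_3^{-3}\int_0^1 \big| \langle P_{N_0}(u_{N_1}\overline{u_{N_2}}u_{N_3}),\ P_{N_0}(u_{N_4}\overline{u_{N_5}}u_{N_6})\rangle_{L_x^2}\big|\,dt
\end{align*}
by $\|u\|_{X^s}^6$.

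Partition $[0,1]$ into $\approx N_0^{-(4s+2)}$ subintervals $I$ of length $N_0^{4s+2}$; summing over them produces the interval-summation loss $N_0^{-(4s+2)}$, the only expensive effect of the short-time structure. On each $I$, both cubic factors have output frequency $N_0$, and each is split according to whether it is of high-high-high-to-low type (all three of its inputs $\gg N_0$) or ``non-resonant'' (at least one input $\lesssim N_0$); when $N_0$ is the largest frequency, i.e., the output is high, there is no subinterval subdivision loss for that factor, and the bilinear and Strichartz estimates of Section~\ref{sec:bilinear estimate} close the bound directly, exactly as in Cases~1--2 of the proof of Proposition~\ref{prop:trilinear estimate}. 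In the remaining, worst, situation both cubic factors are high-high-high-to-low (so $N_0\ll N_j$ for all $j$): here apply Lemma~\ref{lem:improved trilinear} to each factor, splitting off modulations $\geq N_0^4$ and $\leq N_0^4$, so that the high-modulation part lies in $\dot X^{0,-\frac12,1}\hookrightarrow DU_S^2$ and the low-modulation part in $L_t^1L_x^2 + N_0^{-\frac14}L_x^{\frac43}L_t^1$, both with norm $\lesssim N_0^{-3-3s}\|u\|_{X^s}^3$. Then pair the two factors using the dual pairs $(L_t^1L_x^2, L_t^\infty L_x^2)$, $(N_0^{-\frac14}L_x^{\frac43}L_t^1, N_0^{\frac14}L_x^4L_t^\infty)$ --- the latter via the maximal function estimate $\eqref{eqn:maximal function estimate U^4}$ --- and $(DU_S^2, V_S^2)$ (see $\eqref{eqn:Du^p duality}$), together with Hölder's inequality in $L_{t,x}^2$, controlling whichever factor sits in a dual space either by the same Lemma~\ref{lem:improved trilinear} bounds or, where a crude estimate suffices, by Bernstein's inequality and Strichartz --- which still closes because $b_4$ has already supplied the strong smoothing $M_2^{-1}M_3^{-3}$ and the weight $a(M_1)$. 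Collecting everything, the dyadic sums over $N_0$ and $N_1,\dots,N_6$ converge precisely when $s>-\frac34$, while the $a(M_1)\approx M_1^{2s+}$ weight is summable over the lowest frequency exactly because $s+\epsilon<-\frac12$, just as in the proof of Proposition~\ref{prop:E_1 energy}.

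The hard part is this last case, both cubic factors of high-high-high-to-low type: there the interval-summation loss $N_0^{-(4s+2)}$ compounds with the inner partition loss built into Lemma~\ref{lem:improved trilinear}, and only the combination of the extra $M_3^2$ of smoothing in $b_4$ coming from the fourth-order dispersion, the modulation gain from the resonance identity $\xi_1^4-\xi_2^4+\xi_3^4-\xi^4=(\xi-\xi_1)(\xi-\xi_3)(\xi_1^2+\xi_2^2+\xi_3^2+\xi^2+2(\xi_1+\xi_3)^2)$, and the local smoothing and maximal function estimates --- all packaged inside Lemma~\ref{lem:improved trilinear} --- lets the dyadic sums converge; this is exactly what pins the regularity threshold at $s>-\frac34$. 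A secondary delicate point is the bookkeeping of $N_0$-powers when the low-modulation pieces, which live in the mixed space $L_t^1L_x^2 + N_0^{-\frac14}L_x^{\frac43}L_t^1$, are paired against the complementary factor in the matching dual space.
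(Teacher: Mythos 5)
Your overall skeleton (Plancherel pairing of two cubic expressions sharing the output frequency, separation of the multiplier $b_4$ via Proposition \ref{lem:multiplier estimate b_4}, interval decomposition at scale $N_0^{4s+2}$, and Lemma \ref{lem:improved trilinear} for the high-high-high-to-low factor) matches the paper, but the execution plan has a genuine gap in exactly the case you call the hard one. Lemma \ref{lem:improved trilinear} controls a cubic expression only in the spaces $L_t^1L_x^2$, $N^{-1/4}L_x^{4/3}L_t^1$ and $\dot{X}^{0,-\frac{1}{2},1}$; none of these is dual to another, and the lemma gives no bound in $L_t^\infty L_x^2$, $L_x^4L_t^\infty$, or $V_S^2/\dot{X}^{0,\frac{1}{2},\infty}$. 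So "apply Lemma \ref{lem:improved trilinear} to each factor, then pair in dual spaces" cannot be carried out as stated: whichever factor must sit in the dual space needs a different estimate, and your fallback "Bernstein and Strichartz" is not enough for the high-modulation piece, since that pairing requires a $V_S^2/\dot{X}^{0,\frac{1}{2},\infty}$-type bound of a \emph{triple product}, which is not available crudely. The paper never pairs two applications of Lemma \ref{lem:improved trilinear}; it applies the lemma to one cubic factor only and estimates the $b_4$-weighted factor directly, using bilinear Strichartz plus Bernstein in $L_{t,x}^2$ (upgrading the $L_t^1L_x^2$ piece of $f_N$ by a modulation Bernstein factor $\sigma^{1/2}\lesssim M_3^2$), and it handles the $\dot{X}^{0,-\frac{1}{2},1}$ part of $f_N$ by transferring the large modulation onto one of the inputs of the other cubic factor and using the $\sigma^{-1/2}$ gain there. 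That modulation-transfer step is an idea your proposal is missing, and without it the dual pairing does not close.

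Two further points. First, your case decomposition has a hole: the mixed regime in which the second cubic factor is high-high-high-to-low but the first is not, with the output $N_0$ strictly below the top frequency (the paper's Subcases 1.a and 1.b, with 1.b being one of the two cases that actually force $s>-\frac{3}{4}$), is covered by neither of your two described mechanisms; likewise, the regime where the second factor has an input $\lesssim N_0$ but the top frequency is $\gg N_0$ does not close "exactly as in Cases 1--2 of Proposition \ref{prop:trilinear estimate}" --- there the interval summation loss occurs at the scale of the largest of the six frequencies and must be beaten by bilinear gains, which is the content of the paper's lengthy Case 2 (nine subcases, with thresholds such as $-\frac{5}{6}$, $-\frac{15}{16}$). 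Second, no numerology is carried out anywhere, so the claim that all dyadic sums converge precisely for $s>-\frac{3}{4}$ is asserted rather than verified; given the structural issues above, the proposal as written does not establish the proposition.
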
 
\begin{proof}
We consider a dyadic decomposition and represent the above integral in the frequency side as a dyadic sum of terms of the form
\begin{align*}
&\int_0^1\int\limits_{\substack{\xi_1-\xi_2+\xi_3=\xi \\ \xi_4-\xi_5+\xi_6=\xi \\ \vert \xi \vert \approx N} } b_4\left(\xi_1,\xi_2,\xi_3,\xi \right)\widehat{u_{N_1}}\left(\xi_1\right)\overline{\widehat{u_{N_2}}\left(\xi_2\right) }\widehat{u_{N_3}}\left(\xi_3\right)\varphi_N\left(\xi\right)\left( \overline{\widehat{u_{N_4}}\left(\xi_4\right) }      \widehat{u_{N_5}}\left(\xi_5\right) \overline{\widehat{u_{N_6}\ }}\left(\xi_6\right)     \right)  \,dt.
\end{align*}  	
Here $\varphi_N$ is the Fourier multiplier for $P_N$.
There are two cases to consider:

\textbf{Case 1:} $N\ll N_4,N_5,N_6$. Then for the frequency $N$ factor we take advantage of Lemma \ref{lem:improved trilinear}. We denote
\begin{align*}
\left\{ N,N_1,N_2,N_3 \right\}=\left\{M_1,M_2,M_3,M_3    \right\}, \quad M_1\leq M_2 \leq M_3,
\intertext{and}
f_{N}=\chi_I\sum\limits_{\substack{N_4,N_5,N_6: \\ N\ll N_4,N_5,N_6 }} P_{N}\left( \overline{u_{N_4}}      u_{N_5} \overline{u_{N_6}\ }     \right), \quad \vert I \vert=N^{4s+2}.
\end{align*}
Since $b_4$ is smooth in each variable on the corresponding dyadic scale, we can expand it into a rapidly convergent Fourier series. This allows us to separate variables and reduce the problem to the case when $b_4$ is of product type 
\begin{align*} 
b_4\left(\xi_1,\xi_2,\xi_3,\xi\right)=\frac{a\left(M_1\right)}{M_2 M_3^3} \chi^1\left(\xi_1\right)\chi^2\left(\xi_2\right)\chi^3\left(\xi_3\right)\chi^0\left(\xi\right), 
\end{align*}
where $\chi^j$'s are unit size bump functions which are smooth on the respective dyadic scales . Since the symbol $\chi^j\left(D\right)$ are bounded in $U_{S}^2$ space, we can discard $\chi^1,\chi^2$ and $\chi^3$ and incorporate $\chi^0$ into $P_{N}$.
In the following, we drop the complex conjugate sign.
Therefore, we have reduced the problem to the case 
\begin{align*}
&\sum\limits_{\substack{ \left\{N,N_1,N_2,N_3\right\}\\=\left\{M_1,M_2,M_3,M_3 \right\} } } \frac{a\left(M_1\right)}{M_2M_3^{3}}    \int_0^1\int\limits_\mathbb{R} u_{N_1}u_{N_2}u_{N_3}  \sum\limits_{\substack{N_4,N_5,N_6: \\ N\ll N_4,N_5,N_6 }} P_{N}\left( u_{N_4}      u_{N_5} u_{N_6}      \right)  \,dx \,dt\\
=&\sum\limits_{\substack{ \left\{N,N_1,N_2,N_3\right\}\\=\left\{M_1,M_2,M_3,M_3 \right\} } } \frac{a\left(M_1\right)}{M_2M_3^{3}}  \sum\limits_{\substack{I\subset [0,1] \\ \vert I \vert=N^{4s+2} }}  \int\limits_I \int\limits_\mathbb{R} u_{N_1}u_{N_2}u_{N_3}  f_N  \,dx \,dt\\
=&\sum\limits_{\substack{ \left\{N,N_1,N_2,N_3\right\}\\=\left\{M_1,M_2,M_3,M_3 \right\} } } \Lambda_1.
\end{align*}	
We decompose $f_N$ into low modulation output and high modulation output
\begin{align*}
Q_{\lesssim M_3^4} \sum\limits_{N\ll N_4,N_5,N_6} \chi_IP_N\left(u_{N_4}u_{N_5}u_{N_6}\right)+Q_{\gg M_3^4 } \sum\limits_{N\ll N_4,N_5,N_6}\chi_I P_N\left(u_{N_4}u_{N_5}u_{N_6}\right)
\end{align*} 		 
 
\textbf{Subcase 1.a} $N=M_3$. First, we consider the low modulation output.
For the $L_t^1L_x^2$ term in $f_N$, we estimate $u_{M_1},u_{M_2}$ in $L_{t,x}^{\infty}$ and $u_{M_3}$ in $L_t^\infty L_x^2$ by using the Bernstein's inequality and Lemma \ref{lem:improved trilinear}:
\begin{align*}
\vert \Lambda_1 \vert \lesssim&  \frac{a\left(M_1\right)}{M_2M_3^{3}}  M_3^{-4s-2} \sup\limits_{\substack{I\subset [0,1]\\ \vert I \vert=M_3^{4s+2}}} \left\vert \int\limits_I \int\limits_\mathbb{R} u_{M_1}u_{M_2}u_{M_3} Q_{\lesssim M_3^4}f_N \,dx\,dt \right\vert\\
\lesssim&\frac{a\left(M_1\right)}{M_2M_3^{3}}  M_3^{-4s-2} \sup\limits_{\substack{I\subset [0,1] \\ \vert I \vert=M_3^{4s+2} }} \Vert \chi_I u_{M_1} \Vert_{L_{t,x}^\infty}\Vert \chi_I u_{M_2} \Vert_{L_{t,x}^{\infty}}\Vert \chi_I u_{M_3}\Vert_{L_t^\infty L_x^2}\Vert Q_{\lesssim M_3^4 }f_N \Vert_{L_t^1L_x^2}\\
\lesssim& \frac{a\left(M_1\right)}{M_2M_3^{3}} M_3^{-4s-2}\left(M_1^{\frac{1}{2}} M_2^{\frac{1}{2}} \right)\left(M_3^{-s}M_2^{-s}M_1^{-s} \right)M_3^{-3-3s} \Vert u_{M_1} \Vert_{X^s}  \Vert u_{M_2} \Vert_{X^s}  \Vert u_{M_3} \Vert_{X^s} \Vert u \Vert_{X^s}^3\\
\lesssim&  M_3^{-8s-8} a\left(M_1\right) M_1^{-s+\frac{1}{2}}M_2^{-\frac{1}{2}-s}\Vert u_{M_1} \Vert_{X^s}  \Vert u_{M_2} \Vert_{X^s}  \Vert u_{M_3} \Vert_{X^s} \Vert u \Vert_{X^s}^3.
\end{align*}
Therefore, we consider the summation
\begin{align*} 
 \sum\limits_{M_1\leq M_2 \leq M_3 }    M_3^{-8s-8}a\left(M_1\right)M_1^{-s+\frac{1}{2}}M_2^{-\frac{1}{2}-s}\Vert u_{M_1} \Vert_{X^s}  \Vert u_{M_2} \Vert_{X^s}  \Vert u_{M_3} \Vert_{X^s} \Vert u \Vert_{X^s}^3. 
\end{align*} 
This summation can be dealt with $s\geq-\frac{17}{18}$. 

For the $L_x^{\frac{4}{3} }L_t^1$ term in $f_N$, we estimate $u_{M_1},u_{M_2}$ in $L^\infty$ and $u_{M_3}$ in $L_x^4L_t^{\infty}$. Then by using Bernstein's inequality, maximal function estimate and Lemma \ref{lem:improved trilinear}, we obtain
\begin{align*}
\vert \Lambda_1 \vert \lesssim&  \frac{a\left(M_1\right)}{M_2M_3^{3}}  M_3^{-4s-2} \sup\limits_{\substack{I\subset [0,1]\\ \vert I \vert=M_3^{4s+2}}} \left\vert \int\limits_I \int\limits_\mathbb{R} u_{M_1}u_{M_2}u_{M_3} Q_{\lesssim M_3^4}f_N \,dx\,dt \right\vert\\
\lesssim& \frac{a\left(M_1\right)}{M_2M_3^{3}}  M_3^{-4s-2} \sup\limits_{\substack{I\subset [0,1] \\ \vert I \vert=M_3^{4s+2} }} \Vert \chi_I u_{M_1} \Vert_{L_{t,x}^\infty}\Vert \chi_I u_{M_2} \Vert_{L_{t,x}^{\infty}}\Vert \chi_I u_{M_3}\Vert_{L_x^4 L_t^\infty}\Vert Q_{\lesssim M_3^4 }f_N \Vert_{L_x^{\frac{4}{3}}L_t^1}\\
\lesssim & \frac{a\left(M_1\right)}{M_2M_3^{3}} M_3^{-4s-2}\left(M_1^{\frac{1}{2}} M_2^{\frac{1}{2}} M_3^{\frac{1}{4}} \right)\left(M_3^{-s}M_2^{-s}M_1^{-s} \right)M_3^{-\frac{13}{4}-3s} \Vert u_{M_1} \Vert_{X^s}  \Vert u_{M_2} \Vert_{X^s}  \Vert u_{M_3} \Vert_{X^s} \Vert u \Vert_{X^s}^3,
\end{align*}
which gives the same result as in the previous case. Therefore, the summation with respect to $M_1,M_2,M_3$ can be dealt with $s\geq-\frac{17}{18}$. 
 
For the high modulation part of $f_N$ at modulation $\sigma \gg M_3^4 $, we observe that at least one of three factors $\chi_I u_{M_1}, \chi_Iu_{M_2}, \chi_I u_{M_3}$ must have modulation at least $\sigma \gg M_3^4 $. We may assume $Q_{\sigma }\left(\chi_I u_{M_1}\right)$. This is the worst case. We bound $Q_{\sigma}\chi_Iu_{M_1}$ in $L^2$ and the other two $u_{M_2},u_{M_3}$ in $L^\infty$. Observe that
\begin{align*}  
\vert \Lambda_1 \vert \lesssim&  \frac{a\left(M_1\right)}{M_2M_3^{3}}  M_3^{-4s-2} \sup\limits_{\substack{I\subset [0,1]\\ \vert I \vert=M_3^{4s+2}}} \left\vert \sum\limits_{\sigma \gg M_3^4} \int\limits_\mathbb{R} \int\limits_\mathbb{R}  \chi_Iu_{M_1}\chi_Iu_{M_2} \chi_Iu_{M_3} Q_{\sigma
	 }f_N \,dx\,dt \right\vert\\
 \lesssim &  \frac{a\left(M_1\right)}{M_2M_3^{3}}  M_3^{-4s-2} \left(M_2^{\frac{1}{2}} M_3^{\frac{1}{2}}\right)\left(M_1^{-s}M_2^{-s}M_3^{-s} \right)M_3^{-3-3s}\Vert u_{M_1} \Vert_{X^s} \Vert u_{M_2} \Vert_{X^s} \Vert u_{M_3} \Vert_{X^s}          \Vert u \Vert_{X^s}^3\\
 \lesssim & a\left(M_1\right)M_1^{-s}M_2^{-\frac{1}{2}-s}M_3^{-8s-\frac{15}{2}} \Vert u_{M_1} \Vert_{X^s} \Vert u_{M_2} \Vert_{X^s} \Vert u_{M_3} \Vert_{X^s}          \Vert u \Vert_{X^s}^3.
\end{align*} 
The summation with respect to $M_1,M_2,M_3$ is handled if $s\geq-\frac{8}{9}$.

\textbf{Subcase 1.b} $N=M_2\ll M_3$. First, we consider the low modulation output.
 For the $L_t^1L_x^2$ term in $ Q_{\lesssim M_3^4 }f_N$, we estimate $u_{M_1}u_{M_3}u_{M_3}$ in $L_{t,x}^2$. By considering the interval summation loss and using the Bernstein inequality, Lemma \ref{lem:improved trilinear}, we have
 \begin{align*}
 \vert \Lambda_1 \vert \lesssim&  \frac{a\left(M_1\right)}{M_2M_3^{3}}  M_2^{-4s-2} \sup\limits_{\substack{I\subset [0,1]\\ \vert I \vert=M_2^{4s+2}}} \left\vert \int\limits_I \int\limits_\mathbb{R} u_{M_1}u_{M_3}u_{M_3} Q_{\lesssim M_3^4}f_N \,dx\,dt \right\vert\\
 \lesssim&\frac{a\left(M_1\right)}{M_2M_3^{3}}  M_2^{-4s-2} \sup\limits_{\substack{I\subset [0,1] \\ \vert I \vert=M_2^{4s+2} }} \Vert \chi_Iu_{M_1}u_{M_3}u_{M_3} \Vert_{L_t^2 L_x^2\left(I\times \mathbb{R}\right)} \Vert Q_{\lesssim M_3^4 }f_N \Vert_{L_t^2L_x^2}\\
 \lesssim&  \frac{a\left(M_1\right)}{M_2M_3^{3}}  M_2^{-4s-2}  \sup\limits_{\substack{I\subset [0,1] \\ \vert I \vert=M_2^{4s+2} }} \left( \sum\limits_{\substack{J\subset I\\ \vert J \vert =M_3^{4s+2}}}\Vert \chi_J u_{M_1}u_{M_3}u_{M_3} \Vert^2_{L_t^2 L_x^2\left(J\times \mathbb{R}\right)} \right)^{\frac{1}{2}} M_3^2 \Vert Q_{\lesssim M_3^4 }f_N \Vert_{L_t^1L_x^2}\\
 \lesssim& \frac{a\left(M_1\right)}{M_2M_3^{3}}  M_2^{-4s-2} \left( M_2^{4s+2}M_3^{-4s-2} \right)^{\frac{1}{2}}  \sup\limits_{\substack{J\subset [0,1] \\ \vert J \vert=M_3^{4s+2} }} \Vert \chi_J u_{M_1} u_{M_3} \Vert_{L_{t,x}^2}\Vert \chi_J u_{M_3} \Vert_{L_{t,x}^\infty} M_3^2 M_2^{-3-3s} \Vert u\Vert_{X^s}^3.
 \end{align*}
 Hence, by using the bilinear estimates and Bernstein's inequality, we have
 \begin{align*}
\vert \Lambda_1 \vert  \lesssim&\frac{a\left(M_1\right)}{M_2M_3^{3}} \left(M_2^{-2s-1}M_3^{-2s-1} \right) \left(M_1^{-s}M_3^{-s}M_3^{-s} \right)M_3^{-\frac{3}{2}} M_3^{\frac{1}{2}}M_3^2 M_2^{-3-3s} \Vert u_{M_3} \Vert_{X^s} \Vert u_{M_3} \Vert_{X^s} \Vert u \Vert_{X^s}^4 \\
 \lesssim & a\left(M_1\right)M_1^{-s} M_2^{-5s-5}M_3^{-4s-3} \Vert u_{M_3} \Vert_{X^s} \Vert u_{M_3} \Vert_{X^s} \Vert u \Vert_{X^s}^4.
 \end{align*}
 Therefore, the summation with respect to $M_1,M_2,M_3$ is handled if $s\geq - \frac{3}{4}$.

 Next, we consider the $L_x^{\frac{4}{3} }L_t^1$ term in $f_N$. By considering the interval summation loss and using the Bernstein's inequality, Lemma \ref{lem:improved trilinear}, we have
 \begin{align*}
 \vert \Lambda_1 \vert \lesssim&  \frac{a\left(M_1\right)}{M_2M_3^{3}}  M_2^{-4s-2} \sup\limits_{\substack{I\subset [0,1]\\ \vert I \vert=M_2^{4s+2}}} \left\vert \int\limits_I \int\limits_\mathbb{R} u_{M_1}u_{M_3}u_{M_3} Q_{\lesssim M_3^4}f_N \,dx\,dt \right\vert\\
 \lesssim&\frac{a\left(M_1\right)}{M_2M_3^{3}}  M_2^{-4s-2} \sup\limits_{\substack{I\subset [0,1] \\ \vert I \vert=M_2^{4s+2} }} \Vert \chi_Iu_{M_1}u_{M_3}u_{M_3} \Vert_{L_t^2 L_x^2\left(I\times \mathbb{R}\right)} \Vert Q_{\lesssim M_3^4 }f_N \Vert_{L_t^2L_x^2}\\
 \lesssim&  \frac{a\left(M_1\right)}{M_2M_3^{3}}  M_2^{-4s-2}  \left( M_2^{4s+2}M_3^{-4s-2} \right)^{\frac{1}{2}}&\\
 \times&  \sup\limits_{\substack{J\subset [0,1] \\ \vert J \vert=M_3^{4s+2} }} \Vert \chi_J u_{M_1} u_{M_3} \Vert_{L_{t,x}^2}\Vert \chi_J u_{M_3} \Vert_{L_{t,x}^\infty} \left(M_3^4\right)^\frac{1}{2}N^\frac{1}{4} \Vert Q_{\lesssim M_3^4 }f_N \Vert_{L_x^\frac{4}{3}L_t^1}\\
 \lesssim& \frac{a\left(M_1\right)}{M_2M_3^{3}}  M_2^{-4s-2} \left( M_2^{4s+2}M_3^{-4s-2} \right)^{\frac{1}{2}}&\\
   &\times \sup\limits_{\substack{J\subset [0,1] \\ \vert J \vert=M_3^{4s+2} }} \Vert \chi_J u_{M_1} u_{M_3} \Vert_{L_{t,x}^2}\Vert \chi_J u_{M_3} \Vert_{L_{t,x}^\infty} M_3^2 M_2^{\frac{1}{4}} M_2^{-\frac{13}{4}-3s} \Vert u\Vert_{X^s}^3.
 \end{align*}
  Hence, by using the bilinear estimates and Bernstein inequality, we have
 \begin{align*}
 \vert \Lambda_1 \vert \lesssim  a\left(M_1\right)M_1^{-s} M_2^{-5s-5}M_3^{-4s-3} \Vert u_{M_3} \Vert_{X^s} \Vert u_{M_3} \Vert_{X^s} \Vert u \Vert_{X^s}^4.
 \end{align*}
 This summation with respect to $M_1,M_2,M_3$ is also handled if $s\geq - \frac{3}{4}$.

 For the high modulation part of $f_N$ at modulation $\sigma \gg M_3^4 $, we observe that one of three factors $\chi_I u_{M_1}, \chi_I u_{M_3}, \chi_Iu_{M_3}$ must have modulation at least $\sigma $. We may assume $Q_{\sigma}\left(\chi_Iu_{M_1}\right)$. This is the worst case. We bound $Q_{\sigma}\left( \chi_Iu_{M_1}\right)$ in $L^2$ and the other two $\chi_Iu_{M_2},\chi_Iu_{M_3}$ in $L^\infty$. Observe that
 \begin{align*}
 \vert \Lambda_1 \vert \lesssim&  \frac{a\left(M_1\right)}{M_2M_3^{3}}  M_2^{-4s-2} \sup\limits_{\substack{I\subset [0,1]\\ \vert I \vert=M_2^{4s+2}}} \left\vert  \sum\limits_{\sigma \gg M_3^4}\int\limits_\mathbb{R} \int\limits_\mathbb{R} \chi_Iu_{M_1}\chi_Iu_{M_3}\chi_Iu_{M_3} Q_{\sigma }f_N \,dx\,dt \right\vert\\
 \lesssim&\frac{a\left(M_1\right)}{M_2M_3^{3}}  M_2^{-4s-2} \sup\limits_{\substack{I\subset [0,1] \\ \vert I \vert=M_2^{4s+2} }} \sum\limits_{\sigma \gg M_3^4}\Vert Q_{\sigma}\left(\chi_I u_{M_1}\right) \chi_Iu_{M_3}\chi_Iu_{M_3} \Vert_{L_t^2 L_x^2\left(I\times \mathbb{R}\right)} \Vert Q_{\sigma }f_N \Vert_{L_t^2L_x^2}\\
 \lesssim&  \frac{a\left(M_1\right)}{M_2M_3^{3}}  M_2^{-4s-2}  \sup\limits_{\substack{I\subset [0,1] \\ \vert I \vert=M_2^{4s+2} }}\sum\limits_{\sigma \gg M_3^4} \left( \sum\limits_{\substack{J\subset I\\ \vert J \vert =M_3^{4s+2}}}\Vert Q_\sigma \left(\chi_I u_{M_1}\right) u_{M_3}u_{M_3} \Vert^2_{L_t^2 L_x^2\left(J\times \mathbb{R}\right)} \right)^{\frac{1}{2}} \Vert Q_{\sigma  }f_N \Vert_{L_t^2L_x^2}\\
 \lesssim& \frac{a\left(M_1\right)}{M_2M_3^{3}} \left(M_2^{-2s-1}M_3^{-2s-1} \right)& \\
 &\times \sup\limits_{\substack{I\subset [0,1]\\ \vert I \vert=M_2^{4s+2
 }}} \sum\limits_{\sigma \gg M_3^4}\sup\limits_{\substack{J\subset I \\ \vert J \vert=M_3^{4s+2} }} \Vert Q_{\sigma}\left( \chi_I u_{M_1}\right) \Vert_{L_{t,x}^2} \Vert \chi_J u_{M_3} \Vert_{L_{t,x}^\infty} \Vert \chi_J u_{M_3} \Vert_{L_{t,x}^\infty}  \Vert Q_{\sigma  }f_N \Vert_{L_t^2L_x^2}.
 \end{align*} 
 Hence, by using Bernstein inequality, we have  
\begin{align*}
\vert \Lambda_1 \vert  \lesssim&\frac{a\left(M_1\right)}{M_2M_3^{3}} \left(M_2^{-2s-1}M_3^{-2s-1} \right) \left(M_1^{-s}M_3^{-s}M_3^{-s} \right) M_3 M_2^{-3-3s} \Vert u_{M_3} \Vert_{X^s} \Vert u_{M_3} \Vert_{X^s} \Vert u \Vert_{X^s}^4 \\
\lesssim & a\left(M_1\right)M_1^{-s} M_2^{-5s-5}M_3^{-4s-3} \Vert u_{M_3} \Vert_{X^s} \Vert u_{M_3} \Vert_{X^s} \Vert u \Vert_{X^s}^4.
\end{align*}
Hence, the summation with respect to $M_1,M_2,M_3$ is also handled if $s\geq - \frac{3}{4}$.

\textbf{Subcase 1.c} $N=M_1\ll M_2$. First, we consider the low modulation output.
For the $L_t^1L_x^2$ term in $f_N$, we estimate $u_{M_2}u_{M_3}u_{M_3}$ in $L_{t,x}^2$. By considering the interval summation loss with square summability and using Lemma \ref{lem:improved trilinear}, we have
\begin{align*}
\vert \Lambda_1 \vert \lesssim&  \frac{a\left(M_1\right)}{M_2M_3^{3}}  M_1^{-4s-2} \sup\limits_{\substack{I\subset [0,1]\\ \vert I \vert=M_1^{4s+2}}} \left\vert \int\limits_I \int\limits_\mathbb{R} u_{M_2}u_{M_3}u_{M_3} Q_{\lesssim M_3^4}f_N \,dx\,dt \right\vert\\
\lesssim&\frac{a\left(M_1\right)}{M_2M_3^{3}}  M_1^{-4s-2} \sup\limits_{\substack{I\subset [0,1] \\ \vert I \vert=M_1^{4s+2} }} \Vert \chi_Iu_{M_2}u_{M_3}u_{M_3} \Vert_{L_t^2 L_x^2\left(I\times \mathbb{R}\right)} \Vert Q_{\lesssim M_3^4 }f_N \Vert_{L_t^2L_x^2}\\
\lesssim&  \frac{a\left(M_1\right)}{M_2M_3^{3}}  M_1^{-2s-1}M_3^{-2s-1}   \sup\limits_{\substack{J\subset [0,1] \\ \vert J \vert=M_3^{4s+2} }} \Vert \chi_J u_{M_2} u_{M_3} \Vert_{L_{t,x}^2}\Vert \chi_J u_{M_3} \Vert_{L_{t,x}^\infty}
 M_3^2 \Vert Q_{\lesssim M_3^4 }f_N \Vert_{L_t^1L_x^2}.
\end{align*}
Observe that even if $M_2 \approx M_3$ as two of the $M_3$ sized frequencies should be $M_3$ separatd in order for output frequency to be localized at $N$. Therefore, by using the bilinear estimates and Lemma \ref{lem:improved trilinear}, we have
\begin{align*}
\vert \Lambda_1 \vert  \lesssim&\frac{a\left(M_1\right)}{M_2M_3^{3}} \left(M_1^{-2s-1}M_3^{-2s-1} \right) \left(M_2^{-s}M_3^{-s}M_3^{-s} \right)M_3^{-\frac{3}{2}} M_3^{\frac{1}{2}} M_3^2 M_1^{-3-3s} \Vert u_{M_3} \Vert_{X^s} \Vert u_{M_3} \Vert_{X^s} \Vert u \Vert_{X^s}^4 \\
\lesssim & a\left(M_1\right)M_1^{-5s-4} M_2^{-1-s}M_3^{-4s-3} \Vert u_{M_3} \Vert_{X^s} \Vert u_{M_3} \Vert_{X^s} \Vert u \Vert_{X^s}^4.
\end{align*}
Therefore, the summation with respect to $M_1,M_2,M_3$ is handled if $s \geq -\frac{3}{4}$.

For the $L_x^{\frac{4}{3} }L_t^1$ term in $f_N$, we can proceed as in Subcase 1.b. In this case, the summation with respect to $M_1,M_2,M_3$ is handled if $s \geq -\frac{3}{4}$.

For the high modulation part of $f_N$ at modulation $\sigma \gg M_3^4 $, we observe that one of three factors $u_{M_2}, u_{M_3},u_{M_3}$ must have modulation at least $\sigma $. We may assume $Q_{\sigma}u_{M_2}$. This is the worst case. We bound $Q_{\sigma}u_{M_2}$ in $L^2$ and the other two $u_{M_3},u_{M_3}$ in $L^\infty$. By proceeding as in Subcase 1.b, we obtain 
\begin{align*} 
\vert \Lambda_1 \vert  
\lesssim &\; a\left(M_1\right) M_1^{-5s-4}M_2^{-1-s}M_3^{-4s-3}  \Vert u_{M_3} \Vert_{X^s} \Vert u_{M_3} \Vert_{X^s} \Vert u \Vert_{X^s}^4.
\end{align*} 
Hence, the summation with respect to $M_1,M_2,M_3$ is also handled if $s\geq - \frac{3}{4}$.
 
\textbf{Case 2.} $N\gtrsim \min\left\{ N_4,N_5,N_6 \right\}$. Without loss of generality we may assume 
\begin{align*}
N_1\leq N_2\leq N_3, \quad\quad N_4\leq N_5\leq N_6.
\end{align*}
Then we must have
\begin{align*} 
N_4 \lesssim N \lesssim N_3 
\end{align*}
We denote
\begin{align*}
\left\{ N,N_1,N_2,N_3 \right\}=\left\{M_1,M_2,M_3,M_3    \right\}, \quad M_1\leq M_2 \leq M_3.
\end{align*} 
We may expand the Fourier multiplier $\varphi_N$ for $P_N$ into a Fourier integral. For a Schwartz function $\rho_N$, we have
\begin{align}\label{eqn:Fourier multiplier decomposition}
\varphi_N(\xi)=\int \rho_N(y)e^{i\xi y}\,dy=\int \rho_N(y) e^{i\xi_4y}e^{-i\xi_5y}e^{i\xi_6y}\,dy ,\quad \xi=\xi_4-\xi_5+\xi_6. 
\end{align}
Here we can separate the exponential into three factors since in the domain of integration we have $\xi=\xi_4-\xi_5+\xi_6.$ The complex exponentials are bounded symbols and thus bounded on $U_S^2$. Therefore it can be harmlessly absorbed into $u_{N_4},u_{N_5},u_{N_6}$. Moreover we have $\Vert \rho_N \Vert_{L^1}\lesssim 1$ uniformly in $N$. Plugging in the expression $\eqref{eqn:multiplier estimate in energy estimate}$ and absorbing the factors 
originating from $\eqref{eqn:Fourier multiplier decomposition}$ into the $\widehat{u_i}$, we are left with estimating 
\begin{align*}
&\sum\limits_{\substack{ N,N_1,N_2,N_3,N_4,N_5,N_6 : \\ \left\{N,N_1,N_2,N_3\right\} =\left\{M_1,M_2,M_3,M_3  \right\} }} \frac{a\left(M_1\right)}{M_2M_3^{3}}    \int_0^1\int\limits_\mathbb{R} u_{N_1}u_{N_2}u_{N_3} u_{N_4}      u_{N_5} u_{N_6}       \,dx \,dt\\
=&\sum\limits_{\substack{ N,N_1,N_2,N_3,N_4,N_5,N_6 : \\ \left\{N,N_1,N_2,N_3\right\} =\left\{M_1,M_2,M_3,M_3  \right\} }} \Lambda_2. 
\end{align*}	 

\textbf{Subcase 2.a} $N=M_3$. In this case we have $N_4\lesssim N=M_3,  N=M_3\lesssim N_6$.

\textbf{Subcase 2.a.i} $N_6\approx N_5\gg M_3$. We use the bilinear estimates for the products $u_{M_3}u_{N_5}$ and $u_{N_4}u_{N_6}$ and the $L^\infty$ bound for $u_{M_1},u_{M_2}$. Then by considering the interval summation loss $N_6^{-4s-2}$, we have
\begin{align*}
\vert \Lambda_2 \vert \lesssim& \frac{a\left(M_1\right)}{M_2M_3^{3}} N_6^{-4s-2}\left(M_1^{-s}M_2^{-s}M_3^{-s} N_4^{-s}N_6^{-2s} \right)\left(M_1^{\frac{1}{2}} M_2^{\frac{1}{2}}\right)\left(N_6^{-\frac{3}{2}} N_6^{-\frac{3}{2}} \right)\prod\limits_{j=1}^3\Vert u_{M_j} \Vert_{X^s} \Vert u_{N_4} \Vert_{X^s} \Vert u_{N_6} \Vert_{X^s}^2\\
\lesssim& a\left(M_1\right)M_1^{\frac{1}{2}-s}M_2^{-\frac{1}{2}-s}M_3^{-s-3}N_4^{-s}N_6^{-6s-5}\prod\limits_{j=1}^3\Vert u_{M_j} \Vert_{X^s} \Vert u_{N_4} \Vert_{X^s}  \Vert u_{N_6} \Vert_{X^s}^2.
\end{align*}
Hence the summation with respect to $N,N_1,\dots,N_6$ is handled if $s\geq - \frac{5}{6}$.

\textbf{Subcase 2.a.ii} $N_5\leq N_6 \approx M_3$ and $M_2\ll M_3$. Then we use the bilinear estimates for $u_{M_2}u_{M_3}$ and $L^\infty$ bound for $u_{M_1}$ and the $L^6$ Strichartz estimate for the $u_{N_4},u_{N_5},u_{N_6}$. By considering the interval summation loss, we have
\begin{align*}
\vert \Lambda_2 \vert \lesssim& \frac{a\left(M_1\right)}{M_2M_3^{3}} M_3^{-4s-2}\left(M_1^{-s}M_2^{-s}M_3^{-s} N_4^{-s}N_5^{-s}M_3^{-s} \right)   M_3^{-\frac{3}{2}}M_1^{\frac{1}{2}}  \left(N_4^{-\frac{1}{3}} N_5^{-\frac{1}{3}} M_3^{-\frac{1}{3}} \right)\\
&\quad\quad\quad\quad \times                        
\prod\limits_{j=1}^3\Vert u_{M_j} \Vert_{X^s} \Vert u_{N_4} \Vert_{X^s} \Vert u_{N_5} \Vert_{X^s} \Vert u_{M_3} \Vert_{X^s}\\
\lesssim & a\left(M_1\right)M_1^{\frac{1}{2}-s}M_2^{-1-s}M_3^{-6s-\frac{41}{6}}N_4^{-s-\frac{1}{3}}N_5^{-s-\frac{1}{3}} \Vert u_{M_3} \Vert_{X^s} \Vert u_{M_3} \Vert_{X^s} \Vert u\Vert_{X^s}^4.
\end{align*} 
Hence the summation with respect to $N,N_1,\dots,N_6$ is handled if $s\geq - \frac{15}{16}$.

\textbf{Subcase 2.a.iii} $N_5\leq N_6 \approx M_3$ and $M_2\approx M_3$. In this case, we use the $L^6$ Strichartz estimate for all the factors. Then we have
\begin{align*}
\vert \Lambda_2 \vert \lesssim& \frac{a\left(M_1\right)}{M_2M_3^{3}} M_3^{-4s-2}\left(M_1^{-s}M_3^{-2s} N_4^{-s}N_5^{-s}M_3^{-s} \right)     \left(M_1^{-\frac{1}{3}} M_3^{-\frac{2}{3}} N_4^{-\frac{1}{3}}N_5^{-\frac{1}{3}}M_3^{-\frac{1}{3}} \right)  \prod\limits_{j=1}^6\Vert u_{M_j} \Vert_{X^s}\\
\lesssim & a\left(M_1\right) M_1^{-s-\frac{1}{3}}N_4^{-s-\frac{1}{3}}N_5^{-s-\frac{1}{3}}M_3^{-7s-7} \Vert u_{M_3} \Vert_{X^s} \Vert u_{M_3} \Vert_{X^s} \Vert u\Vert_{X^s}^4.
\end{align*}
Hence the summation with respect to $N,N_1,\dots,N_6$ is handled if $s\geq - \frac{23}{27}$. 

\textbf{Subcase 2.b} $N=M_2\ll M_3$. In this case we have $N_4\lesssim N=M_2$.   
 
\textbf{Subcase 2.b.i} $N_6\approx N_5\gg M_3$. In this case, we use the bilinear estimates for the products $u_{M_3}u_{N_5}$ and $u_{N_4}u_{N_6}$ and the $L^\infty$ bound for $u_{M_1},u_{M_3}$. Then we have
\begin{align*}
 \vert \Lambda_2 \vert \lesssim& \frac{a\left(M_1\right)}{M_2M_3^{3}} N_6^{-4s-2}\left(M_1^{-s}M_3^{-2s} N_4^{-s}N_6^{-2s} \right)\left(M_1^{\frac{1}{2}} M_3^{\frac{1}{2}}\right)\left(N_6^{-\frac{3}{2}} N_6^{-\frac{3}{2}} \right) \Vert u \Vert_{X^s}^4 \Vert u_{N_6} \Vert_{X^s}^2\\
 \lesssim& a\left(M_1\right) M_1^{-s+\frac{1}{2}}M_2^{-1}M_3^{-2s-\frac{5}{2}}N_4^{-s}N_6^{-6s-5} \Vert u \Vert_{X^s}^4 \Vert u_{N_6} \Vert_{X^s}^2.
 \end{align*} 
 
 Hence the summation with respect to $N,N_1,\dots,N_6$ is handled if $s\geq - \frac{5}{6}$.
 
 \textbf{Subcase 2.b.ii} $N_5\leq N_6 \approx M_3$. Then we use the bilinear estimates for $u_{M_1}u_{M_3}$ and $L^\infty$ bound for $u_{M_3}$ and the $L^6$ Strichartz estimate for the $u_{N_4},u_{N_5},u_{N_6}$. Then we have
 \begin{align*}
 \vert \Lambda_2 \vert \lesssim& \frac{a\left(M_1\right)}{M_2M_3^{3}} M_3^{-4s-2}\left(M_1^{-s}M_3^{-2s} N_4^{-s}N_5^{-s}M_3^{-s} \right)   M_3^{-\frac{3}{2}}M_3^{\frac{1}{2}}  \left(N_4^{-\frac{1}{3}} N_5^{-\frac{1}{3}} M_3^{-\frac{1}{3}} \right)                    
 \Vert u \Vert_{X^s}^4 \Vert u_{N_6} \Vert_{X^s}^2\\
 \lesssim & a\left(M_1\right)M_1^{-s}M_2^{-1}M_3^{-7s-\frac{19}{3}}N_4^{-s-\frac{1}{3}}N_5^{-s-\frac{1}{3}} \Vert u_{M_3} \Vert_{X^s} \Vert u_{M_3} \Vert_{X^s} \Vert u\Vert_{X^s}^4.
 \end{align*} 
 Hence the summation with respect to $N,N_1,\dots,N_6$ is handled if $s\geq - \frac{5}{6}$.
 
\textbf{Subcase 2.b.iii} $N_5\leq N_6 \ll M_3$.  In this case, we use the bilinear estimates for the products $u_{M_3}u_{N_5}$ and $u_{M_3}u_{N_6}$ and the $L^\infty$ bound for $u_{M_1},u_{N_4}$. Then we have 
\begin{align*}
 \vert \Lambda_2 \vert \lesssim& \frac{a\left(M_1\right)}{M_2M_3^{3}} M_3^{-4s-2}\left(M_1^{-s}M_3^{-2s} N_4^{-s}N_5^{-s}N_6^{-s} \right)     \left(M_1^{\frac{1}{2}} N_4^{\frac{1}{2}}\right)\left(M_3^{-\frac{3}{2}} M_3^{-\frac{3}{2}} \right) \Vert u \Vert_{X^s}^4 \Vert u_{M_3} \Vert_{X^s}^2 \\
 \lesssim & a\left(M_1\right)M_1^{-s+\frac{1}{2}}M_2^{-1}N_4^{-s+\frac{1}{2}}N_5^{-s}N_6^{-s}M_3^{-6s-8} \Vert u_{M_3} \Vert_{X^s} \Vert u_{M_3} \Vert_{X^s} \Vert u\Vert_{X^s}^4.
 \end{align*}
 Hence the summation with respect to $N,N_1,\dots,N_6$ is handled if $s\geq - \frac{17}{18}$.

\textbf{Subcase 2.c} $N=M_1\ll M_2$.    
In this case we have $N_4\lesssim N=M_1$.

\textbf{Subcase 2.c.i} $N_6\approx N_5\gg M_3$. We use the bilinear estimates for the products $u_{M_3}u_{N_5}$ and $u_{M_3}u_{N_6}$ and the $L^\infty$ bound for $u_{M_2},u_{N_4}$. Then we have
\begin{align*}
\vert \Lambda_2 \vert \lesssim& \frac{a\left(M_1\right)}{M_2M_3^{3}} N_6^{-4s-2}\left(M_2^{-s}M_3^{-2s} N_4^{-s}N_6^{-2s} \right)\left(M_2^{\frac{1}{2}} N_4^{\frac{1}{2}}\right)\left(N_6^{-\frac{3}{2}} N_6^{-\frac{3}{2}} \right) \Vert u \Vert_{X^s}^4 \Vert u_{N_6} \Vert_{X^s}^2\\
\lesssim& a\left(M_1\right)M_2^{-\frac{1}{2}-s}N_4^{\frac{1}{2}-s}M_3^{-2s-3}N_6^{-6s-5} \Vert u \Vert_{X^s}^4 \Vert u_{N_6} \Vert_{X^s}\Vert u_{N_6} \Vert_{X^s}.
\end{align*} 
Hence the summation with respect to $N,N_1,\dots,N_6$ is handled if $s\geq - \frac{5}{6}$.

\textbf{Subcase 2.b.ii} $N_5\leq N_6 \approx M_3$. Then we use the bilinear estimates for $u_{N_4}u_{M_3}$ and $L^\infty$ bound for $u_{M_2}$ and the $L^6$ Strichartz estimate for the $u_{M_3},u_{N_5},u_{N_6}$. Then we have
\begin{align*} 
\vert \Lambda_2 \vert \lesssim& \frac{a\left(M_1\right)}{M_2M_3^{3}} M_3^{-4s-2}\left(M_2^{-s}M_3^{-2s} N_4^{-s}N_5^{-s}M_3^{-s} \right)   M_3^{-\frac{3}{2}}M_2^{\frac{1}{2}}  \left(M_3^{-\frac{1}{3}} N_5^{-\frac{1}{3}} M_3^{-\frac{1}{3}} \right)                    
\Vert u \Vert_{X^s}^4 \Vert u_{N_6} \Vert_{X^s}^2\\
\lesssim & a\left(M_1\right)M_2^{-\frac{1}{2}-s} M_3^{-7s-\frac{43}{6}}N_4^{-s}N_5^{-s-\frac{1}{3}} \Vert u_{M_3} \Vert_{X^s} \Vert u_{M_3} \Vert_{X^s} \Vert u\Vert_{X^s}^4. 
\end{align*} 
Hence the summation with respect to $N,N_1,\dots,N_6$ is handled if $s\geq - \frac{8}{9}$.

\textbf{Subcase 2.b.iii} $N_5\leq N_6 \ll M_3$.  In this case, we use the bilinear estimates for the products $u_{M_3}u_{N_5}$ and $u_{M_3}u_{N_6}$ and the $L^\infty$ bound for $u_{M_2},u_{N_4}$. Then we have 
\begin{align*}
\vert \Lambda_2 \vert \lesssim& \frac{a\left(M_1\right)}{M_2M_3^{3}} M_3^{-4s-2}\left(M_2^{-s}M_3^{-2s} N_4^{-s}N_5^{-s}N_6^{-s} \right)     \left(M_2^{\frac{1}{2}} N_4^{\frac{1}{2}}\right)\left(M_3^{-\frac{3}{2}} M_3^{-\frac{3}{2}} \right) \Vert u \Vert_{X^s}^4 \Vert u_{M_3} \Vert_{X^s}^2 \\
\lesssim & a\left(M_1\right)M_2^{-s-\frac{1}{2}}N_4^{-s+\frac{1}{2}}N_5^{-s}N_6^{-s}M_3^{-6s-8} \Vert u_{M_3} \Vert_{X^s}^2 \Vert u\Vert_{X^s}^4.
\end{align*}
Hence the summation with respect to $N,N_1,\dots,N_6$ is handled if $s\geq - \frac{17}{18}$. 
\end{proof} 

To finish the proof of energy estimate, we need to choose suitable symbol $a\left(\xi\right)$ in the previous sections. As in \cite{KT2007}, we need the following sequence:
\begin{align*}
\beta_N^0=&\frac{N^{2s}\Vert u_{0,N} \Vert_{L^2}^2 }{\Vert u_0 \Vert_{H^s}^2 }\\
\beta_{N}=&\sum_{M}2^{-\frac{\epsilon}{2}\vert \log N -\log M \vert  }\beta_{M}^0. 
\end{align*} 
These $\beta_N$ satisfy the following property

(i)  $N^{2s} \Vert u_{0,N} \Vert_{L^2}^2 \lesssim \beta_N \Vert u_0 \Vert_{H^s}^2$,

(ii)  $\sum \beta_N \lesssim 1$, 
 
(iii) $\beta_N$ is slowly varying in the sense that
\begin{align*}
\vert \log_2 \beta_{N}-\log_2\beta_M \vert \lesssim \frac{\epsilon}{2} \vert \log_2N-\log_2 M \vert.
\end{align*}
We want to show that
\begin{align}\label{eqn:closed.}
\sup\limits_t N_0^s \Vert u_{N_0}\left(t\right) \Vert_{L^2} \lesssim \beta_{N_0}^{\frac{1}{2}}\left( \Vert u_0 \Vert_{H^s}+\Vert u \Vert_{X^s}^3 \right).
\end{align}
Then by using the property (ii) we can conclude Proposition \ref{prop:energy bound.}

To prove \eqref{eqn:closed.} for some frequency $N_0$ we choose 
\begin{align*}
a_N=N^{2s}\max\left\{1,\beta_{N_0}^{-1} 2^{-\epsilon\vert \log_2N-\log_2N_{0} \vert }  \right\} 
\end{align*}
Correspondingly we take a function $a\left(\xi\right) \in S_{\epsilon}^s$ so that 
\begin{align*}
a\left(\xi\right)\approx a_N, \quad \vert \xi \vert \approx N.
\end{align*}
Then from the slowly varying condition, we obtain
\begin{align*}
\sum\limits_N a_N \Vert u_{0,N} \Vert_{L_x^2}^2 \lesssim \sum\limits_N N^{2s} \Vert u_{0,N} \Vert_{L_x^2}^2+2^{-\epsilon \vert \log_2N-\log_2N_0 \vert  }N^{2s}\beta_{N_0}^{-1}\Vert u_{0,N} \Vert_{L_x^2}^2\lesssim \Vert u_0 \Vert_{H^s}^2.
\end{align*}   
From $\Vert u \Vert_{\ell_N^2 L_t^\infty H^s}\ll 1$, we have $\sup\limits_t E_0\left(u(t)\right)\ll 1$. Recall that
\begin{align*}
\frac{d}{dt}\left(E_0+E_1 \right)\left(u\right)= \Lambda_6\left(u(t)\right).
\end{align*}
From Proposition \ref{prop:E_1 energy} the contribution of $E_1$ to the energy is controlled by $E_0$. Also, we use the energy estimate in Proposition \ref{proposition: energy estimate 6 linear} for this choice of $a$. Therefore, we obtain
\begin{align*}
\left( \sum\limits_N a\left(N\right) \Vert u_{N}\left(t\right) \Vert_{L_x^2}^2 \right)^{\frac{1}{2}} \lesssim \Vert u_0 \Vert_{H^s}+\Vert u \Vert_{X^s}^3. 
\end{align*}
At fixed frequency $N=N_0$, we obtain $\eqref{eqn:closed.}$.  

\section{Proof of Theorem \ref{thm:main theorem}}\label{sec:main theorem} 
In this section we prove our main Theorem \ref{thm:main theorem}. The remaining part is just to do standard bootstrapping argument with trilinear estimate $\eqref{eqn:trilinear estimate}$ and energy estimate $\eqref{eqn:energy estimates N.}$. Our method is similar to the argument in Koch-Tataru \cite{KT2007}. Before we prove Theorem \ref{thm:main theorem}, we collect ingredients we need:
\begin{align*}
\text{Linear:}&\quad\quad &\Vert u \Vert_{X^s}\lesssim& \Vert u \Vert_{\ell^2L_t^\infty H_x^s}+\Vert \vert u \vert^2 u \Vert_{Y^s}\quad &\left(\ref{lem:nonlinear estimate}\right),\\
\text{Nonlinear:}&\quad\quad &\Vert \vert u \vert^2 u \Vert_{Y^s}\lesssim& \Vert u \Vert_{X^s}^3 \quad &\left(\ref{prop:trilinear estimate}\right),\\
\text{Energy:}&\quad\quad &\Vert u \Vert_{\ell^2L_t^\infty H_x^s}\lesssim& \Vert u_0\Vert_{H^s}+\Vert u \Vert_{X^s}^3\quad  &\left(\ref{prop:energy bound.}\right).
\end{align*} 
As we mentioned in Remark \ref{rem:small data remark}, by rescaling the problem we consider small initial data.
Let $\epsilon>0$ be a small constant and suppose $\Vert u_0 \Vert_{H^s\left(\mathbb{R}\right)}<\epsilon$. Take a small $\delta$ so that $\epsilon \ll \delta \ll 1$. We denote by $A$ the set
\begin{align*}
A=\left\{ T\in [0,1]; \Vert u\Vert_{\ell_N^2L_t^\infty H^s\left([0,T]\times\mathbb{R}\right)}\leq 2\delta, \; \Vert u \Vert_{X^s\left([0,T]\times \mathbb{R} \right)}\leq 2\delta      \right\}.
\end{align*}
We want to show that $A=\left[0,1\right]$. Clearly $A$ is not empty and $0\in A$. We need to prove that it is closed and open.
From the definition, the norms used in $A$ are continuous with respect to $T$ and hence $A$ is closed. 

Let $T \in A$. By using Proposition \ref{prop:energy bound.}, we have
\begin{align*}
\Vert u \Vert_{\ell_N^2L_t^\infty H^s\left([0,T]\times \mathbb{R} \right) }\lesssim \epsilon +\delta^3,
\end{align*}
and by Proposition \ref{lem:nonlinear estimate} and \ref{prop:trilinear estimate}, we have
\begin{align*}
\Vert u \Vert_{X^s\left([0,T]\times \mathbb{R} \right)}\lesssim \epsilon +\delta^3.
\end{align*}
Hence by taking $\epsilon$ and $\delta$ sufficiently small, we conclude that
\begin{align*}
\Vert u\Vert_{\ell_N^2 L_t^\infty H^s\left([0,T]\times \mathbb{R} \right)}\leq \delta, \quad \Vert u\Vert_{X^s\left([0,T]\times \mathbb{R} \right)}\leq \delta.
\end{align*}
Since the norms are continuous with respect to $T$, it follows that a neighborhood of $T$ is in $A$. Therefore $A=\left[0,1\right]$ and hence we prove Theorem \ref{thm:main theorem} .

\bibliographystyle{amsplain}
\bibliography{ddocument}

\end{document}